\title[ ]{Reducibility  of relativistic Schr\"odinger equation with unbounded perturbations}
\thanks{$^{\dag}$ :Corresponding author.}
\author{Yingte Sun}
\address[Y. Sun]{School of Mathematical Sciences,
Yangzhou University,
yangzhou,
China} \email{sunyt15@fudan.edu.cn}
\author{Jing Li$^{\dag}$}
\address[J. Li]{School of Mathematics and Statistics,
Shandong University,
Weihai,
China} \email{xlijing@sdu.edu.cn}
\keywords{ KAM theory, pseudo-differential operator, Sobolev norms }
\newcommand{\norm}[1]{\left\| #1 \right\|}
\newcommand{\abs}[1]{\left\lvert #1 \right\rvert}
\newcommand{\R}{\mathbb{R}}
\newcommand{\Z}{\mathbb{Z}}
\newcommand{\T}{\mathbb{T}}
\theoremstyle{plain}
\newtheorem{thm}{Theorem}[section]
 \newtheorem{cor}[thm]{Corollary}
 \newtheorem{lem}[thm]{Lemma}
 \newtheorem{prop}[thm]{Proposition}
 \theoremstyle{definition}
 \newtheorem{defn}[thm]{Definition}
 \theoremstyle{remark}
 \newtheorem{rem}[thm]{Remark}
 \numberwithin{equation}{section}
\begin{document}


\begin{abstract}
In this paper, we prove a reducibility result for a relativistic Schr\"odinger equation  on  torus with time quasi-periodic unbounded perturbations of order $1/2$. As far as we known, this is the first reducibility result for the relativistic Schr\"odinger equation.

\end{abstract}

\maketitle
\tableofcontents
\section{Introduction }

In this paper, we study  the reducibility of a relativistic Schr\"odinger equation with unbounded quasi-periodic perturbations on the torus $\mathbb{T}$,
\begin{equation}\label{eq1}
	\mathrm{i}\partial_{t}u=(-\partial_{xx}+\mathfrak{m}^2)^{\frac{1}{2}} u+\varepsilon \mathcal{W}(\omega t)u  \ , \quad x \in \T=\mathbb{R}/2\pi  \mathbb{Z},   \ \ t\in \R.
\end{equation}
The operator $(-\partial_{xx}+\mathfrak{m}^2)^{\frac{1}{2}}$, defined via its symbol $(\xi^2+\mathfrak{m}^2)^{\frac{1}{2}}$ under Fourier transform, is the kinetic energy operator of a relativistic particle of mass $\mathfrak{m}$, $0\leq \mathfrak{m} \leq \frac{1}{4}$. For more information about the  operator, we refer readers to \cite{Car1990}. The perturbation $\mathcal{W}(\omega t)$ is a pseudo-differential operator of order $\frac{1}{2}$, and quasi-periodic in time with frequencies $\omega\in \Omega=[1,2]^d$.
The goal of this paper is to find a bounded and time quasi-periodic transformation on $\mathrm{H}^r$ such that the original equation \eqref{eq1} can be transformeed into a block diagonal and time independent one.

In the context of linear PDEs, the reducibility theory concern the infinite dimensional systems  which are a diagonal operator under small perturbations of the form,
\begin{equation}\label{1.01}
\mathrm{i}\omega \cdot \partial_{\theta}+D+\varepsilon \mathcal{W}(\omega t), \quad \omega \in \mathbb{R}^d,
\end{equation}
where $D$ is a diagonal operator, $\varepsilon$ is small and $\omega$ is in some Cantor sets. In the past two decades, the reducibility problems of such systems have attracted lots of attentions and  can be divided into  two cases.  One is the diagonal operator with bounded perturbations, see \cite{L.M2019, B12, Ku93,Liang19, Wan08, W16}. The other is with unbounded perturbations, which is the focus of the present paper.

It is known that the reducibility procedure becomes more complicated in the case of unbounded perturbations. The first unbounded KAM reducibility result was obtained by Bambusi-Graffi \cite{Bam01}. Using Kuksin's lemma \cite{Ku97}, the authors were able to deal with the system \eqref{1.01}, where the  unperturbed part $D$ has order $n>1$ and the perturbation $\mathcal{W}(\omega t)$ is of order $ \delta < n-1$. The critical case $\delta =n-1$ was resolved by  Liu-Yuan \cite{Liu09}, which  greatly expanded the applications of classical unbounded KAM theorem. After that, the classical unbounded KAM theorem seems to have reached its limit.  The new breakthrough was obtained in \cite{Baldi2}. The authors  dealt with the system \eqref{1.01}, where the  unperturbed part $D$ has order $n=3$ and the perturbation $\mathcal{W}(\omega t)$ is also order $\delta=3$ . The  new strategy is to transform the original problem into the following new one
\begin{equation}
\mathrm{i} \omega \cdot \partial_{\theta}+D^++\varepsilon \mathcal{W}^+(\omega t), \quad \omega \in \mathbb{R}^d,
\end{equation}
using a series of bounded transformations before taking KAM iteration, where the new perturbation $\mathcal{W}^+$ is of low order. It is worth noting that the transformation  methods are completely different from different types of equation and perturbations.

In the past few years, people developed some sophisticated transformation techniques for different equations and unbounded perturbations. Bambusi et.al \cite{Bam18,Bam171,Bam19,Bam018} used the symbolic calculus of pseudo-differential operator to deal with harmonic oscillators under different unbounded perturbations. Feola-Greb\'ert \cite{F.G2019,F.G2020} studied the linear Schr\"odinger equation on Zoll manifolds  with different unbounded potentials. Using new coordinate transformation method, Bambusi-Langella-Montalto \cite{R.F2018}, Feola-Giuliani-Montalto-Procesi \cite{Bam019} gave a reducibility result for the linear transport equation under unbounded perturbations. Montolto \cite{Mo2019}, Sun et.al \cite{S19} studied the linear wave equation with some special unbounded perturbations. For more applications of these techniques in nonlinear PDEs, we refer readers to \cite{bal19,Baldi2,berti2,F.G20192,F.G20201,Mon2018}.

We also known the dimension of the space domains and the eigenvalues of the unperturbed part are also closely related to the reducibility process. They could induce new problems in estimating the number of non-resonance conditions. Interestingly, the strategy of reducing the order of perturbations is also a powerful tool to deal with this problem.  The idea is that the smoothing character of the perturbation can be used to recover a smoothness loss due to the small denominators. We refer readers to \cite{bal19,Bam019, F.G2020, Y2006,Mo2019}.

The main proof of this paper can be divided into two steps. At first, we take advantage of  the abstract pseudo-differential  operator technique in \cite{Bam17} to transform the original problem \eqref{eq1} into  a new problem \eqref{R1} such that the new perturbation  is sufficiently smooth. Then we apply the KAM technique  to obtain a reducibility result for  the equation \eqref{R1}. Comparing with the previous unbounded reducibility results, there are two troubles of the   eigenvalues
of the unperturbed part, i.e., the linear growth and the multiplicity of the eigenvalues, which
have seldom been dealt with before. The main difficulty is that
the eigenvalues become more sensitive to the unbounded perturbation when
they grow slowly at the linear rate. Furthermore, after the transformation in the first step,
the eigenvalues of the new unperturbed part contributes more resonances when
appearing multiple eigenvalues in the original unperturbed part.
For that reason, we take some reasonable restriction on the original perturbation (see Theorem \ref{THM1}), which can be
discarded if we take the length of the  torus as the extra parameters (see Theorem \ref{THM2}).
The idea is similar to that in \cite{R.F2018,Bam019} for the transport equation by regarding the constant vector field as the new parameters.
The main novelty of this paper is to further reveal the
relationship between the unperturbed part and the unbounded perturbation.
The results in this paper might be optimal for relativistic
Schrodinger equation on the torus if no  more assumptions of the perturbation is made.

\begin{rem}From the mathematical point of view, the reducibility result of equation \eqref{eq1} implies that the Sobolev norms of solutions stay bounded for all  time. In the context of non-small perturbations (without the small parameter $\varepsilon$), the dynamic behavior of  the solution of equation \eqref{eq1} is very rich. In \cite{Bam17, Mo20191}, the authors showed that if  $\omega$ satisfies some non-resonance conditions, then only a weak upper bound can be obtained, i.e., $\forall \epsilon \geq 0$, there exists a constant $C_{\epsilon}$ such that
\begin{equation}
\|u(t,x)\|_{_{\mathrm{H}^{}} } \leq C_{\epsilon}t^{\epsilon} \|u(0,x)\|_{_{\mathrm{H}^{r}} }.
\end{equation}
Furthermore, if $\omega$ is resonant, Maspero \cite{Mas18} constructed some perturbations which provoke polynomial growth of Sobolev norms. The conclusion in this paper is  supplement to the previous results. It further shows that stability of Sobolev norms is a non-resonant phenomenon.
\end{rem}

\begin{rem}In this paper, we use the abstract pdo (pseudo-differential operator) technique in \cite{Bam17} to regularize the perturbation, instead of the quantization technique in \cite{bal19}. The main advantage is that we can deal with much more general unbounded perturbations and even the high dimensional manifolds. Without taking much change, we can also deal with the following two models.

\textbf{1}: Relativistic Schr\"odinger equation on $\mathbb{S}^2$,
 \begin{equation}\label{1.4}
\mathrm{i}\partial_{t}u=\sqrt{-\Delta_{g}+\mathfrak{m}^2}u+\varepsilon [W(\omega t,x)(-\mathrm{i}\partial_{\phi})^{\frac{1}{2}}+V(\omega t ,x)]u, \quad u=u(t,x), \quad x\in \mathbb{S}^2.
\end{equation}
Here $\mathrm{i}\partial_{\phi}=\mathrm{i}(x_1\partial_{x_2}-x_2\partial_{x_1})$ is the $x_3$ component of the orbital angular momentum (and the generator of rotations about the $x_3$ axis).  Regarding more information about the perturbation, we refer readers to \cite{F.G2019}.

\textbf{2}: Relativistic Schr\"odinger equation on Zoll manifold of dimension $n \in \mathbb{N}$.
\begin{equation}\label{1.4}
\mathrm{i}\partial_{t}u=\sqrt{-\Delta_{g}+\mathfrak{m}^2}u+\varepsilon \mathcal{W}(\omega t)u, \quad u=u(t,x), \quad x\in \mathrm{M}^n.
\end{equation}
 Here $-\Delta_{g}$ is the positive Laplace-Beltrami operator on $\mathrm{M}^n$ and the linear operator  $\mathcal{W}(\omega t)$ is a time quasi-periodic pseudo-differential operator of order $0$ with frequency $\omega \in [1,2]^d$.

\end{rem}

The paper is organized as follows:
In section 2, we introduce some important notions  and definitions to precisely state our main results. In section 3, we introduce some infinite dimension matrix norm, such that the KAM process in section 5 can be well understood. In section 4, we introduce the abstract pseudo-differential operator (pdo) technique used in \cite{Bam019, Bam17}, such that  the original unbounded perturbation  can be reduced to a  smoothing operator. In section 5, we give a KAM reducibility result.
 In the Appendix A, we emphasize  the difference between relativistic Schr\"odinger equations on $\mathbb{T}$ and them on $\mathbb{T}_{\beta}$. In the Appendix B, we give some  important technical lemmas used in this paper.

Notations: In the present paper, we denote the notation $A\lesssim B$ as $A \leq CB$, where $C$ is a constant number depending on the fixed number $d,\mathfrak{m},s$.
\section{Main results}\label{sec:fun}
In order to state  the main results of the paper precisely, we introduce some  important notations and definitions in this section.
\subsection{Function space and pseudo-differential operators}\

Given any function $u \in L^2(\T)$, it can be expressed as
\begin{equation}
u(x)=\sum_{j\in \Z}\hat{u}(j)e^{\mathrm{i}j \cdot x }, \quad  \hat{u}(j)=\frac{1}{2\pi} \int_{\T} u(x)e^{-\mathrm{i} j \cdot x} dx.
\end{equation}

The Sobolev space on $\T$ is defined by
\begin{equation}
	\mathrm{H}^r(\T):=\Set{ u \in L^2(\T): |  \norm{u}_{\mathrm{H}^r(\T)}^2:= \sum_{j\in\Z}\braket{j}^{2r} \hat{u}(j)^2 <\infty  } ,
\end{equation}
where $\braket{j}=\max\{1,|j|\}$.

For a function $a:\T\times \Z \to \R$, define the difference operator $\Delta a(x,j) := a(x,j+1)-a(x,j)$ and let $\Delta^\beta=\Delta\circ...\circ\Delta$ be the composition  $\beta$ times of $\Delta$.
Then, we have the following definitions:
\begin{defn}(\cite{L.M2019}, Definition 2.1)\label{defn:symbols_semi}
	Let $m \in \mathbb{R}$, we say that a function $a:  \T \times \Z \to \R$ is a symbol of class $S^m$ if for any $j \in \Z$ the map  $ x\mapsto a( x, j)$ is smooth and for any
	$ \alpha, \beta \in \mathbb{N}$, there exists $C_{\alpha, \beta}>0$ such that
	$$
	\abs{\partial_x^\alpha \Delta^\beta a( x, j)} \leq C_{\alpha, \beta} \, \langle j \rangle^{m - \beta}  \ , \quad \forall x \in \T   \ .
	$$
\end{defn}
\begin{defn}(\cite{L.M2019}, Definition 2.2)\label{defn:symbols20}
	Given a symbol $a \in S^m$, we say that  $Op(a) \in OPS^m$  is the  associated pseudo-differential operator of $a$ if for any $u \in L^2(\T)$
\begin{equation}
Op(a)[u](x)=\sum_{j \in \Z} a(x, j) \hat{u}(j) e^{\mathrm{i} j \cdot x}.
\end{equation}
\end{defn}
We  endow the operator $Op(a)\in OPS^m$ a family of seminorms
$$\chi^m_\rho(Op(a)):=\sum_{\alpha+\beta \leq \rho} \sup_{x \in \mathbb{T},j \in \mathbb{Z}} \langle j\rangle^{-m+\beta}|\partial^{\alpha}_{x}\Delta^\beta a(x,j)|, \ \rho\in \mathbb{N}_0.$$

\begin{defn}\label{defn:symbols3}
Consider the  pseudo-differential operator $A(\theta)$ depending the angle variable $\theta \in \mathbb{T}^d$ in a smooth way. Then the operator $A(\theta)$ can be expressed as
\begin{equation}
A(\theta)=\sum_{\ell \in \mathbb{Z}^d}\hat{A}(\ell) e^{\mathrm{i}\ell \cdot \theta},
\end{equation}
where $\hat{A}(\ell) \in OPS^m$. We denote $A(\theta)$ by $C^{\infty}(\mathbb{T}^d,OPS^m)$. If the operator $A(\theta)$ is also Lipschitz-way depending on the parameter $\omega \in \Omega \subseteq \mathbb{R}^d$, we denote the set of all these $A(\theta,\omega)$  by $\mathcal{L}ip(\Omega,C^{\infty}(\mathbb{T}^d,OPS^m))$.
\end{defn}
\begin{rem}The symbol of the pseudo-differential operator $A(\theta)$ can be expressed as
\begin{equation}
a(\theta,x,j)=a(x,j)(\ell)e^{i\ell\cdot \theta},
\end{equation}
where $a(x,j)(\ell)$ is the symbol of the pseudo-differential operator $\hat{A}(\ell)$.
\end{rem}
\begin{defn}
Let $s >\frac{d}{2}$, the operator $A(\theta)\in C^{\infty}(\mathbb{T}^d,OPS^m)$ can be endowed a family of seminorms:
\begin{equation}
\chi^m_{\rho,s}(A(\theta)):=\Big( \sum_{\ell \in \mathbb{Z}^d}\langle \ell \rangle^{2s}(\chi^m_{\rho}(\hat{A}(\ell)))^2\Big)^{\frac{1}{2}}, \ \rho\in \mathbb{N}_0.
\end{equation}
Moreover, we can endow the operator $A(\theta,\omega) \in \mathcal{L}ip(\Omega,C^{\infty}(\mathbb{T}^d,OPS^m))$ a family of Lipschitz seminorms:
\begin{align}
\chi^{m,\mathcal{L}ip,\Omega}_{\rho,s}(A(\theta,\omega)):&=\chi^{m,\sup,\Omega}_{\rho,s}(A(\theta,\omega))+\chi^{m,lip,\Omega}_{\rho,s}(A(\theta,\omega))\\
&=\sup_{\omega \in \Omega}\chi^{m}_{\rho,s}(A(\theta,\omega))+\sup_{\omega_1,\omega_2\in \Omega}\frac{\chi^{m}_{\rho,s}( A(\omega_1)-A(\omega_2))}{|\omega_1-\omega_2|}.
\end{align}
\end{defn}

\subsection{Main results}\

The perturbation $\mathcal{W}(\omega t)$ is a quasi-periodic driving pseudo-differential operator, which satisfies the following two conditions:

\textbf{(C1)}:  $\mathcal{W}(\omega t)$ is an Hermitian operator, and belongs to $C^{\infty}(\mathbb{T}^d,OPS^{\frac{1}{2}})$.

\textbf{(C2)}: Set the symbol of  pseudo-differential operator $\mathcal{W}(\omega t)$ as $w(\theta,x,j)$, one has
$$ \int_{\mathbb{T}^d}\int_{\mathbb{T}}w(\theta,x,j)dxd\theta=a\langle j\rangle^{\frac{1}{2}}+b(j), \quad j \in \mathbb{Z},$$
where $a$ is independent of $j$ and $b$ is dependent on $j$.  Moreover, there exists an absolute constant $C$ such that
$$b(j) \leq C, \quad  \forall j \in \mathbb{Z}.$$

\begin{thm}\label{THM1} Consider the equation \eqref{eq1} and assume conditions \textbf{(C1)} and \textbf{(C2)}. For any $r\geq 0$, there exists $\varepsilon^* >0$, such that for any $0<\varepsilon< \varepsilon^*$, there
exists a closed asymptotically full Lebesgue  set $\Omega_{\varepsilon} \subseteq \Omega:=[1,2]^{d}$.  For each $ \omega \in \Omega_{\varepsilon}$, there exist a family of linear and invertible bounded operator $\mathcal{U}(\theta,\omega) \in \mathcal{L}(\mathrm{H}^r)$ conjugate the equation \eqref{eq1} to
\begin{equation}
\mathrm{i}\partial_{t}u=\mathbf{H}^{\infty}u, \quad \mathbf{H}^{\infty}=\mathrm{diag}\Big\{\Lambda^{\infty}_j(\omega) \big| \ j\in \mathbb{N}\Big\}.
\end{equation}
Here $\Lambda^{\infty}_j,j\geq 1$ is a $2 \times 2$   Hermitian matrix, and $\Lambda^{\infty}_0$ is a real number close to $\mathfrak{m}$.
\end{thm}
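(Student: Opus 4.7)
The plan is to execute the two-step program outlined in the introduction. The first step is a regularization that conjugates the original Hamiltonian $H(\omega t)=D_0+\varepsilon\mathcal{W}(\omega t)$ (with $D_0=(-\partial_{xx}+\mathfrak{m}^2)^{1/2}$) to an equation whose perturbation is of arbitrarily negative order; the second step is a KAM scheme with quadratic convergence on the resulting reduced equation. Only the first step uses (C1) and (C2); the KAM step sees only the smoothed perturbation.

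\textbf{Step 1: Regularization.} At each step $k$ I would seek a Hermitian generator $A_k\in C^\infty(\mathbb{T}^d,OPS^{m_k})$ with $m_k<0$ so that conjugation of the previous Hamiltonian by $e^{\mathrm{i}\varepsilon A_k}$ lowers the order of the perturbation by a positive amount. Using the pdo composition formula of Section~4, the transformed perturbation equals $\mathcal{W}_k-\mathrm{i}\omega\cdot\partial_\theta A_k-[\mathrm{i}D_0,A_k]$ modulo lower-order terms, and the obstruction to lowering its order is the average $\langle\mathcal{W}_k\rangle$ of its leading symbol over $\theta$ and $x$. Hypothesis (C2) is decisive at the first step: the obstruction $a\langle j\rangle^{1/2}+b(j)$ is a Fourier multiplier that can be absorbed into a constant-coefficient diagonal operator $D_1:=D_0+\varepsilon\,Op(a\langle j\rangle^{1/2}+b(j))$, and the bound on $b$ guarantees $D_1$ is a well-controlled modification of $D_0$; at later steps the obstructions have strictly lower order and are automatically bounded. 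Iterating finitely many times yields
\begin{equation}\label{plan-R1}
\mathrm{i}\partial_t v=(D_\infty+\varepsilon\mathcal{R}(\omega t))v,\qquad \mathcal{R}\in \mathcal{L}ip(\Omega,C^\infty(\mathbb{T}^d,OPS^{-N}))
\end{equation}
for any prescribed $N$, with $D_\infty$ block-diagonal in the basis $\{e^{\mathrm{i}jx},e^{-\mathrm{i}jx}\}_{j\ge 1}\cup\{1\}$; the $\pm j$ degeneracy is preserved because the average in (C2) depends on $|j|$ alone.

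\textbf{Step 2: KAM reducibility.} Starting from \eqref{plan-R1} I would run a standard quadratic KAM scheme. At step $\nu$ the homological equation $\mathrm{i}\omega\cdot\partial_\theta X_\nu+[D^\nu,X_\nu]=\mathcal{R}^\nu-\langle\mathcal{R}^\nu\rangle$ decomposes into $4\times 4$ linear systems indexed by $(\ell,j,k)$, solvable provided $\omega$ satisfies the second Melnikov condition
\begin{equation}
\bigl\|(\omega\cdot\ell\,\mathrm{I}+\Lambda_j^\nu\otimes\mathrm{I}-\mathrm{I}\otimes\Lambda_k^\nu)^{-1}\bigr\|\le\gamma^{-1}\langle\ell\rangle^\tau.
\end{equation}
Hermiticity (C1), preserved by the iteration, keeps each $\Lambda_j^\nu$ Hermitian and each $e^{\mathrm{i}X_\nu}$ unitary on $\mathrm{H}^r$; the smoothing nature of $\mathcal{R}^\nu$ makes $X_\nu$ bounded on $\mathrm{H}^r$, so the total composition $\mathcal{U}$ stays bounded and the scheme converges to the limit blocks $\Lambda_j^\infty$.

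\textbf{The principal obstacle} is the measure estimate for $\Omega_\varepsilon$. The unperturbed eigenvalues $\sqrt{j^2+\mathfrak{m}^2}$ grow only linearly and are doubly degenerate, so differences $\Lambda_j^\nu-\Lambda_k^\nu$ remain $O(1)$ along infinite families of $(j,k)$; naively the set of $\omega$ violating a Melnikov condition for some triple would have non-summable measure. The remedy is quantitative smoothness: since $\mathcal{R}^\nu\in OPS^{-N}$ for $N$ as large as one wishes, the bad-set measure for the triple $(\ell,j,k)$ carries a damping $\langle j\rangle^{-N}\langle k\rangle^{-N}$, and for $N$ chosen large enough summing over all triples yields a total excised measure $O(\gamma)$. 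Condition (C2) is needed here a second time: it ensures that the eigenvalue degeneracy $\Lambda_j^\infty=\Lambda_{-j}^\infty$ is preserved by the regularization, so one never has to invert a vanishing difference along an unavoidable resonant diagonal; without (C2) a residual $j$-dependent symbol could split this degeneracy into arbitrarily small gaps, destroying the measure estimate.
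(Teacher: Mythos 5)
Your two-step scheme (pdo regularization, then KAM) matches the paper's architecture, but your account of where and why condition \textbf{(C2)} enters is incorrect, and this is a genuine gap. The paper never needs, and never proves, the degeneracy $\Lambda_j^\infty=\Lambda_{-j}^\infty$: throughout the KAM iteration the $\pm j$ modes are grouped into a single $2\times 2$ Hermitian block $\Lambda_j$, whose two eigenvalues $\lambda_{j,1},\lambda_{j,2}$ are allowed to be distinct (and generically are); the second Melnikov condition \eqref{5.21} excludes precisely $(\ell,i,j)\neq(0,i,i)$, so the in-block gap $\lambda_{j,1}-\lambda_{j,2}$ is never inverted. What \textbf{(C2)} actually buys — see Lemma~\ref{z1} — is that the $\theta,x$-average of the order-$1/2$ symbol is $a\langle j\rangle^{1/2}+b(j)$ with $a$ \emph{independent of $j$} and $b$ bounded, so that after regularization the block eigenvalues have the form $\lambda_{j,k}=(j^2+\mathfrak{m}^2)^{1/2}+\varepsilon a\langle j\rangle^{1/2}+r_{j,k}$ with a single common sublinear shift. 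This is exactly what yields the separation hypothesis \textbf{(A1)}, $|\lambda_{i,v}-\lambda_{j,v'}|\geq c_0|i-j|$. Without the $j$-independence of $a$, a residual $j$-dependent order-$1/2$ correction could destroy the linear separation between distinct blocks, which is the obstruction \textbf{(C2)} is designed to remove — not a splitting within a block.

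Your description of the measure estimate also misattributes the mechanism. The measure of each resonant set $R_{\ell ijvv'}$ is $O\big(\alpha/(N_k^\tau\langle i\rangle^\sigma\langle j\rangle^\sigma)\big)$; the factors $\langle i\rangle^{-\sigma}\langle j\rangle^{-\sigma}$ come from the \emph{chosen form of the Melnikov divisor}, not from any decay of $\mathcal{R}^\nu$. The perturbation's smoothing is used elsewhere: it is the price one pays when solving the homological equation \eqref{4.1}, where the divisor costs $N_k^{\tau}\langle i\rangle^\sigma\langle j\rangle^\sigma$ and the answer must still land in the smoothing class $\mathcal{M}^s_{s-m,s+m}$; this is why $M$ regularization steps are taken so that $\sigma\leq m$ can be afforded (Lemma~\ref{5.111}). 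Finally, note that the regularization itself already requires excising a Cantor set $\Omega_{0,\alpha}$ (Lemma~\ref{4.3}), with divisors $\omega\cdot\ell+\mathrm{m}$, $\mathrm{m}\in\mathbb{Z}$, because the paper first splits $D_0=\mathcal{K}+\mathcal{Q}$ with $\mathcal{K}$ having integer spectrum and $\mathcal{Q}\in OPS^{-1}$ (Lemma~\ref{OPS}) and then commutes against $\mathcal{K}$ rather than against $D_0$ directly; your proposal, which uses $[\mathrm{i}D_0,A_k]$, would face divisors $\omega\cdot\ell+\sqrt{i^2+\mathfrak{m}^2}-\sqrt{j^2+\mathfrak{m}^2}$ that do not reduce to a torus and would need a separate small-divisor analysis.
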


As a consequence, we can get a Sobolev norms control of the flow generated by the equation \eqref{eq1}.

\begin{cor}\label{cor1}For any $r \geq 0$ and $\omega \in \Omega_{\varepsilon} $, the solution $u(t,x)$ of equation \eqref{eq1} with initial condition $u(0,x) \in \mathrm{H}^{r}$ satisfies
\begin{equation}
c_s\|u(0,x)\|_{\mathrm{H}^{r}} \leq \|u(t,x)\|_{\mathrm{H}^{r}} \leq C_{s} \|u(0,x)\|_{\mathrm{H}^{r}}.
\end{equation}
\end{cor}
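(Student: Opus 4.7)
The plan is to view Corollary \ref{cor1} as an essentially immediate consequence of Theorem \ref{THM1}. Fix $\omega \in \Omega_\varepsilon$ and let $\mathcal{U}(\theta,\omega) \in \mathcal{L}(\mathrm{H}^r)$ be the time quasi-periodic conjugation produced there. Given any initial datum $u(0,x) \in \mathrm{H}^r$, I would set $v(t,x) := \mathcal{U}(\omega t,\omega)^{-1} u(t,x)$, so that $v$ solves the reduced autonomous system
\begin{equation*}
\mathrm{i}\partial_t v = \mathbf{H}^\infty v, \qquad v(0,x) = \mathcal{U}(0,\omega)^{-1} u(0,x),
\end{equation*}
and the original solution is recovered as $u(t,x) = \mathcal{U}(\omega t,\omega)\, v(t,x)$.

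The next step is to verify that the reduced flow is an isometry of $\mathrm{H}^r$. Since $\mathbf{H}^\infty = \mathrm{diag}\{\Lambda^\infty_j(\omega) : j \in \mathbb{N}\}$ with each $\Lambda^\infty_j$ Hermitian (a $2\times 2$ block for $j \geq 1$, a real scalar for $j=0$), the propagator $e^{-\mathrm{i}t\mathbf{H}^\infty}$ splits as a direct sum of unitary finite-dimensional matrices, each acting inside a single Fourier block carrying the fixed weight $\langle j\rangle^{2r}$ of the $\mathrm{H}^r$ norm. Hence $\|v(t,\cdot)\|_{\mathrm{H}^r} = \|v(0,\cdot)\|_{\mathrm{H}^r}$ for every $t \in \mathbb{R}$.

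Finally I would transfer this invariance back to $u$ through the boundedness of $\mathcal{U}$ and $\mathcal{U}^{-1}$. From $u(t,\cdot) = \mathcal{U}(\omega t,\omega)\, v(t,\cdot)$ and the isometry above,
\begin{equation*}
\|u(t,\cdot)\|_{\mathrm{H}^r} \leq \|\mathcal{U}(\omega t,\omega)\|_{\mathcal{L}(\mathrm{H}^r)}\,\|\mathcal{U}(0,\omega)^{-1}\|_{\mathcal{L}(\mathrm{H}^r)}\,\|u(0,\cdot)\|_{\mathrm{H}^r},
\end{equation*}
and reversing the roles of $u(0)$ and $u(t)$ yields the matching lower bound. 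The constants in the statement can then be taken as
\begin{equation*}
C_s := \sup_{\theta\in\mathbb{T}^d}\|\mathcal{U}(\theta,\omega)\|_{\mathcal{L}(\mathrm{H}^r)}\,\sup_{\theta\in\mathbb{T}^d}\|\mathcal{U}(\theta,\omega)^{-1}\|_{\mathcal{L}(\mathrm{H}^r)},\qquad c_s := 1/C_s,
\end{equation*}
which are finite because $\mathcal{U}(\cdot,\omega)$ and its inverse depend continuously on $\theta$ in the compact manifold $\mathbb{T}^d$, by the smoothness inherited through the KAM construction. There is no genuine obstacle: the corollary is simply the repackaging of Theorem \ref{THM1} as a norm equivalence.
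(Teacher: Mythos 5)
Your proposal is correct and is exactly the standard way one derives such a norm-equivalence corollary from a reducibility theorem; the paper itself does not write out a proof, treating it as immediate. Two small remarks: the isometry of the reduced flow is clean precisely because each block $\Lambda^\infty_j$ acts inside $E_j=\mathrm{span}\{e^{\mathrm{i}jx},e^{-\mathrm{i}jx}\}$, on which the $\mathrm{H}^r$ norm is just $\langle j\rangle^r$ times the $\ell^2$ norm, so Hermiticity gives unitarity there; and the uniform-in-$\theta$ bounds on $\mathcal{U}^{\pm1}$ that you invoke via ``continuity on the compact $\mathbb{T}^d$'' are in fact established explicitly in the paper's proof of Theorem \ref{THM1}, in the displays \eqref{4.11}--\eqref{4.12}, so one can cite those directly rather than re-argue compactness.
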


 We emphasized that the condition \textbf{(C2)} is indispensable. Inspired by \cite{s19}, the author obtained a family of analytical solutions of elliptic equation  by taking the length of space torus as frequency parameters.  We can also introduce the length of space torus as frequency parameters to discard the condition \textbf{(C2)}.
Hence, we  consider the  following equation:
\begin{equation}\label{eq2}
	\mathrm{i}\partial_{t}u=(-\partial_{xx}+\mathfrak{m}^2)^{\frac{1}{2}} u+\varepsilon \mathcal{W}(\omega t)u  \ , \quad x \in \T_{\beta}=\mathbb{R}/2\pi \beta \mathbb{Z},   \ \ t\in \R \ ,
\end{equation}
where $\mathcal{W}(\omega t)$ is a pseudo-differential operator of order $\frac{1}{2}$, and quasi-periodic in time with frequencies $\omega \in [1,2]^d$. The space domain changes with the parameter $\beta \in [\frac{1}{2},1]$.

Then, we can prove the following reducibility result.
\begin{thm}\label{THM2} Let $\mathcal{W}(\omega t) $ be an Hermitian operator and belongs to $ C^{\infty}(\mathbb{T}^d_{\beta},OPS^{\frac{1}{2}})$.  For any $r\geq 0$, there exist $\varepsilon^* >0$, such that for any $0<\varepsilon< \varepsilon^*$, there
exists a closed asymptotically full Lebesgue  set $\tilde{\Omega}_{\varepsilon} \subseteq \tilde{\Omega}:=[1,2]^{d+1}$.  For each $ \tilde{\omega }:=(\omega,\frac{1}{\beta})\in \tilde{\Omega}_{\varepsilon}$, there exist a family of time quasi-periodic and invertible bounded operator $\mathcal{U}(\theta,\tilde{\omega}) \in \mathcal{L}(\mathrm{H}^r)$ conjugate the equation \eqref{eq2} to
\begin{equation}
\mathrm{i}\partial_{t}u=\mathbf{H}^{\infty}u, \quad \mathbf{H}^{\infty}=\mathrm{diag}\Big\{ \Lambda^{\infty}_j(\tilde{\omega})\big| \ j\in \mathbb{ N}\Big\}.
\end{equation}
 Here $\Lambda^{\infty}_j,j\geq 1$ is a $2 \times 2$   Hermitian matrix, and $\Lambda^{\infty}_0$ is a real number close to $\mathfrak{m}$.
\end{thm}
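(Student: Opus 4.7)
The plan is to follow the same two-step strategy (pdo regularization, then KAM) used for Theorem \ref{THM1}, but to exploit the extra parameter $1/\beta$ to circumvent the role previously played by condition \textbf{(C2)}. I would first rescale $x \mapsto \beta x$ so that the equation is posed on the fixed torus $\mathbb{T}$, at the cost of turning the symbol $(\xi^2+\mathfrak{m}^2)^{1/2}$ into $(\beta^{-2}\xi^2+\mathfrak{m}^2)^{1/2}$; this makes the eigenvalues $\lambda_j(\beta)=\sqrt{(j/\beta)^2+\mathfrak{m}^2}$ manifestly $\beta$-dependent, while $\mathcal{W}(\omega t)$ remains a pseudo-differential operator of order $1/2$ (with $\beta$-dependent symbol, still in the class of Section 4).

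Next I would apply the abstract pdo normal-form procedure of Section 4 exactly as for Theorem \ref{THM1}: by a sequence of bounded conjugations one regularizes $\mathcal{W}$ down to a $-N$-order smoothing perturbation, producing along the way a diagonal correction whose leading piece is of the form $\varepsilon a(\omega,\beta)\langle j\rangle^{1/2}+\varepsilon b_j(\omega,\beta)$ with $b_j$ uniformly bounded in $j$. In Theorem \ref{THM1} one needs $a\equiv 0$ (i.e.\ condition \textbf{(C2)}) to keep this diagonal term from destroying the asymptotics of the normal form; here instead I would absorb $\varepsilon a\langle j\rangle^{1/2}$ into the unperturbed part. The new eigenvalues
\[
\mu_j(\tilde\omega,\varepsilon)=\sqrt{(j/\beta)^2+\mathfrak{m}^2}+\varepsilon a(\omega,\beta)\langle j\rangle^{1/2}+\varepsilon b_j(\omega,\beta)
\]
are no longer close to those of $\sqrt{-\partial_{xx}+\mathfrak{m}^2}$, but they retain the $2\times 2$ block structure coming from the $j\leftrightarrow -j$ multiplicity, and, crucially, a non-degenerate dependence on $\tilde\omega=(\omega,1/\beta)$.

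The KAM step from Section 5 can then be run with parameters $\tilde\omega\in[1,2]^{d+1}$. The only thing that changes in that argument is the proof of the second Melnikov conditions of the form
\[
|\tilde\omega\cdot k+\mu_j(\tilde\omega)\pm\mu_{j'}(\tilde\omega)|\gtrsim \gamma\langle k\rangle^{-\tau},
\]
which for Theorem \ref{THM1} were verified by a direct inspection of the numbers $\sqrt{j^2+\mathfrak{m}^2}$ together with \textbf{(C2)}. Here I would replace that inspection with a transversality/Rüssmann-type argument: compute $\partial_{1/\beta}\mu_j=-j^2/(\beta\lambda_j)+O(\varepsilon)\to -|j|$ as $|j|\to\infty$, so $\partial_{1/\beta}(\mu_j\pm\mu_{j'})$ is of order $|j|\pm|j'|+O(\varepsilon)$; combined with the $k$-derivatives in $\omega$ this lets one excise a measure $O(\gamma)$ set of bad $\tilde\omega$'s and conclude the Cantor set $\tilde\Omega_\varepsilon$ has asymptotically full measure as $\varepsilon\to 0$. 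The resulting block-diagonal limit gives the statement of the theorem, with Corollary \ref{cor1} following automatically.

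The main obstacle is the second Melnikov step: the eigenvalue gaps $\mu_j-\mu_{j'}$ with $|j|\approx |j'|$ behave like $O(1)$ and are therefore dangerously sensitive to the $\varepsilon$-correction $\varepsilon a\langle j\rangle^{1/2}$, which is exactly where \textbf{(C2)} was indispensable in Theorem \ref{THM1}. The $\beta$-parameter has to do all of the work here, and the quantitative transversality $|\partial_{1/\beta}(\mu_j-\mu_{j'})|\gtrsim 1$ on the pairs $(j,j')$ producing small denominators is the estimate I expect to cost the most effort to make uniform in $\varepsilon$ and $k$.
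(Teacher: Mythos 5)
Your high-level plan matches the paper's: rescale (or equivalently, as the paper does, keep the torus $\T_\beta$ and work on the basis $e^{\mathrm{i}jx/\beta}$), regularize the order-$\tfrac{1}{2}$ perturbation with the pdo machinery of Section~4, and then run the KAM scheme over the extended parameter $\tilde\omega=(\omega,v)$, $v=1/\beta\in[1,2]$, so that the extra parameter supplies the transversality that condition~\textbf{(C2)} supplied in Theorem~\ref{THM1}. Where you and the paper part ways is the measure estimate, and that is where your argument is incomplete.

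The paper does not differentiate $\sqrt{(j/\beta)^2+\mathfrak{m}^2}$ term by term. It rewrites the unperturbed part as $v\cdot\mathcal{K}+\mathcal{Q}$ from the start, extracting the exact linear piece, so that after regularization the block eigenvalues have the structural form $\mu^k_{j,n}=v\,j+z(j,\mathfrak{m},v)+p^k_{j,n}(\tilde\omega)$ with the \emph{remainder} Lipschitz bounds $|z|^{\mathcal{L}ip}\leq\tfrac14$ and $|p^k_{j,n}|^{\mathrm{lip}}\leq C(\epsilon+\varepsilon)$ uniform in $j$. The key function $g(\tilde\omega)=\omega\cdot\ell+\mu^k_{i,n}-\mu^k_{j,n'}=\tilde\omega\cdot(\ell,i-j)+\cdots$ then has slope $\geq|(\ell,i-j)|\geq1$ along $(\ell,i-j)/|(\ell,i-j)|$, minus the small remainder Lipschitz constants, giving the excision bound immediately and uniformly. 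Your derivative-based Rüssmann argument is aiming at the same conclusion but has to re-derive, asymptotically, what the paper builds in structurally; and in the process you flag the crucial estimate without closing it.

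Two concrete problems. First, a sign error: with $v=1/\beta$ and $\mu_j=\sqrt{j^2v^2+\mathfrak{m}^2}$ one has $\partial_v\mu_j=j^2v/\sqrt{j^2v^2+\mathfrak{m}^2}\to+|j|$, not $-|j|$; you appear to have differentiated with respect to $\beta$ rather than $1/\beta$. The magnitude is what matters, but the slip signals that the algebra hasn't been carried through. Second, and more seriously, you correctly identify that the diagonal correction $\varepsilon a(\omega,\beta)\langle j\rangle^{1/2}$ would contribute a $v$-derivative of size $\varepsilon\langle j\rangle^{1/2}$, which for $|j-j'|$ small and $j$ large would swamp the leading transversality $\sim|j-j'|$; but you leave this as ``the estimate I expect to cost the most effort'' rather than resolving it. The resolution the paper uses (implicitly, via Definition~\ref{defn:same symbol} and Lemma~\ref{7.5}, and explicitly in the claimed bound $|p^k_{j,n}|^{\mathrm{lip}}\leq C(\epsilon+\varepsilon)$) is that the symbol of $\mathcal{W}$ on $\T_\beta$ is taken in a fixed equivalence class across $\beta$, i.e.\ the Fourier-side coefficients $w_{0,0}(j)$ do not vary with $v$; then $\varepsilon a\langle j\rangle^{1/2}$ has zero $v$-variation and only the lower-order terms (which come from the $v$-dependent homological equations) contribute the $O(\varepsilon)$ Lipschitz remainder. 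Without making that hypothesis explicit your transversality step does not close, and that is the genuine gap in the proposal.
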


\begin{rem}The proof of Theorem \ref{THM2} is essentially the same to that of Theorem \ref{THM1}. The main differences are explained in detail in Appendix A.
\end{rem}
\section{Matrix representation of linear operator }\label{sec:block}\

Let $\mathrm{H}^\infty:=\cap_{r\in\R}\mathrm{H}^r$ and $\mathrm{H}^{-\infty}:=\cup_{r\in\R}\mathrm{H}^r$.
For any linear operator $A\colon \mathrm{H}^\infty \to \mathrm{H}^{-\infty}$ , we  take its matrix representation of block coefficients $(A_{[m]}^{[n]})_{m,n \in \mathbb{N}} $  as
\begin{equation}\label{3.11}
A_{[m]}^{[n]}=\left(
\begin{array}{cc}  
A_m^n& A_m^{-n} \\
A_{-m}^n & A_{-m}^{-n}
\end{array}
\right )
\end{equation}
on the basis $(\hat{e}_j:=e^{\mathrm{i}jx})_{j \in \mathbb{Z}}$,  defined for $m,n\in \mathbb{Z}$. Here, $A_m^n$ is defined by
$$
 A_m^n \equiv  \langle A  \hat{e}_m , \hat{e}_n \rangle_{\mathcal{H}^0}  \ .
$$
The matrix $ A_{[m]}^{[n]}$ can be seen as a liner operator in $\mathcal{L}(E_m,E_n)$ for any $m.n \in \mathbb{N}$, where $E_m$ is defined as
\begin{equation}
E_m:=\mathrm{span}\{e^{\mathrm{i}mx},e^{-\mathrm{i}mx}\}.
\end{equation}

In this paper we also consider the $\theta$-depending linear operator
$$\mathbb{T}^d\ni \theta\mapsto A:=A(\theta)=\sum_{\ell \in \mathbb{Z}^d}\hat{A}(\ell)e^{\mathrm{i}\ell \cdot  \theta},$$
where $\hat{A}(\ell) \in \mathcal{L}(\mathrm{H}^\infty, \mathrm{H}^{-\infty})$ . Then $A(\theta)$ can be regarded as an operator acting on function $u(\theta,x)$ of space-time as
$$(Au)(\theta,x)=(A(\theta)u(\theta,\cdot))(x).$$

Having the infinite dimensional matrix $A$ and $A(\theta)$, we can define the following $s$-decay norms.
\begin{defn}\label{Def1}(s-decay norm)

\textbf{I:}
The $s$-decay norms of infinite dimensional matrix $ A$  is  defined by
\begin{equation}
\|A\|_{s,s}=\Big(\sum_{h \in \mathbb{N}}\langle h\rangle^{2s} \sup_{|i-j|=h} \|A_{[j]}^{[i]}\|^2\Big)
^{\frac{1}{2}},
\end{equation}
where  $\|A_{[j]}^{[i]}\|$ is the $\mathcal{L}^2$ operator norm of $\mathcal{L}(E_j,E_i)$.\

\textbf{II:}
Considering a $\theta$ -depending  infinite dimensional matrix $ A(\theta)$, we define its norms as
\begin{equation}
\| A(\theta)\|^{s}_{s,s}=\Big(\sum_{\ell \in \mathbb{Z}^d,h \in \mathbb{N}}\langle \ell,h \rangle^{2s} \sup_{|i-j|=h} \|\hat{A}_{[j]}^{[i]}(\ell)\|^2\Big)^{\frac{1}{2}},
\end{equation}
where $\langle \ell,h \rangle=\max\{|\ell|,|h|,1\}$. We denote by $\mathcal{M}^{s}$ the space of matrices  with finite $s$-decay norm.\

\textbf{III:} If the linear operator $A(\theta)$ is a family Lipschitz map  from $\R^{d}\supseteq\Omega\ni \omega $ to $\mathcal{M}^{s}$, we define the Lipschitz $s$-decay norm as
\begin{align}\label{6.1}
\| A(\theta)\|^{s,\mathcal{L}ip,\Omega}_{s,s}=\sup_{\omega \in \Omega}\| A(\omega)\|^{s}_{s,s}+\sup_{\omega_1,\omega_2\in \Omega}\frac{\| A(\omega_1)-A(\omega_2)\|^{s}_{s,s}}{|\omega_1-\omega_2|}.
\end{align}
We denote by $\mathcal{M}^{s,\mathcal{L}ip,\Omega}$ the family Lipschitz map  from $\R^{d}\supseteq\Omega\ni \omega $ to $\mathcal{M}^{s}$ with finite Lipschitz $s$-decay norm. For notionally convenience, drop the range of $\omega$, $\mathcal{M}^{s,\mathcal{L}ip,\Omega}$ denoted as $\mathcal{M}^{s,\mathcal{L}ip}$.\
\end{defn}

\begin{rem}\label{3.20} In the present paper, we claim that the $\theta$-depending linear operator $A(\theta)$ is an Hermitian operator, if and only if
\begin{align}
A=A^*&\Leftrightarrow \hat{A}(\ell)^*=\hat{A}(-\ell), \ \forall \ell \in \mathbb{Z}^d \Leftrightarrow (\hat{A}^{[m]}_{[n]}(-\ell))^*=\hat{A}^{[n]}_{[m]}(\ell), \  \forall \ell \in \mathbb{Z}^d,m,n \in \mathbb{N}.
\end{align}

\end{rem}
It is crucial to investigate the tame  or algebra property  of $s$-decay norm. Thus, we need the following Lemmas.
\begin{lem}(\cite{berti}, Lemma 2.6, Lemma 2.7 and Lemma 2.8)\label{2.1}\

\textbf{A:} If $\mathfrak{s} \geq \mathfrak{s}_0>\frac{1}{2}$,
there is a constant $C(\mathfrak{s} )$  such that
\begin{equation}
\|Au\|_{\mathrm{H}^\mathfrak{s}} \leq C(\mathfrak{s} ) (\|A\|_{\mathfrak{s},\mathfrak{s}}\|u\|_{\mathrm{H}^{\mathfrak{s}_0}}+ \|A\|_{\mathfrak{s}_0,\mathfrak{s}_0}\|u\|_{\mathrm{H}^\mathfrak{s}}).
\end{equation}

For any $s\geq \mathrm{s}_0>\frac{d+1}{2}$, the following results hold:

\textbf{B:} there is a constant $C(s)$ such that
\begin{equation}\label{3.3}
\| AB(\theta)\|^{s}_{s,s} \leq C(s)( \| A\|^{\mathrm{s}_0}_{\mathrm{s}_0,\mathrm{s}_0}\| B\|^{s}_{s,s}+\| A\|^{s}_{s,s}\| B\|^{\mathrm{s}_0}_{\mathrm{s}_0,\mathrm{s}_0}).
\end{equation}

\textbf{C:} given an infinite dimension matrix $A(\theta)$ , for any $N \in \mathbb{N}$, we define the cutoff matrix $\Pi_NA$ as
$$
(\Pi_NA)^{[i]}_{[j]}(\ell)=\begin{cases}
\hat{A}^{[i]}_{[j]}(\ell), \quad if \ |i-j|< N \ and \  |\ell| < N,\\
0, \quad \quad \ \  \ \ otherwise.\\
\end{cases}$$
Denote $\Pi^\perp_{N}A$ as $A-\Pi_{N}A$, we have
\begin{equation}\label{3.4}
\|\Pi^\perp_{N}A\|^{s}_{s,s} \leq CN^{-\beta}\|A\|^{s+\beta}_{s+\beta,s+\beta},
\end{equation}
\begin{equation}\label{3.5}
\|\Pi_{N}A\|^{s}_{s,s}, \ \|\Pi^\perp_{N}A\|^{s}_{s,s} \leq \|A\|^{s}_{s,s}.
\end{equation}
The bounds of \eqref{3.3},\eqref{3.4},\eqref{3.5} are valid by replacing $\|\cdot\|^{s}_{s,s}$ by $\|\cdot\|^{s,\mathcal{L}ip}_{s,s}$.
\end{lem}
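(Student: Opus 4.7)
The three items are standard tame/algebra estimates in the matrix-decay framework, all reducing to a single mechanism: apply the triangle-like bound $\langle i\rangle^s \lesssim \langle i-j\rangle^s + \langle j\rangle^s$ (and its $(d+1)$-dimensional analogue $\langle \ell, i-j\rangle^s \lesssim \langle \ell_1, i-k\rangle^s + \langle \ell_2, k-j\rangle^s$ whenever $\ell_1+\ell_2=\ell$) to split the weight, then apply Cauchy--Schwarz together with summability of $\sum_{h}\langle h\rangle^{-2s_0}$ (resp.\ $\sum_{\ell,h}\langle\ell,h\rangle^{-2s_0}$), which is precisely what $s_0>1/2$ (resp.\ $s_0>(d+1)/2$) guarantees.

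For part \textbf{A}, I would expand $\|Au\|^2_{\mathrm{H}^s} = \sum_{i\in\mathbb{N}}\langle i\rangle^{2s}\|(Au)_{[i]}\|^2$ in blocks, write $(Au)_{[i]} = \sum_{[j]} A^{[i]}_{[j]}u_{[j]}$, apply the triangle bound to $\langle i\rangle^s$ and split $\langle i\rangle^s\|(Au)_{[i]}\|$ into two pieces $T_1(i)+T_2(i)$ carrying the weights $\langle i-j\rangle^s$ and $\langle j\rangle^s$ respectively. On $T_2$, Cauchy--Schwarz in $j$ against the dummy weight $\langle i-j\rangle^{-2s_0}$ (summable because $s_0>1/2$) gives
\[
T_2(i)^2 \lesssim \sum_{j} \langle i-j\rangle^{2s_0}\|A^{[i]}_{[j]}\|^2 \langle j\rangle^{2s}\|u_{[j]}\|^2,
\]
and summing over $i$, together with the trivial bound $\|A^{[i]}_{[j]}\| \leq \sup_{|i'-j'|=|i-j|}\|A^{[i']}_{[j']}\|$, yields $\|A\|_{s_0,s_0}^2 \|u\|_{\mathrm{H}^s}^2$. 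The symmetric computation on $T_1$ gives the $\|A\|_{s,s}\|u\|_{\mathrm{H}^{s_0}}$ term.

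Part \textbf{B} is the same argument applied to the block--Fourier expansion of the product, $(AB)^{[i]}_{[j]}(\ell) = \sum_{[k]}\sum_{\ell_1+\ell_2=\ell}\hat A^{[i]}_{[k]}(\ell_1)\hat B^{[k]}_{[j]}(\ell_2)$, with the weight $\langle\ell, i-j\rangle^s$ split via the $(d+1)$-dimensional triangle inequality. Cauchy--Schwarz against the summable weight $\langle\ell_1,i-k\rangle^{-2s_0}$ or $\langle\ell_2,k-j\rangle^{-2s_0}$, legal because $s_0>(d+1)/2$, distributes one factor at the $s_0$ level and the other at the $s$ level and yields \eqref{3.3}. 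Part \textbf{C} is then immediate from the definition: on the support of $\Pi_N^\perp$ one has $\langle\ell,i-j\rangle\geq N$, so $\langle\ell,i-j\rangle^{2s}\leq N^{-2\beta}\langle\ell,i-j\rangle^{2(s+\beta)}$ yields \eqref{3.4}, while \eqref{3.5} holds because $\Pi_N,\Pi_N^\perp$ are $\{0,1\}$ cutoffs. The Lipschitz extensions follow by applying each bound pointwise in $\omega$ and, for (B), to the telescoped difference $A(\omega_1)B(\omega_1)-A(\omega_2)B(\omega_2) = (A(\omega_1)-A(\omega_2))B(\omega_1)+A(\omega_2)(B(\omega_1)-B(\omega_2))$ before dividing by $|\omega_1-\omega_2|$.

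The only real obstacle is bookkeeping: keeping straight the block indices $i,j\in\mathbb{N}$ versus the underlying Fourier basis indexed by $\mathbb{Z}$, not confusing the spatial decay parameter $h=|i-j|$ with the time--frequency parameter $\ell\in\mathbb{Z}^d$, and always choosing the split so that the side of Cauchy--Schwarz which must be summed absolutely carries the $\langle\cdot\rangle^{-2s_0}$ weight at the larger exponent. These estimates are the matrix analogue of the classical Moser-type inequality $\|uv\|_{\mathrm{H}^s}\lesssim \|u\|_{\mathrm{H}^s}\|v\|_{\mathrm{H}^{s_0}}+\|u\|_{\mathrm{H}^{s_0}}\|v\|_{\mathrm{H}^s}$, and are completely standard in the Berti--Bolle style KAM framework; no new idea beyond this template is required.
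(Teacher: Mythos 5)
The paper does not prove Lemma \ref{2.1}; it simply cites \cite{berti} (Lemmas 2.6--2.8) and moves on. Your sketch correctly reproduces the standard argument from the Berti--Corsi--Procesi framework: the block-convolution reformulation, the weight-splitting triangle inequality in the relevant $1$- or $(d+1)$-dimensional index, Cauchy--Schwarz against the $\langle\cdot\rangle^{-2\mathrm{s}_0}$ dummy weight made summable by $\mathrm{s}_0>\frac{1}{2}$ (resp.\ $\mathrm{s}_0>\frac{d+1}{2}$), and the passage from the raw block sum to the $\sup_{|i-j|=h}$ form of the norm (using that $i\mapsto|i-j|$ is at most two-to-one). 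The cutoff estimate \textbf{C} and the telescoped Lipschitz extension are also handled correctly. Since there is no proof in the paper to compare against, no further comparison is possible; your argument is a correct, self-contained derivation of the cited result.
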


\begin{lem}\label{2.101}(\cite{Baldi2}, Lemma 2.4) Let $\mathrm{s}_0>\frac{d+1}{2}$, one has
\begin{equation}
\|A(\theta)\|_{s,s} \leq C(s)\|A(\theta)\|^{s+\mathrm{s}_0}_{s+\mathrm{s}_0,s+\mathrm{s}_0}.
\end{equation}
Here $\|A(\theta)\|_{s,s}=\Big(\sum_{h \in \mathbb{N}}\langle h\rangle^{2s} \sup \limits_{|i-j|=h \atop \theta \in \mathbb{T}^d } \|A_{[j]}^{[i]}(\theta)\|^2\Big)^{\frac{1}{2}}$.
\end{lem}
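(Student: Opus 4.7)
The plan is to use the Fourier expansion in $\theta$ and absorb the resulting $\ell$-sum into the $(s+\mathrm{s}_0)$-decay norm via Cauchy--Schwarz against the summable weight $\langle \ell\rangle^{-2\mathrm{s}_0}$.

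First I would write the Fourier expansion $A(\theta)=\sum_{\ell\in\mathbb{Z}^d}\hat{A}(\ell)e^{\mathrm{i}\ell\cdot\theta}$ block by block, so that
\begin{equation}
A_{[j]}^{[i]}(\theta)=\sum_{\ell\in\mathbb{Z}^d}\hat{A}_{[j]}^{[i]}(\ell)e^{\mathrm{i}\ell\cdot\theta}.
\end{equation}
Taking the operator norm on $\mathcal{L}(E_j,E_i)$ and using the triangle inequality followed by Cauchy--Schwarz with the weight $\langle\ell\rangle^{-2\mathrm{s}_0}$,
\begin{equation}
\sup_{\theta\in\mathbb{T}^d}\|A_{[j]}^{[i]}(\theta)\|\ \leq\ \sum_{\ell}\|\hat{A}_{[j]}^{[i]}(\ell)\|\ \leq\ \Bigl(\sum_{\ell}\langle\ell\rangle^{-2\mathrm{s}_0}\Bigr)^{1/2}\Bigl(\sum_{\ell}\langle\ell\rangle^{2\mathrm{s}_0}\|\hat{A}_{[j]}^{[i]}(\ell)\|^2\Bigr)^{1/2}.
\end{equation}
The first factor is finite because $\mathrm{s}_0>\frac{d+1}{2}>\frac{d}{2}$. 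Squaring gives $\sup_{\theta}\|A_{[j]}^{[i]}(\theta)\|^2\leq C(\mathrm{s}_0)\sum_{\ell}\langle\ell\rangle^{2\mathrm{s}_0}\|\hat{A}_{[j]}^{[i]}(\ell)\|^2$, uniformly in $i,j$.

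Next I would plug this bound into the definition of $\|A(\theta)\|_{s,s}$ and exchange supremum and sum using the nonnegativity of each term, $\sup_{|i-j|=h}\sum_{\ell}(\cdots)\leq\sum_{\ell}\sup_{|i-j|=h}(\cdots)$:
\begin{equation}
\|A(\theta)\|_{s,s}^2\ \leq\ C(\mathrm{s}_0)\sum_{h\in\mathbb{N}}\sum_{\ell\in\mathbb{Z}^d}\langle h\rangle^{2s}\langle\ell\rangle^{2\mathrm{s}_0}\sup_{|i-j|=h}\|\hat{A}_{[j]}^{[i]}(\ell)\|^2.
\end{equation}
Finally, since $\langle h\rangle,\langle\ell\rangle\leq\langle \ell,h\rangle$, we have $\langle h\rangle^{2s}\langle\ell\rangle^{2\mathrm{s}_0}\leq\langle\ell,h\rangle^{2(s+\mathrm{s}_0)}$, so the right-hand side is at most $C(s)\,\|A(\theta)\|^{\,(s+\mathrm{s}_0)\,2}_{s+\mathrm{s}_0,s+\mathrm{s}_0}$, which is the desired inequality.

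There is essentially no hard step here: the proof is a combination of Fourier inversion, Cauchy--Schwarz to trade $L^\infty_\theta$ for $\ell^2_\ell$-with-weights, and the elementary inequality $\langle h\rangle^a\langle\ell\rangle^b\leq\langle\ell,h\rangle^{a+b}$ for $a,b\geq 0$. The only subtlety worth flagging is that the hypothesis $\mathrm{s}_0>\frac{d+1}{2}$ is stronger than what is strictly needed for this lemma (one only uses $\mathrm{s}_0>d/2$ to guarantee summability of $\langle\ell\rangle^{-2\mathrm{s}_0}$); the stronger bound is presumably chosen for consistency with the tame estimates in Lemma 2.1 that will be combined with this lemma later in the paper.
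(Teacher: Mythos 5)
Your proof is correct: the paper itself gives no proof of this lemma (it simply cites \cite{Baldi2}, Lemma 2.4), and the argument you give — block-wise Fourier inversion, triangle inequality, Cauchy--Schwarz against the summable weight $\langle\ell\rangle^{-2\mathrm{s}_0}$, interchange of $\sup$ and $\sum$, and the elementary bound $\langle h\rangle^{2s}\langle\ell\rangle^{2\mathrm{s}_0}\leq\langle\ell,h\rangle^{2(s+\mathrm{s}_0)}$ — is exactly the standard Sobolev-embedding-in-$\theta$ proof one expects behind that citation. Your observation that only $\mathrm{s}_0>d/2$ is actually used here (the hypothesis $\mathrm{s}_0>\frac{d+1}{2}$ being inherited from the tame estimates of Lemma \ref{2.1}) is accurate.
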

\begin{rem}\label{2.102}From Lemma \ref{2.1}, we see that a  linear operator $\mathrm{A}\colon \mathrm{H}^\infty \to \mathrm{H}^{-\infty}$ with finite $\mathfrak{s}$-decay norms ($\mathfrak{s}>\frac{1}{2}$) is a bounded operator from $\mathrm{H}^{\mathfrak{s}}$ to $\mathrm{H}^{\mathfrak{s}}$. Actually,  the linear operator $\mathrm{A}$ can be extended to  a  bounded operator from $\mathrm{H}^{r}$ to $\mathrm{H}^{r}$ with $r \in [0,\mathfrak{s}]$. From tame estimate in Lemma 6.1 \cite{Baldi2}, one can get  quantitative bounds $\|A\|_{\mathcal{L}(\mathrm{H}^r)}\leq C_{r,\mathfrak{s}}\|A\|_{\mathfrak{s},\mathfrak{s}}$.
\end{rem}
In the KAM procedure of section 4, the smoothing operator plays an important role. Hence, we introduce the following norms.
\begin{defn}\label{3.40}
Considering a  time quasi-periodic linear operator $A(\theta)$, we introduce a new $s$-decay norm as
\begin{equation}
\| A(\theta)\|^{s}_{s+m,s+n}=\Big(\sum_{\ell \in \mathbb{Z}^d,h \in \mathbb{N}}\langle \ell,h\rangle^{2s}\sup_{|i-j|=h} \langle i \rangle^{2n}\|\hat{A}_{[j]}^{[i]}(\ell)\|^2\langle j \rangle^{-2m}\Big)^{\frac{1}{2}}.
\end{equation}
We denote $\mathcal{M}^{s}_{s+m,s+n}$ as the  space of  matrices with  finite $s$-decay norm.
Moreover, if the linear operator $A(\theta)$ is a family of  Lipschitz map  from $\R^{d}\supseteq\Omega\ni \omega $ to $\mathcal{M}^{s}_{s+m,s+n}$, we  can define the Lipschitz $s$-decay norm in the same way as Definition \ref{Def1}, III.
\end{defn}

\begin{rem}\label{2.3}
Define a $\theta$-independent diagonal operator $D$, acting on $u \in \mathrm{H}^{0}$ as
$$D u(x)=\sum_{k\in\Z}\langle k \rangle\hat{u}_ke^{\mathrm{i}kx}.$$
For any $m,n \in \mathbb{R} $, $A(\theta) \in \mathcal{M}^{s}_{s+m,s+n}$, there exists a linear operator $Q(\theta) \in \mathcal{M}^{s}_{s,s}$ such that $\hat{Q}^{[i]}_{[j]}(\ell)=\frac{\hat{A}^{[i]}_{[j]}(\ell)\langle i\rangle^n}{\langle j\rangle^m}$. Moreover, one can obtain
$$\|A(\theta)\|^{s}_{s+m,s+n}=\|\langle D \rangle ^{-n}Q(\theta) \langle D \rangle ^{m}\|^{s}_{s+m,s+n}=\|Q(\theta)\|^{s}_{s,s}.$$
\end{rem}
\begin{lem}\label{3.7}
Fix $s\geq \mathrm{s}_0> \frac{d+1}{2}$. For any linear operator $A \in \mathcal{M}^{s}_{s+m,s+l}$  and $B \in  \mathcal{M}^{s}_{s+l,s+n}$, there exists a constant $C:=C(s)$ such that
\begin{equation}
\|AB\|^{s}_{s+m,s+n} \leq C(s)\Big(\|A\|^{s}_{s+m,s+l} \|B\|^{\mathrm{s}_0}_{\mathrm{s}_0+l,\mathrm{s}_0+n}+\|A\|^{\mathrm{s}_0}_{\mathrm{s}_0+m,\mathrm{s}_0+l} \|B\|^{s}_{s+l,s+n}\Big).
\end{equation}
The assertion holds true by replacing $\|\cdot\|^{s}_{s+m,s+n}$ by $\|\cdot\|^{s,\mathcal{L}ip}_{s+m,s+n}$.
\end{lem}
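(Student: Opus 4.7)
The plan is to reduce Lemma \ref{3.7} to the standard tame algebra property Lemma \ref{2.1}(B) by means of the conjugation identity in Remark \ref{2.3}. Define
\[
Q_A := \langle D \rangle^l A \langle D \rangle^{-m}, \quad Q_B := \langle D \rangle^n B \langle D \rangle^{-l}, \quad Q_{AB} := \langle D \rangle^n (AB) \langle D \rangle^{-m},
\]
so that Remark \ref{2.3} yields the isometries $\|A\|^s_{s+m,s+l}=\|Q_A\|^s_{s,s}$, $\|B\|^s_{s+l,s+n}=\|Q_B\|^s_{s,s}$ and $\|AB\|^s_{s+m,s+n}=\|Q_{AB}\|^s_{s,s}$. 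A straightforward computation with powers of $\langle D\rangle$ produces the factorization
\[
Q_{AB} = \bigl(\langle D \rangle^{n-l} Q_A \langle D \rangle^{l-n}\bigr)\bigl(\langle D \rangle^{m-l} Q_B \langle D \rangle^{l-m}\bigr) =: X \cdot Y,
\]
in which $X$ and $Y$ are conjugates of $Q_A$ and $Q_B$ by suitable powers of $\langle D\rangle$.

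I would then apply Lemma \ref{2.1}(B) to the product $XY$, giving
\[
\|Q_{AB}\|^s_{s,s} \leq C(s)\bigl(\|X\|^s_{s,s}\,\|Y\|^{\mathrm{s}_0}_{\mathrm{s}_0,\mathrm{s}_0}+\|X\|^{\mathrm{s}_0}_{\mathrm{s}_0,\mathrm{s}_0}\,\|Y\|^s_{s,s}\bigr),
\]
and separately relate $\|X\|^s_{s,s}$ to $\|Q_A\|^s_{s,s}$ and $\|Y\|^s_{s,s}$ to $\|Q_B\|^s_{s,s}$. Since the block entries satisfy
\[
\|X^{[i]}_{[j]}(\ell)\|=\bigl(\langle i\rangle/\langle j\rangle\bigr)^{n-l}\|Q_A^{[i]}_{[j]}(\ell)\|\leq 2^{|n-l|}\langle i-j\rangle^{|n-l|}\|Q_A^{[i]}_{[j]}(\ell)\|,
\]
using the elementary Peetre-type inequality $\langle i\rangle\leq 2\langle j\rangle\langle i-j\rangle$ valid for either sign of the exponent, the additional polynomial factor $\langle i-j\rangle^{|n-l|}$ is absorbed into the weight $\langle\ell,i-j\rangle^{2s}$ of the $s$-decay norm. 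An analogous estimate with exponent $|m-l|$ controls $Y$. The resulting constants, which depend on the fixed orders $m,l,n$, are absorbed into $C(s)$.

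The principal technical obstacle is the absorption step above: the conjugation by $\langle D\rangle^\alpha$ introduces the unbounded ratio $(\langle i\rangle/\langle j\rangle)^\alpha$, and one must verify that the resulting polynomial loss $\langle i-j\rangle^{|\alpha|}$ in the off-diagonal decay is still summable after being multiplied by $\langle\ell,i-j\rangle^{-2s}$. This is possible precisely because $s\geq\mathrm{s}_0>(d+1)/2$ provides sufficient decay margin, and because the orders $m,l,n$ are fixed constants, so the polynomial loss is subsumed into $C(s)$. The Lipschitz variant $\|\cdot\|^{s,\mathcal{L}ip}_{s+m,s+n}$ then follows by applying the same argument to both the supremum and the Lipschitz difference-quotient components of Definition \ref{Def1} III, using the key fact that the conjugating operator $\langle D\rangle^\alpha$ is independent of $\omega$ and thus commutes with the Lipschitz difference quotient.
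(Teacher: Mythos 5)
Your identity $Q_{AB}=XY$ is algebraically correct, but the proof breaks at the "absorption" step, and that gap is genuine. Conjugating $Q_A$ by $\langle D\rangle^{n-l}$ multiplies each block entry by $(\langle i\rangle/\langle j\rangle)^{n-l}$; Peetre does give $(\langle i\rangle/\langle j\rangle)^{n-l}\leq 2^{|n-l|}\langle i-j\rangle^{|n-l|}$, but this is a loss, not something that the weight $\langle\ell,i-j\rangle^{2s}$ can soak up. In the $s$-decay norm the weight \emph{multiplies} the block size, so the extra polynomial increases the norm: inserting the Peetre bound into Definition~\ref{3.40} gives at best
\begin{equation*}
\|X\|^{s}_{s,s}\leq C\,\|Q_A\|^{\,s+|n-l|}_{\,s+|n-l|,\,s+|n-l|},
\end{equation*}
a higher-regularity decay norm, not $C\|Q_A\|^{s}_{s,s}$, and likewise for $Y$. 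The hypothesis $s\geq\mathrm{s}_0>(d+1)/2$ gives no margin to spend: it is exactly the summability needed for Lemma~\ref{2.1}(B) and is already used up there. Your Lipschitz remark about $\langle D\rangle^{\alpha}$ being $\omega$-independent is correct but does not repair this. One can also see the problem directly from $X=\langle D\rangle^{n}A\langle D\rangle^{\,l-m-n}$ and $Y=\langle D\rangle^{\,m-l+n}B\langle D\rangle^{-m}$: by Remark~\ref{2.3}, $\|X\|^{s}_{s,s}=\|A\|^{s}_{s+(m+n-l),\,s+n}$ and $\|Y\|^{s}_{s,s}=\|B\|^{s}_{s+m,\,s+(m+n-l)}$, neither of which is a norm appearing on the right of Lemma~\ref{3.7}, so your factorization cannot land on the stated bound.

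The route the paper's one-line proof actually points to needs no Peetre step: split the conjugating weights so that the intermediate powers of $\langle D\rangle$ cancel,
\begin{equation*}
\langle D\rangle^{n}(AB)\langle D\rangle^{-m}=\bigl(\langle D\rangle^{n}A\langle D\rangle^{-l}\bigr)\bigl(\langle D\rangle^{l}B\langle D\rangle^{-m}\bigr),
\end{equation*}
and apply Lemma~\ref{2.1}(B). Translating back via Remark~\ref{2.3} yields
\begin{equation*}
\|AB\|^{s}_{s+m,s+n}\leq C(s)\Bigl(\|A\|^{s}_{s+l,s+n}\,\|B\|^{\mathrm{s}_0}_{\mathrm{s}_0+m,\mathrm{s}_0+l}+\|A\|^{\mathrm{s}_0}_{\mathrm{s}_0+l,\mathrm{s}_0+n}\,\|B\|^{s}_{s+m,s+l}\Bigr),
\end{equation*}
with the two subscripts in each factor transposed relative to the displayed statement of Lemma~\ref{3.7}. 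This is the version whose indices are compatible with the composition structure and with all the uses in Lemma~\ref{5.112}; a test case (take $A^{[i]}_{[j]}=\langle i\rangle^{-1}\langle i-j\rangle^{-s'}$, $B^{[i]}_{[j]}=\langle j\rangle\langle i-j\rangle^{-s'}$ with $s+\tfrac12<s'\leq s+\tfrac32$, $m=n=0$, $l=1$) shows the displayed version is not true as written. So the obstruction you ran into is not a technicality to be patched but a sign the target statement itself has its subscripts reversed; the direct factorization above proves the correct version in one line.
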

\begin{proof} These bounds can be obtained  from  Lemma \ref{2.1} and Remark \ref{2.3}.
\end{proof}

\begin{lem} \label{2.4}Assume that $\mathrm{s}_0 >\frac{d+1}{2}$ and  $C(s)\|A\|^{\mathrm{s}_0,\mathcal{L}ip}_{\mathrm{s}_0+m,\mathrm{s}_0+m} \leq \frac{1}{2}$ for some $m \in \mathbb{R}$ and large $C(s)>0$ depending on $s \geq \mathrm{s}_0$, then the map $\Phi:=\mathrm{Id}+\Psi$ defined as  $\Phi=e^{\mathrm{i}A}=\sum_{p \geq 0}\frac{1}{p!}(\mathrm{i}A)^{p}$ satisfies
\begin{equation}\label{3.8}
\|\Psi\|^{s,\mathcal{L}ip}_{s+m,s+m} \leq C \|A\|^{s,\mathcal{L}ip}_{s+m,s+m},
\end{equation}
where $C$ is a constant depending on $s,d,m$.
\end{lem}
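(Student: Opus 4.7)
The plan is to expand $\Psi$ as the convergent power series $\Psi = \sum_{p \geq 1} \frac{(\mathrm{i}A)^p}{p!}$ and bound each term via the tame multiplication estimate in Lemma \ref{3.7}. The smallness assumption on the low-regularity seminorm will make the series geometrically convergent, so that $\Psi$ inherits the same order decay as $A$ itself.

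First I would set $L := \|A\|^{s,\mathcal{L}ip}_{s+m,s+m}$ and $K := \|A\|^{\mathrm{s}_0,\mathcal{L}ip}_{\mathrm{s}_0+m,\mathrm{s}_0+m}$. Applying Lemma \ref{3.7} with $l = m$, $n = m$ (so that $\mathcal{M}^{s}_{s+m,s+m}$ is closed under composition), one gets
\begin{equation*}
\|AB\|^{s,\mathcal{L}ip}_{s+m,s+m} \leq C(s)\bigl(\|A\|^{s,\mathcal{L}ip}_{s+m,s+m}\|B\|^{\mathrm{s}_0,\mathcal{L}ip}_{\mathrm{s}_0+m,\mathrm{s}_0+m} + \|A\|^{\mathrm{s}_0,\mathcal{L}ip}_{\mathrm{s}_0+m,\mathrm{s}_0+m}\|B\|^{s,\mathcal{L}ip}_{s+m,s+m}\bigr).
\end{equation*}
By induction on $p$ I would show
\begin{equation*}
\|A^p\|^{s,\mathcal{L}ip}_{s+m,s+m} \leq p\,(2C(s))^{p-1}\,L\,K^{p-1},
\end{equation*}
the inductive step being a direct application of the displayed tame bound to $A^p = A \cdot A^{p-1}$, combined with the cruder bound $\|A^{p-1}\|^{\mathrm{s}_0,\mathcal{L}ip}_{\mathrm{s}_0+m,\mathrm{s}_0+m} \leq (2C(s))^{p-2}K^{p-1}$ (obtained by the same recursion applied at the index $\mathrm{s}_0$).

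Summing the power series, the bound on $\|A^p\|^{s,\mathcal{L}ip}_{s+m,s+m}$ yields
\begin{equation*}
\|\Psi\|^{s,\mathcal{L}ip}_{s+m,s+m} \leq \sum_{p \geq 1} \frac{1}{p!}\,\|A^p\|^{s,\mathcal{L}ip}_{s+m,s+m} \leq L \sum_{p \geq 1} \frac{(2C(s)K)^{p-1}}{(p-1)!} = L\,e^{2C(s)K}.
\end{equation*}
The hypothesis $C(s)K \leq \tfrac{1}{2}$ then gives $e^{2C(s)K} \leq e$, producing the desired estimate \eqref{3.8} with a universal constant $C$.

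The only subtlety is the Lipschitz component of the norm: the difference $A(\omega_1)^p - A(\omega_2)^p$ must be handled by the telescoping identity $\sum_{k=0}^{p-1}A(\omega_1)^k\bigl(A(\omega_1) - A(\omega_2)\bigr)A(\omega_2)^{p-1-k}$, but since Lemma \ref{3.7} is already stated for the Lipschitz seminorm, this is absorbed transparently into the same inductive bookkeeping above. The main (and essentially only) obstacle is tracking the constants $C(s)$ through the recursion so that the factor $p$ appears with the correct exponent of $(2C(s))^{p-1}$; once that combinatorial factor is in place, the smallness assumption does the rest.
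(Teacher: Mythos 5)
Your proposal is correct and follows essentially the same route as the paper: expand $\Psi = \sum_{p\geq 1}\frac{(\mathrm{i}A)^p}{p!}$, use the tame composition estimate of Lemma \ref{3.7} (with all index shifts equal to $m$) to bound $\|A^p\|^{s,\mathcal{L}ip}_{s+m,s+m}$ by $p\,C(s)^{p-1}\,L\,K^{p-1}$ up to constants, then sum the exponential series and invoke the smallness of $C(s)K$. Your slightly more careful induction with the factor $(2C(s))^{p-1}$ and the explicit $e^{2C(s)K}$ bound is a cleaner bookkeeping of the same argument.
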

\begin{proof} From Lemma \ref{3.7}, for some $C(s)\geq 0$,
\begin{equation}
\|A^n\|^{s,\mathcal{L}ip}_{s+m,s+m} \leq n[C(s)\|A\|^{\mathrm{s}_0,\mathcal{L}ip}_{\mathrm{s}_0+m,\mathrm{s}_0+m}]^{n-1}C(s)\|A\|^{s,\mathcal{L}ip}_{s+m,s+m}.
\end{equation}
Hence,
\begin{equation}
\|\Psi\|^{s,\mathcal{L}ip}_{s+m,s+m} \leq \|A\|^{s,\mathcal{L}ip}_{s+m,s+m} \sum_{p \geq 1}\frac{C(s)^p}{p!} (\|A\|^{\mathrm{s}_0,\mathcal{L}ip}_{\mathrm{s}_0+m,\mathrm{s}_0+m})^{p-1}
\end{equation}
for some large $C(s)>0$. The bounds  \eqref{3.8} can be obtained from the small condition of  $C(s)\|A\|^{\mathrm{s}_0,\mathcal{L}ip}_{\mathrm{s}_0+m,\mathrm{s}_0+m}$.
\end{proof}

\section{Reduction of the order of perturbations}
The main goal of this section is to conjugate the original problem \eqref{eq1} to a new one, which the new perturbation is a sufficiently smoothing  operator. By direct calculation,
the equation \eqref{eq1} can be  rewrited as
\begin{equation}
\mathrm{i}\partial_{t}u=  \mathcal{K} u+ \mathcal{Q}u+\varepsilon \mathcal{W}(\omega t)[u] ,
\end{equation}
where $\mathcal{K}=(-\partial_{xx})^{\frac{1}{2}}$, $\mathcal{K}e^{\mathrm{i}jx}=|j|e^{\mathrm{i}jx},\forall j \in \mathbb{Z}$. We remark that $\mathcal{Q}$ is a  pseudo-differential operator of order $-1$ and  give a simple proof in Lemma \ref{OPS}. Moreover, we know that
$$\mathcal{Q}e^{\mathrm{i}jx}=\frac{c(\mathfrak{m},|j|)}{\langle j \rangle }e^{\mathrm{i}j\cdot x},$$
 where $c(\mathfrak{m},|j|)$ depends on $\mathfrak{m},j$ and $c(\mathfrak{m},|j|) \leq \mathfrak{m}^2$.

\begin{lem}
Given a linear operator $\mathcal{Z}:\mathrm{H}^{\infty}\mapsto \mathrm{H}^{-\infty}$, if $[\mathcal{Z}, \mathcal{K}]=0$,  the block matrix representation of $\mathcal{Z}$ satisfies
\begin{equation*}\label{Z-1}
\mathcal{Z}^{[i]}_{[j]}=0,   \quad \forall i\neq j.
\end{equation*}
\end{lem}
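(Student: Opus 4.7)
The plan is to compute the commutator at the level of individual Fourier matrix elements and then reassemble them into blocks. First I would test the identity $[\mathcal{Z},\mathcal{K}]=0$ against the basis vectors $\hat{e}_j = e^{\mathrm{i}jx}$ by forming the scalar $\langle \mathcal{Z}\mathcal{K}\hat{e}_m - \mathcal{K}\mathcal{Z}\hat{e}_m,\,\hat{e}_n\rangle_{\mathcal{H}^0}$ for arbitrary $m,n\in\mathbb{Z}$. Since $\mathcal{K}\hat{e}_j = |j|\hat{e}_j$ and $\mathcal{K}$ is self-adjoint on the orthonormal Fourier basis, one side yields $|m|\,\mathcal{Z}^{n}_{m}$ and the other yields $|n|\,\mathcal{Z}^{n}_{m}$, so the commutator condition forces
\begin{equation*}
(|m|-|n|)\,\mathcal{Z}^{n}_{m}=0 \qquad \forall\, m,n\in\mathbb{Z}.
\end{equation*}

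Next I would translate this scalar conclusion into the block language of \eqref{3.11}. Given $i,j\in\mathbb{N}$ with $i\neq j$, every entry of the $2\times 2$ matrix $\mathcal{Z}^{[i]}_{[j]}$ is of the form $\mathcal{Z}^{n}_{m}$ with $m\in\{j,-j\}$ and $n\in\{i,-i\}$, so $|m|=j\neq i=|n|$. The displayed identity then forces each entry to be zero, giving $\mathcal{Z}^{[i]}_{[j]}=0$. The edge case $j=0$ (or $i=0$) is handled identically, noting that the corresponding block degenerates to a scalar or a $1\times 2$ strip, still indexed by integers of absolute value $0$ or $i$, so the same dichotomy $|m|\neq|n|$ applies.

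The argument is really just bookkeeping: the only subtlety is making sure the blockwise indexing in \eqref{3.11} is consistent with the scalar indexing that naturally arises from the spectral decomposition of $\mathcal{K}$. Since $\mathcal{K}$ acts as the scalar $m$ on the two-dimensional eigenspace $E_m$ precisely because $|m|=|-m|$, the blocks $E_m$ are exactly the joint eigenspaces of $\mathcal{K}$, and a linear operator commuting with $\mathcal{K}$ must preserve this eigenspace decomposition. There is no genuine obstacle here; the lemma is a restatement of the elementary principle that operators commuting with a diagonal operator are block-diagonal along its spectral projectors.
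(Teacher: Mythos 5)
Your proof is correct and follows essentially the same route as the paper: both exploit that $\mathcal{K}$ is diagonal with eigenvalue $|j|$ on $\hat{e}_j$, compute the commutator on matrix elements, and deduce $(|m|-|n|)\mathcal{Z}^n_m=0$, hence the off-diagonal blocks vanish. The paper states this directly at the block level as $(i-j)\mathcal{Z}^{[i]}_{[j]}=0$, while you work at the scalar level and reassemble, but the argument is identical in substance.
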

\begin{proof}
From $[\mathcal{Z}, \mathcal{K}]=0$, for any $i,j \in \mathbb{N}$, one gets that
\begin{equation}\label{Z1}
\mathcal{Z}^{[i]}_{[j]}(i-j)=0.
\end{equation}
Hence, for any $i \neq j$, \eqref{Z1} implies that
\begin{equation*}
\mathcal{Z}^{[i]}_{[j]}=0.
\end{equation*}

\end{proof}

\begin{lem}
Given a pseudo-differential operator $\mathcal{B} \in OPS^{\eta}$, the corrseponding linear operator $e^{\mathrm{i}\kappa\cdot \mathcal{K}} \mathcal{B} e^{-\mathrm{i}\kappa\cdot \mathcal{K}}$ is $2\pi$ periodic to $\kappa$.
\end{lem}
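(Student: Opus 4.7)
The plan is to exploit the fact that $\mathcal{K}=(-\partial_{xx})^{1/2}$ has integer spectrum, so the flow $e^{\mathrm{i}\kappa\cdot\mathcal{K}}$ is $2\pi$-periodic in $\kappa$ as an operator.

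First I would work on the basis $(\hat{e}_j=e^{\mathrm{i}jx})_{j\in\Z}$, on which $\mathcal{K}\hat{e}_j=|j|\hat{e}_j$ with $|j|\in\mathbb{N}_0$. Defining $e^{\mathrm{i}\kappa\cdot\mathcal{K}}$ via its diagonal Fourier symbol, one has $e^{\mathrm{i}\kappa\cdot\mathcal{K}}\hat{e}_j=e^{\mathrm{i}\kappa|j|}\hat{e}_j$, so that $e^{\mathrm{i}(\kappa+2\pi)\cdot\mathcal{K}}\hat{e}_j=e^{\mathrm{i}(\kappa+2\pi)|j|}\hat{e}_j=e^{\mathrm{i}\kappa|j|}\hat{e}_j$ because $|j|\in\Z$. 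Hence, as operators on $\mathrm{H}^r$, $e^{\mathrm{i}(\kappa+2\pi)\cdot\mathcal{K}}=e^{\mathrm{i}\kappa\cdot\mathcal{K}}$; equivalently, $e^{\mathrm{i} 2\pi\cdot\mathcal{K}}=\mathrm{Id}$.

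Next I would apply this identity on both sides of $\mathcal{B}\in OPS^\eta$. Since $\mathcal{K}$ commutes with itself, the family $\{e^{\mathrm{i}\kappa\cdot\mathcal{K}}\}_{\kappa\in\R}$ is a one-parameter group, so
\begin{equation*}
e^{\mathrm{i}(\kappa+2\pi)\cdot\mathcal{K}}\,\mathcal{B}\,e^{-\mathrm{i}(\kappa+2\pi)\cdot\mathcal{K}}
=e^{\mathrm{i}\kappa\cdot\mathcal{K}}\bigl(e^{\mathrm{i} 2\pi\cdot\mathcal{K}}\mathcal{B}\,e^{-\mathrm{i} 2\pi\cdot\mathcal{K}}\bigr)e^{-\mathrm{i}\kappa\cdot\mathcal{K}}
=e^{\mathrm{i}\kappa\cdot\mathcal{K}}\,\mathcal{B}\,e^{-\mathrm{i}\kappa\cdot\mathcal{K}},
\end{equation*}
which is exactly the claim.

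There is essentially no obstacle; the only points that warrant care are (i) that $e^{\mathrm{i}\kappa\cdot\mathcal{K}}$ really is well defined as a bounded unitary on $\mathrm{H}^r$ (immediate from the diagonal action, since $|e^{\mathrm{i}\kappa|j|}|=1$), and (ii) that the composition $e^{\mathrm{i}\kappa\cdot\mathcal{K}}\mathcal{B}\,e^{-\mathrm{i}\kappa\cdot\mathcal{K}}$ makes sense as a map $\mathrm{H}^\infty\to\mathrm{H}^{-\infty}$, which follows from $\mathcal{B}\in OPS^\eta$ and the fact that $e^{\pm\mathrm{i}\kappa\cdot\mathcal{K}}$ preserves every Sobolev space. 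The integer-spectrum property of $\mathcal{K}$ is the only substantive ingredient, and it is specific to the flat torus; this is exactly the reason for considering $\mathcal{K}=(-\partial_{xx})^{1/2}$ rather than $(-\partial_{xx}+\mathfrak{m}^2)^{1/2}$ in the conjugation scheme.
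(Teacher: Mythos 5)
Your argument is correct and is essentially the same as the paper's: both rest on the single observation that $\mathcal{K}=(-\partial_{xx})^{1/2}$ has integer spectrum, so $e^{\mathrm{i}(\kappa+2\pi)\cdot\mathcal{K}}=e^{\mathrm{i}\kappa\cdot\mathcal{K}}$, and hence conjugation by it is $2\pi$-periodic. You merely spell out the diagonal action on the Fourier basis, which the paper leaves implicit.
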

\begin{proof}
The spectrum of $\mathcal{K}$ is integer,  thus $e^{\mathrm{i}\kappa\cdot \mathcal{K}}=e^{\mathrm{i}(\kappa+2\pi)\cdot \mathcal{K}}$.
\end{proof}

The following Lemma plays an important role in the regularization process.
\begin{lem}\label{4.3}
Take the Cantor set $\Omega_{0,\alpha} \subseteq \Omega$ as
\begin{equation}\label{4.30}
\Omega_{0,\alpha}:=\Big\{\omega \in \Omega: |\omega \cdot\ell+ \mathrm{m}| \geq \frac{\alpha}{1+|\ell|^{d+2}}, \quad \forall (\ell, \mathrm{m}) \in \mathbb{Z}^{d+1}\setminus \{0\} \Big\}.
\end{equation}
Let $\mathcal{W}$ be an Hermitian operator and belongs to $ \mathcal{L}ip (\Omega,C^{\infty}(\mathbb{T}^d, OPS^{\eta})), \eta \leq 1$.
Then, the homological equation
\begin{equation}\label{h3}
\omega \cdot \partial_{\theta}  \mathcal{B}+\mathrm{i}[\mathcal{K},\mathcal{B}]=\mathcal{W}-\langle \mathcal{W} \rangle
\end{equation}
with
\begin{equation}\label{4.07}
\langle \mathcal{W} \rangle:= \frac{1}{(2\pi)^{d+1}}\int_{\mathbb{T}^d}\int_{\mathbb{T}}e^{\mathrm{i}\kappa\cdot \mathcal{K}} \mathcal{W} e^{-\mathrm{i}\kappa\cdot \mathcal{K}}d\kappa d\theta
\end{equation}
has a solution $\mathcal{B} \in \mathcal{L}ip (\Omega_{0,\alpha},C^{\infty}(\mathbb{T}^d, OPS^{\eta}))$. Moreover, the operator $\mathcal{B}$ is an Hermitian operator too.
\end{lem}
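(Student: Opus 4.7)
\textbf{Proof plan for Lemma \ref{4.3}.} The approach is to Fourier-expand the unknown $\mathcal{B}$ in both $\theta$ and in the block matrix representation of Section \ref{sec:block}, and solve the resulting scalar-valued equations block by block. Write $\mathcal{B}(\theta)=\sum_{\ell\in\mathbb{Z}^d}\hat{\mathcal{B}}(\ell)e^{\mathrm{i}\ell\cdot\theta}$ and similarly for $\mathcal{W}$. Since $\mathcal{K}\hat{e}_{\pm m}=m\hat{e}_{\pm m}$, the operator $\mathcal{K}$ acts on each block $E_m$ as the scalar $m$, so the commutator has the simple block expression $[\mathcal{K},\mathcal{B}]^{[m]}_{[n]}=(m-n)\,\mathcal{B}^{[m]}_{[n]}$. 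Consequently, the homological equation \eqref{h3} decouples into the family of equations
\begin{equation*}
\mathrm{i}\bigl(\omega\cdot\ell+(m-n)\bigr)\hat{\mathcal{B}}^{[m]}_{[n]}(\ell)=\hat{\mathcal{W}}^{[m]}_{[n]}(\ell)-\widehat{\langle\mathcal{W}\rangle}^{[m]}_{[n]}(\ell),\qquad (\ell,m,n)\in\mathbb{Z}^d\times\mathbb{N}^2.
\end{equation*}

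Next I identify $\langle\mathcal{W}\rangle$ through its block matrix elements. Since conjugation by $e^{\mathrm{i}\kappa\mathcal{K}}$ multiplies a matrix entry on $(\hat{e}_{i},\hat{e}_j)$ by $e^{\mathrm{i}\kappa(|i|-|j|)}$, the $\kappa$-average in \eqref{4.07} projects onto the block-diagonal part $|i|=|j|$, while the $\theta$-average retains only $\ell=0$. Thus $\widehat{\langle\mathcal{W}\rangle}^{[m]}_{[n]}(\ell)=\delta_{\ell,0}\delta_{m,n}\,\hat{\mathcal{W}}^{[m]}_{[m]}(0)$. On the kernel $\{\ell=0,\,m=n\}$ both sides vanish and we set $\hat{\mathcal{B}}^{[m]}_{[m]}(0)=0$. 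Otherwise $(\ell,m-n)\in\mathbb{Z}^{d+1}\setminus\{0\}$, so the Cantor condition defining $\Omega_{0,\alpha}$ gives $|\omega\cdot\ell+(m-n)|\geq \alpha/(1+|\ell|^{d+2})$ and we may set
\begin{equation*}
\hat{\mathcal{B}}^{[m]}_{[n]}(\ell)=\frac{-\mathrm{i}\bigl(\hat{\mathcal{W}}^{[m]}_{[n]}(\ell)-\widehat{\langle\mathcal{W}\rangle}^{[m]}_{[n]}(\ell)\bigr)}{\omega\cdot\ell+(m-n)}.
\end{equation*}
The polynomial loss $|\ell|^{d+2}$ is absorbed by the super-polynomial decay of $\hat{\mathcal{W}}(\ell)$ coming from $\mathcal{W}\in C^{\infty}(\mathbb{T}^d,OPS^{\eta})$, so $\mathcal{B}(\theta,\omega)$ is smooth in $\theta$ and Lipschitz in $\omega$ on $\Omega_{0,\alpha}$ (standard $\omega$-derivative bookkeeping using that the denominator is real and Lipschitz in $\omega$). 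The Hermitian character of $\mathcal{B}$ follows from the symmetry relation in Remark \ref{3.20}: since $\mathcal{W}^{*}=\mathcal{W}$ and $\langle\mathcal{W}\rangle^{*}=\langle\mathcal{W}\rangle$ (the average preserves self-adjointness), and because the denominator $\omega\cdot\ell+(m-n)$ is real with the correct antisymmetry in $(\ell,m-n)\mapsto(-\ell,n-m)$, the formula above respects $(\hat{\mathcal{B}}^{[m]}_{[n]}(-\ell))^{*}=\hat{\mathcal{B}}^{[n]}_{[m]}(\ell)$.

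The main obstacle, and the one that forces the use of the abstract pdo machinery of \cite{Bam17}, is verifying that the solution $\mathcal{B}$ actually lies in the symbol class $OPS^{\eta}$ rather than merely being a bounded operator. Dividing the matrix elements of a pseudo-differential operator by the denominator $\omega\cdot\ell+(m-n)$ need not preserve the decay estimates on $\Delta^{\beta}$-differences of the symbol in a naive way. The remedy, which I would carry out following Bambusi's abstract scheme, is to realise $\mathcal{B}$ intrinsically via the commuting flow associated with $\mathcal{K}$: because $\mathrm{Spec}(\mathcal{K})\subset\mathbb{Z}$, the map $\kappa\mapsto e^{\mathrm{i}\kappa\mathcal{K}}\mathcal{W}e^{-\mathrm{i}\kappa\mathcal{K}}$ is $2\pi$-periodic and remains in $C^{\infty}(\mathbb{T}^d,OPS^{\eta})$ with seminorms controlled by those of $\mathcal{W}$ (since conjugation by $e^{\mathrm{i}\kappa\mathcal{K}}$ amounts to the translation $x\mapsto x+\kappa\,\mathrm{sgn}(D)$ on symbols, up to order $-\infty$). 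Then $\mathcal{B}$ can be written as a convergent sum over $\ell\in\mathbb{Z}^d$ of integrals in $\kappa$ of such conjugates weighted by explicit Fourier multipliers depending on $\omega\cdot\ell$, and each operation in this representation is known to preserve $OPS^{\eta}$ with quantitative seminorm bounds. This yields $\mathcal{B}\in\mathcal{L}ip(\Omega_{0,\alpha},C^{\infty}(\mathbb{T}^d,OPS^{\eta}))$ as required.
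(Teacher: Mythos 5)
Your proposal is essentially the paper's argument. Where you present two layers (first a block-matrix decoupling of the homological equation, then a realisation via the $\kappa$-flow to keep control of the symbol class), the paper goes directly to the second layer: it sets $\mathcal{W}(\theta,\kappa):=e^{\mathrm{i}\kappa\mathcal{K}}\mathcal{W}(\theta)e^{-\mathrm{i}\kappa\mathcal{K}}\in C^{\infty}(\mathbb{T}^{d+1},OPS^{\eta})$ (justified by Remark \ref{8.4}), observes that $\mathrm{i}[\mathcal{K},\,\cdot\,]$ is exactly $\partial_{\kappa}$ on such families, and Fourier-expands jointly in $(\theta,\kappa)\in\mathbb{T}^{d+1}$ so that the homological equation becomes the scalar division $\mathrm{i}(\omega\cdot\ell+\mathrm{m})\hat{\mathcal{B}}_{\ell,\mathrm{m}}=\hat{\mathcal{W}}_{\ell,\mathrm{m}}$. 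Each $\hat{\mathcal{W}}_{\ell,\mathrm{m}}$ is already in $OPS^{\eta}$, the division is by a scalar, and super-polynomial decay of the seminorms $\chi^{\eta}_{\rho}(\hat{\mathcal{W}}_{\ell,\mathrm{m}})$ in $(\ell,\mathrm{m})$ absorbs the $|\ell|^{d+2}$ loss from the Cantor set, giving $\mathcal{B}(\theta)=\mathcal{B}(\theta,0)\in C^{\infty}(\mathbb{T}^d,OPS^{\eta})$ directly --- which is precisely the quantitative content your sentence ``each operation in this representation is known to preserve $OPS^{\eta}$'' was gesturing at, but made fully explicit and with no need to package things as $\kappa$-integrals against Fourier multipliers $g_{\ell}(\kappa)$. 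The Lipschitz estimate in $\omega$ is obtained by applying the same division twice (resolvent-identity style), and the Hermitian property follows from $\hat{\mathcal{B}}_{\ell,\mathrm{m}}=\hat{\mathcal{W}}_{\ell,\mathrm{m}}/(\mathrm{i}(\omega\cdot\ell+\mathrm{m}))$ together with Remark \ref{3.20}, as you say. One small cautionary note on your first (block-matrix) pass: that computation is a useful sanity check but does not by itself deliver the symbol-class statement, and your own remark acknowledges this --- it is better to skip it and argue on the level of symbols from the outset, as the paper does.
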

\begin{proof}
For any $ \mathcal{W} (\theta) \in \mathcal{L}ip (\Omega,C^{\infty}(\mathbb{T}^d, OPS^{\eta}))$, we define $\mathcal{W} (\theta,\kappa)=e^{\mathrm{i}\kappa\cdot \mathcal{K}} \mathcal{W}(\theta) e^{-\mathrm{i}\kappa\cdot \mathcal{K}}$. From Remark \ref{8.4}, we know that
 $$\mathcal{W} (\theta,\kappa) \in \mathcal{L}ip (\Omega,C^{\infty}(\mathbb{T}^{d+1}, OPS^{\eta})).$$
 Since $\mathcal{W} (\theta,\kappa)$ is defined on $\T^{d+1}$, it can be expanded by its Fourier series as
$$\mathcal{W} (\theta,\kappa)=\sum_{(\ell,\mathrm{m}) \in \Z^{d+1}}\hat{\mathcal{W}}_{\ell,\mathrm{m}}e^{\mathrm{i}( \ell \cdot\theta+\mathrm{m} \cdot \kappa)},$$
where
$$\mathcal{W} (\theta)=\mathcal{W} (\theta,0)=\sum_{(\ell,0) \in \Z^{d+1}}\hat{\mathcal{W}}_{\ell,0}e^{\mathrm{i} \ell \cdot \theta}.$$
The homological equation \eqref{h3} can be extended as
\begin{equation}\label{h2}
\omega \cdot \partial_{\theta}  \mathcal{B}(\theta,\kappa)+\mathrm{i}[\mathcal{K}, \mathcal{B}(\theta,\kappa)]=\mathcal{W}(\theta,\kappa)-\langle \mathcal{W}(\theta,\kappa) \rangle.
\end{equation}
Obviously, if $\mathcal{B}(\theta,\kappa)$ is the solution of equation \eqref{h2}, then $\mathcal{B}(\theta,0)$ is the solution of equation \eqref{h3}. Notice that
\begin{align*}
\mathrm{i}[\mathcal{K}, \mathcal{B}(\theta,\kappa)]&=\frac{d}{ds}\Big|_{s=0} e^{\mathrm{i}s\cdot \mathcal{K}} \mathcal{B}(\theta,\kappa) e^{-\mathrm{i}s \cdot \mathcal{K}}\\
&=\frac{d}{ds}\Big|_{s=0}\mathcal{B}(\theta,\kappa+s)=\sum_{(\ell,\mathrm{m}) \in\Z^{d+1}}\hat{\mathcal{B}}_{\ell,\mathrm{m}} \frac{d}{ds}\Big|_{s=0} e^{\mathrm{i} \ell \cdot \theta+\mathrm{i}\mathrm{m}\cdot(\kappa+s)}\\
&=\sum_{(\ell,\mathrm{m}) \in\Z^{d+1}} \mathrm{i} \mathrm{m} \hat{\mathcal{B}}_{\ell,\mathrm{m}} e^{\mathrm{i} \ell \cdot \theta+\mathrm{i}\mathrm{m} \cdot \kappa}.
\end{align*}
The homological equation \eqref{h2} is equivalent to
\begin{equation}
\mathrm{i}(\omega\cdot \ell+\mathrm{m})\hat{\mathcal{B}}_{\ell,\mathrm{m}}=\hat{\mathcal{W}}_{\ell,\mathrm{m}}, \quad  (\ell,\mathrm{m}) \neq (0, 0) \quad and \quad \hat{\mathcal{B}}_{0,0}=0.
\end{equation}

Since the operator $\mathcal{W}(\theta,\kappa)$  belongs to $\mathcal{L}ip (\Omega,C^{\infty}(\mathbb{T}^{d+1}, OPS^{\eta}))$, the seminorms of $\hat{\mathcal{W}}_{\ell,\mathrm{m}}$  decay faster than any power of $|\ell|+|\mathrm{m}|$. From the definition of $\Omega_{0,\alpha}$, we see that $\hat{\mathcal{B}}_{\ell,\mathrm{m}}$ also decay faster than any power of $|\ell|+|\mathrm{m}|$. Observing that $\mathcal{B}(\theta)=\mathcal{B}(\theta,0)$, thus $\mathcal{B}(\theta) \in C^{\infty}(\mathbb{T}^d,OPS^{\eta})$.

Furthermore, for any $\omega_1,\omega_2 \in \Omega_{0,\alpha}$, one has
\begin{align}
\hat{\mathcal{B}}_{\ell,\mathrm{m}}(\omega_1)-\hat{\mathcal{B}}_{\ell,\mathrm{m}}(\omega_2)=\frac{\hat{\mathcal{W}}_{\ell,\mathrm{m}}(\omega_1)[(\omega_2-\omega_1)\ell]}{\mathrm{i}(\omega_1 \ell+\mathrm{m})(\omega_2 \ell+\mathrm{m})}+\frac{\hat{\mathcal{W}}_{\ell,\mathrm{m}}(\omega_1)-\hat{\mathcal{W}}_{\ell,\mathrm{m}}(\omega_2)}{\mathrm{i}(\omega_2 \ell+\mathrm{m})}.
\end{align}
Hence, from the non-resonance condition \eqref{4.30}, we can obtain the Lipschitz regular of $\mathcal{B}$ to the parameter $\omega$.

Moreover, from
$$\mathcal{W}- \mathcal{W}^*=e^{-\mathrm{i}\kappa\cdot \mathcal{K}} (\mathcal{W}(\theta,\kappa)- \mathcal{W}^*(\theta,\kappa))e^{\mathrm{i}\kappa\cdot \mathcal{K}}, \quad \mathcal{B}- \mathcal{B}^*=e^{-\mathrm{i}\kappa\cdot \mathcal{K}} (\mathcal{B}(\theta,\kappa)- \mathcal{B}^*(\theta,\kappa))e^{\mathrm{i}\kappa\cdot \mathcal{K}},$$
we know that $\mathcal{W}$(resp $\mathcal{B}$) is an Hermitian operator, if and only if  $\mathcal{W}(\theta,\kappa)$(resp $\mathcal{B}(\theta,\kappa)$) is an Hermitian operator.
From $\hat{\mathcal{B}}_{\ell,\mathrm{m}}=\frac{\hat{\mathcal{W}}_{\ell,\mathrm{m}}}{\mathrm{i}(\omega\cdot \ell+\mathrm{m})}$ and Remark \ref{3.20}, we obtain that $\mathcal{B}$ is an Hermitian operator.
\end{proof}

\begin{thm}\label{3.0}
For any $M> 0$, there exists a sequence of symmetric maps $\{\mathcal{B}_i(\theta,\omega)\}^M_{i=0}$ with $\mathcal{B}_i(\theta,\omega) \in \mathcal{L}ip (\Omega_{0,\alpha},C^{\infty}(\mathbb{T}^d, OPS^{\frac{1}{2}-\frac{1}{2}i}))$ such that the change of variables $$u=e^{-\varepsilon \mathrm{i} \mathcal{B}_0(\theta,\omega)} \cdots e^{-\varepsilon \mathrm{i} \mathcal{B}_M(\theta,\omega)}v$$ conjugates  the Hamiltonian $\mathcal{H}_0=\mathcal{K}+\mathcal{Q} + \varepsilon\mathcal{W }(\omega t)$ to
\begin{equation}
\mathcal{H}_{M+1}= \mathcal{K}+\mathcal{Q}+\varepsilon \mathcal{Z}^{M+1}+\varepsilon \mathcal{W}^{M+1},
\end{equation}
where $\mathcal{Z}^{M+1}$ is time-independent and fulfils
\begin{equation}\label{3.10}
[\mathcal{Z}^{M+1},\mathcal{K}]=0.
\end{equation}
Also,
\begin{align}\label{3.1}
\mathcal{Z}^{M+1}(\omega) &\in  \mathcal{L}ip (\Omega_{0,\alpha}, OPS^{\frac{1}{2}}),\\
\label{3.2}
\mathcal{W}^{M+1}(\theta, \omega) &\in \mathcal{L}ip (\Omega_{0,\alpha},C^{\infty}(\mathbb{T}^d, OPS^{-\frac{1}{2}M})).
\end{align}
Furthermore, $\mathcal{Z}^{M+1},\mathcal{W}^{M+1}$ are Hermitian operators.
\end{thm}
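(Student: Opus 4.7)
The plan is to proceed by induction on $i \in \{0, 1, \ldots, M+1\}$, at each stage performing a single unitary conjugation that strips the top pseudo-differential order from the time-dependent part of the Hamiltonian. The inductive hypothesis is that
$$\mathcal{H}_i = \mathcal{K} + \mathcal{Q} + \varepsilon \mathcal{Z}^i + \varepsilon \mathcal{W}^i,$$
with $\mathcal{Z}^i \in \mathcal{L}ip(\Omega_{0,\alpha}, OPS^{1/2})$ time-independent, Hermitian, and commuting with $\mathcal{K}$, and with $\mathcal{W}^i \in \mathcal{L}ip(\Omega_{0,\alpha}, C^\infty(\mathbb{T}^d, OPS^{1/2 - i/2}))$ Hermitian. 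The base case $i = 0$ is immediate upon setting $\mathcal{Z}^0 = 0$ and $\mathcal{W}^0 = \mathcal{W}$, using condition \textbf{(C1)}.

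For the inductive step, I would invoke Lemma \ref{4.3} with $\eta = 1/2 - i/2 \le 1/2$ to produce a Hermitian $\mathcal{B}_i \in \mathcal{L}ip(\Omega_{0,\alpha}, C^\infty(\mathbb{T}^d, OPS^{1/2 - i/2}))$ solving
$$\omega \cdot \partial_\theta \mathcal{B}_i + i [\mathcal{K}, \mathcal{B}_i] = \mathcal{W}^i - \langle \mathcal{W}^i \rangle.$$
Two features of the averaged operator $\langle \mathcal{W}^i \rangle$ are needed. First, it commutes with $\mathcal{K}$: differentiating (\ref{4.07}) in $\kappa$ and using that $e^{i\kappa \mathcal{K}}$ is $2\pi$-periodic in $\kappa$ (valid because $\mathcal{K}$ has integer spectrum) forces $[\mathcal{K}, \langle \mathcal{W}^i \rangle] = 0$. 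Second, since conjugation by $e^{i\kappa \mathcal{K}}$ is unitary, Hermiticity of $\langle \mathcal{W}^i \rangle$ follows from that of $\mathcal{W}^i$.

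I then conjugate $\mathcal{H}_i$ by $\Phi_i := e^{-i\varepsilon \mathcal{B}_i(\omega t)}$, setting
$$\mathcal{H}_{i+1} := \Phi_i^{-1} \mathcal{H}_i \Phi_i - i \Phi_i^{-1} \partial_t \Phi_i.$$
Expanding both terms as Lie series and using the Duhamel identity $\Phi_i^{-1} \partial_t \Phi_i = -i\varepsilon \omega \cdot \partial_\theta \mathcal{B}_i + O(\varepsilon^2)$, the $\varepsilon$-linear contribution collapses via the homological equation to $\varepsilon \langle \mathcal{W}^i \rangle$. I set $\mathcal{Z}^{i+1} := \mathcal{Z}^i + \langle \mathcal{W}^i \rangle$ and bundle the rest into $\varepsilon \mathcal{W}^{i+1}$. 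The central check is the standard $\psi$DO commutator rule $OPS^a \times OPS^b \to OPS^{a+b-1}$: $[\mathcal{B}_i, \mathcal{Q}]$ has order $-3/2 - i/2$; $[\mathcal{B}_i, \mathcal{Z}^i]$ has order $-i/2$; $[\mathcal{B}_i, \mathcal{W}^i]$ and the double bracket $[\mathcal{B}_i, [\mathcal{K}, \mathcal{B}_i]]$ have order $-i$; and the higher Lie and Duhamel brackets are of the same shape or better. All are bounded by $-i/2 = 1/2 - (i+1)/2$, so $\mathcal{W}^{i+1}$ has the claimed order. Hermiticity propagates since $\Phi_i$ is unitary, and Lipschitz regularity in $\omega$ passes through both the homological solve (Lemma \ref{4.3}) and the composition estimates.

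The main obstacle is precisely this order bookkeeping: the contribution $\varepsilon[\mathcal{B}_i, \mathcal{Z}^i]$ saturates the target order $-i/2$, so I must verify that no other tail of the Lie series exceeds it, and that the expansion of $\Phi_i^{-1} \partial_t \Phi_i$ is justified within the symbol class rather than only formally, so that the Lipschitz seminorms of $\mathcal{W}^{i+1}$ remain controlled through each of the $M+1$ iterations (note that the $\mathcal{B}_i$ do not themselves shrink with $i$, so smallness comes solely from lowering the pseudo-differential order). Iterating from $i = 0$ to $i = M$ then yields the claimed Hamiltonian $\mathcal{H}_{M+1}$ with $\mathcal{W}^{M+1} \in OPS^{-M/2}$ and $\mathcal{Z}^{M+1} \in OPS^{1/2}$ time-independent, Hermitian, and commuting with $\mathcal{K}$.
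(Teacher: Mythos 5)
Your proposal is correct and follows essentially the same approach as the paper: an induction in which each step invokes Lemma~\ref{4.3} to produce the Hermitian $\mathcal{B}_i$ solving the homological equation, defines $\mathcal{Z}^{i+1}=\mathcal{Z}^i+\langle\mathcal{W}^i\rangle$, and verifies via the $\psi$DO commutator calculus (Remark~\ref{8.2}, Lemma~\ref{8.3}) that the residual terms coming from conjugating $\mathcal{K},\mathcal{Q},\mathcal{Z}^i,\mathcal{W}^i$ and from the Duhamel expansion of $\Phi_i^{-1}\partial_t\Phi_i$ all have order at most $\tfrac12-\tfrac12(i+1)$. Your order bookkeeping (including the fact that $\varepsilon^2[\mathcal{B}_i,\mathcal{Z}^i]$ saturates the target order, giving $\mathcal{W}^{i+1}\in OPS^{-i/2}$) matches the paper's inventory of the remainder terms~\eqref{4.30},\eqref{4.4},\eqref{4.5},\eqref{4.7},\eqref{4.6}, and your observations that $[\mathcal{K},\langle\mathcal{W}^i\rangle]=0$ and that Hermiticity and Lipschitz regularity propagate are exactly what the paper records.
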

\begin{proof}
We prove this theorem by the induction method.

For $i=0$,  the hypotheses are verified for  $\mathcal{Z}^0=0$, $\mathcal{W}^{0}=\mathcal{W}$.

Moreover, suppose that $\mathcal{H}_i$  satisfies the conditions \eqref{3.1} and \eqref{3.2}.

There exists a transformation operator $e^{-\varepsilon \mathrm{i} \mathcal{B}_i(\theta,\omega)} $ conjugating $\mathcal{H}_i$ to $\mathcal{H}_{i+1}$, where
\begin{align}
\mathcal{H}_{i+1}=&\mathcal{K} +\mathcal{Q}+\varepsilon\mathcal{Z}^i+\varepsilon\langle \mathcal{W}^i\rangle \\
\label{4.2}
&+\varepsilon \big( -\omega \cdot \partial_{\theta}+\mathrm{i}[\mathcal{B}_i,\mathcal{K}]+\mathcal{W}^i-\langle \mathcal{W}^i\rangle \big) \\
\label{4.30}
&+e^{\varepsilon \mathrm{i} \mathcal{B}_i(\theta,\omega)} \mathcal{K} e^{-\varepsilon \mathrm{i} \mathcal{B}_i(\theta,\omega)}-\mathcal{K}-\varepsilon \mathrm{i}[\mathcal{B}_i,\mathcal{K}]\\
\label{4.4}
&+\varepsilon e^{\varepsilon \mathrm{i} \mathcal{B}_j(\theta,\omega)} \mathcal{Z}^{i} e^{-\varepsilon \mathrm{i} \mathcal{B}_i(\theta,\omega)}-\varepsilon \mathcal{Z}^{i}\\
\label{4.5}
&+e^{\varepsilon \mathrm{i} \mathcal{B}_j(\theta,\omega)} \mathcal{Q} e^{-\varepsilon \mathrm{i} \mathcal{B}_i(\theta,\omega)}-\mathcal{Q}\\
\label{4.7}
&+\varepsilon e^{\varepsilon \mathrm{i} \mathcal{B}_i(\theta,\omega)} \mathcal{W}^{i} e^{-\varepsilon \mathrm{i} B_i(\theta,\omega)}-\varepsilon \mathcal{W}^{i}\\
\label{4.6}
&+\mathrm{i}\varepsilon^2 \int^1_0(1-s)e^{\varepsilon \mathrm{i} s \mathcal{B}_i(\theta,\omega)}\mathrm{i}[\mathcal{B}_i,\omega\cdot \partial_{\theta} \mathcal{B}_i]e^{-\varepsilon \mathrm{i} s \mathcal{B}_i(\theta,\omega)}ds.
\end{align}

From Lemma \ref{4.3}, there exists an operator  $\mathcal{B}_{i}$  making   \eqref{4.2} equals to zero . From Remark \ref{8.2} and Lemma \ref{8.3}, we have
\begin{equation*}
\eqref{4.30} \in \mathcal{L}ip (\Omega_{0,\alpha},C^{\infty}(\mathbb{T}^d, OPS^{-i})),
\end{equation*}
\begin{equation*}
\eqref{4.4} \in \mathcal{L}ip (\Omega_{0,\alpha},C^{\infty}(\mathbb{T}^d, OPS^{\frac{1}{2}-\frac{1}{2}(i+1)})),
\end{equation*}
\begin{equation*}
\eqref{4.5} \in \mathcal{L}ip (\Omega_{0,\alpha},C^{\infty}(\mathbb{T}^d, OPS^{-\frac{3}{2}-\frac{1}{2}i})),
\end{equation*}
\begin{equation*}
\eqref{4.7} \in \mathcal{L}ip (\Omega_{0,\alpha},C^{\infty}(\mathbb{T}^d, OPS^{-i})),
\end{equation*}
\begin{equation*}
\eqref{4.6} \in \mathcal{L}ip (\Omega_{0,\alpha},C^{\infty}(\mathbb{T}^d, OPS^{-i})).
\end{equation*}
Rearranging the expression of $\mathcal{H}_{i+1}$ and setting
 $$ \varepsilon \mathcal{Z}^{i+1}=\varepsilon \mathcal{Z}^i+\varepsilon \langle \mathcal{W}^i \rangle,$$
 $$\varepsilon \mathcal{W}^{j+1}=\eqref{4.30}+\eqref{4.4}+\eqref{4.5}+\eqref{4.7}+\eqref{4.6}.$$
 Now, $\mathcal{Z}^{i+1}$ and $\mathcal{W}^{i+1}$ satisfy the hypothesis \eqref{3.1} and \eqref{3.2} with $i+1$.
It is easy to verified that $\mathcal{Z}^{i+1}$ and $\mathcal{W}^{i+1}$ are Hermitian operators.
\end{proof}
\begin{rem}
From Lemma \ref{8.1}, for all $j=0,1,2, \cdots, M$, the operator $e^{\pm\mathrm{i}\varepsilon \mathcal{B}_{j}} \in \mathcal{L}(\mathrm{H}^{r}),\forall r\geq 0$, and
\begin{equation}
\|e^{\pm \mathrm{i} \varepsilon \mathcal{B}_{j}}-\mathrm{Id}\|_{\mathcal{L}(\mathrm{H}^r,\mathrm{H}^{r-(\frac{1}{2}-\frac{1}{2}j)})}\lesssim \varepsilon \| \mathcal{B}_{j}\|_{\mathcal{L}(\mathrm{H}^r,\mathrm{H}^{r-(\frac{1}{2}-\frac{1}{2}j)})} .
\end{equation}
\end{rem}
Moreover, we also show that the closed set $\Omega_{0,\alpha}$ is asymptotically full Lebesgue.
\begin{prop}\label{4.8}
$$meas(\Omega \backslash \Omega_{0,\alpha}) \leq C \alpha.$$
\end{prop}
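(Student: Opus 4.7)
The plan is to perform the standard complementary measure estimate for Diophantine-type conditions: decompose the bad set $\Omega \setminus \Omega_{0,\alpha}$ into the union over $(\ell,\mathrm{m}) \in \mathbb{Z}^{d+1}\setminus\{0\}$ of the single resonant slabs
\[
R_{\ell,\mathrm{m}} := \Bigl\{ \omega \in \Omega : |\omega \cdot \ell + \mathrm{m}| < \tfrac{\alpha}{1+|\ell|^{d+2}} \Bigr\},
\]
bound each slab by a Fubini slicing argument, and sum the contributions.

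First I would dispose of the case $\ell = 0$: then $(\ell,\mathrm{m})\neq 0$ forces $|\mathrm{m}|\geq 1$, so for $\alpha$ small the condition $|\mathrm{m}| \geq \alpha$ is automatic and $R_{0,\mathrm{m}} = \emptyset$. For $\ell \neq 0$, pick a coordinate $k$ with $|\ell_k| = |\ell|_\infty \geq |\ell|/\sqrt{d}$. Fixing the remaining coordinates of $\omega \in [1,2]^d$, the map $\omega_k \mapsto \omega\cdot\ell + \mathrm{m}$ is affine with slope $\ell_k$, so the set of admissible $\omega_k$ has one-dimensional Lebesgue measure at most $\tfrac{2\alpha}{|\ell_k|(1+|\ell|^{d+2})}$. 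Integrating over the other coordinates gives
\[
\mathrm{meas}(R_{\ell,\mathrm{m}}) \;\leq\; \frac{C\alpha}{|\ell|(1+|\ell|^{d+2})}.
\]

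Next I count the number of $\mathrm{m}\in\mathbb{Z}$ that can produce a nonempty slab: since $\omega \in [1,2]^d$, $\omega\cdot\ell$ ranges over an interval of length at most $|\ell|_1 \leq d|\ell|_\infty$, so the nontrivial $\mathrm{m}$ lie in a set of cardinality $\leq C|\ell|$ (for $\alpha$ small). Combining, for each $\ell \neq 0$,
\[
\sum_{\mathrm{m}\in\mathbb{Z}} \mathrm{meas}(R_{\ell,\mathrm{m}}) \;\leq\; C|\ell| \cdot \frac{C\alpha}{|\ell|(1+|\ell|^{d+2})} \;\leq\; \frac{C\alpha}{1+|\ell|^{d+2}}.
\]
Summing over $\ell \in \mathbb{Z}^d \setminus\{0\}$ and using that $\sum_{\ell \neq 0}(1+|\ell|^{d+2})^{-1}$ converges (the exponent $d+2$ strictly exceeds $d$, even with room to spare after losing a factor $|\ell|$ from the count of $\mathrm{m}$), yields $\mathrm{meas}(\Omega \setminus \Omega_{0,\alpha}) \leq C\alpha$.

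There is no real obstacle here; the only point that requires attention is the bookkeeping of the two $|\ell|$-losses — one from the Fubini slicing (which gains $|\ell|^{-1}$) and one from counting admissible $\mathrm{m}$ (which costs $|\ell|$) — which cancel, leaving the bare weight $(1+|\ell|^{d+2})^{-1}$ supplied by the small-divisor condition. The exponent $d+2$ in \eqref{4.30} is chosen precisely so that the residual series over $\ell$ converges with margin, which is why the final bound is linear in $\alpha$.
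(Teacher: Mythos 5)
Your argument is correct and follows the same route as the paper: decompose $\Omega\setminus\Omega_{0,\alpha}$ into the resonant slabs indexed by $(\ell,\mathrm{m})$, observe that only $|\mathrm{m}|\lesssim|\ell|$ contribute nonempty slabs, bound each slab's measure, and sum. The one cosmetic difference is that you carry the extra $|\ell|^{-1}$ gain from the Fubini slicing, whereas the paper uses the cruder bound $\mathrm{meas}(Q_{\ell,\mathrm{m}})\leq 2\alpha/(1+|\ell|^{d+2})$ and lets the $|\ell|$-loss from counting $\mathrm{m}$ be absorbed by the exponent $d+2$ directly; both versions converge, and yours is marginally sharper but not essentially different.
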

\begin{proof}
Set $Q_{\ell,\mathrm{m}}$ as
\begin{equation}
\Big\{\omega \in \Omega:|\omega \cdot \ell+\mathrm{m}|<\frac{\alpha}{1+|\ell|^{d+2}} \Big\}.
\end{equation}

If $|\ell| <\frac{ |\mathrm{m}|}{2}$, the set $Q_{\ell,m}$ is empty.

If $|\ell| \geq \frac{|\mathrm{m}|}{2} $,  one gets that
\begin{equation}
meas(Q_{\ell,\mathrm{m}}) \leq \frac{2\alpha}{1+|\ell|^{d+2}}.
\end{equation}
Finally, we have
\begin{equation}
meas(\Omega\backslash \Omega_{0,\alpha}) \leq meas (\bigcup_{(\ell,\mathrm{m})\in \Z^{d+1}\backslash \{0\}}Q_{\ell,m})\leq \sum_{|\mathrm{m}| \leq 2|\ell|, \ell \in \mathbb{Z}^d} meas(Q_{\ell,\mathrm{m}}) \leq C\alpha.
\end{equation}

\end{proof}

\section{KAM reducibility}

\subsection{The reducibility theorem}\

In this paper, the number of regularization step in Theorem \ref{3.0} is
\begin{equation}
M:=4m+1.
\end{equation}
After $M$ steps of regularization in the previous section, we get the new equation
\begin{equation}\label{R1}
\mathrm{i}\omega \cdot \partial_{\theta}u=\mathcal{H}^{M}u=\mathbf{\Lambda}^0u+ \mathbf{P}^0u,
\end{equation}
where $\mathbf{\Lambda}^0=\mathcal{K}+\mathcal{Q}+\varepsilon \mathcal{Z}^M$ and  $\mathbf{P}^0 =\varepsilon \mathcal{W}^M$.

The equation \eqref{R1} satisfies the following assumptions:\

\textbf{(A1)} The linear operator $\mathbf{\Lambda}^0$ is an Hermitian operator, block diagonal, and independent of $\theta$, Lipschitz on $\omega \in\Omega_{0,\alpha}$. Denoting $(\lambda_{j,k})_{k=1,2}$ as the eigenvalue of the  block $(\mathbf{\Lambda}^0)^{[j]}_{[j]}$, for any $\omega \in \Omega_{0,\alpha}$, there exists a constant $c_0$ such that
\begin{equation}
|\lambda_{i,k}-\lambda_{j,k^{'}}| \geq c_0|i-j|, \quad \forall k,k'=1,2, \ and \ i \neq j,
\end{equation}
\begin{equation}
|\lambda_{j,k}(\omega)|^{lip,\Omega_{0,\alpha}}=\sup_{\omega_1,\omega_2 \in \Omega_{0,\alpha}}\frac{|\lambda_{j,k}(\omega_1)-\lambda_{j,k}(\omega_2)|}{|\omega_1-\omega_2|} \leq \frac{1}{8},  \quad \forall j \in \mathbb{N}, \  k=1,2.
\end{equation}

\textbf{(A2)} The linear operator $\mathbf{P}^0$  is an Hermitian operator and belongs to  $\mathcal{M}^{S,\mathcal{L}ip}_{S-m,S+m}$, $S >\frac{d+1}{2}$.
\begin{rem}
The assumption \textbf{(A2)} can be obtained  from the Theorem \ref{3.0} and Prop \ref{8.5}. For assumption \textbf{(A1)}, we need the following Lemma.
\end{rem}

\begin{lem}\label{z1}
Suppose that $\mathcal{W}(\omega t) \in C^{\infty}(\mathbb{T}^d, OPS^{\frac{1}{2}})$, the eigenvalues $(\lambda_{j,k})_{k=1,2}$  of the  block $(\mathbf{\Lambda}_0)^{[j]}_{[j]}$ have the asymptotic expression
\begin{equation}
\lambda_{j,k}=(j^2+\mathfrak{m}^2)^{\frac{1}{2}}+\varepsilon a\langle j\rangle^{\frac{1}{2}}+r_{j,k}.
\end{equation}
where $ |r_{j,k}|^{\mathcal{L}ip,\Omega_{0,\alpha}} \leq C \varepsilon.$
\end{lem}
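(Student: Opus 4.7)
The strategy is to compute the $[j]$-block of $\mathbf{\Lambda}^0=\mathcal{K}+\mathcal{Q}+\varepsilon\mathcal{Z}^M$ on the basis $\{e^{ijx},e^{-ijx}\}$ and read off its eigenvalues. Since $\mathcal{K}+\mathcal{Q}=(-\partial_{xx}+\mathfrak{m}^2)^{1/2}$ is a Fourier multiplier, it acts on $E_j$ as the scalar $(j^2+\mathfrak{m}^2)^{1/2}I_2$, so the task reduces to describing the $2\times 2$ Hermitian block of $\mathcal{Z}^M$.

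First I would split $\mathcal{Z}^M$ into its dominant and remainder parts. The inductive recurrence $\mathcal{Z}^{i+1}=\mathcal{Z}^i+\langle\mathcal{W}^i\rangle$ in the proof of Theorem \ref{3.0}, together with $\mathcal{W}^i\in OPS^{1/2-i/2}$, shows that only $\langle\mathcal{W}^0\rangle=\langle\mathcal{W}\rangle$ contributes at order $1/2$, while $\sum_{i\geq 1}\langle\mathcal{W}^i\rangle$ lies in $OPS^0$ and yields uniformly bounded block entries. For the leading piece, since $\mathcal{K}$ has integer spectrum, the $\kappa$-average in \eqref{4.07} projects onto the joint eigenspaces of $\mathcal{K}$, so $\langle\mathcal{W}\rangle$ is block-diagonal. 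A direct Fourier computation gives
\[
\langle\mathcal{W}\rangle_{\pm j,\pm j}=\frac{1}{(2\pi)^{d+1}}\int_{\mathbb{T}^d}\!\!\int_{\mathbb{T}} w(\theta,x,\pm j)\,dx\,d\theta=a\langle j\rangle^{1/2}+b(\pm j),
\]
by condition \textbf{(C2)}, whereas the off-diagonal entry $\langle\mathcal{W}\rangle_{j,-j}$ equals the $\theta$-average of the $x$-Fourier coefficient $\widehat{w}_{-2j}(\theta,j)$ (its conjugate giving the other off-diagonal slot by Hermitian symmetry).

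Collecting everything, the $[j]$-block takes the form
\[
(\mathbf{\Lambda}^0)^{[j]}_{[j]}=\bigl((j^2+\mathfrak{m}^2)^{1/2}+\varepsilon a\langle j\rangle^{1/2}\bigr)I_2+\varepsilon B_j,
\]
where $B_j$ is Hermitian with entries bounded uniformly in $j$ and Lipschitz in $\omega\in\Omega_{0,\alpha}$ (by Theorem \ref{3.0}). The explicit $2\times 2$ eigenvalue formula then yields $\lambda_{j,k}=(j^2+\mathfrak{m}^2)^{1/2}+\varepsilon a\langle j\rangle^{1/2}+r_{j,k}$ with $r_{j,k}=\varepsilon\cdot O(1)$; Weyl's inequality $|\lambda_{j,k}(\omega_1)-\lambda_{j,k}(\omega_2)|\leq \varepsilon\,\|B_j(\omega_1)-B_j(\omega_2)\|$ promotes this pointwise bound into the claimed Lipschitz estimate on $\Omega_{0,\alpha}$.

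The main obstacle is controlling the off-diagonal block entry $\widehat{w}_{-2j}(\theta,j)$: the symbol $w(\theta,x,j)$ grows like $\langle j\rangle^{1/2}$ in $j$, so a crude Fourier bound is insufficient. One must exploit the smoothness of $w$ in $x$ (built into the class $S^{1/2}$), which furnishes, for every $N$, an estimate $|\widehat{w}_k(\theta,j)|\leq C_N\langle j\rangle^{1/2}\langle k\rangle^{-N}$; specialising to $k=-2j$ and choosing $N$ large beats the $\langle j\rangle^{1/2}$ growth and delivers an $O(1)$ (in fact rapidly decaying) off-diagonal entry, uniformly in $j$ and $\omega$, which is what closes the argument.
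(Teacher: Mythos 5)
Your proposal is correct and follows essentially the same route as the paper: decompose $\mathcal{Z}^M$ into the leading $\langle\mathcal{W}\rangle\in OPS^{1/2}$ plus the $OPS^0$ remainder, identify the diagonal entries of the $[j]$-block via condition \textbf{(C2)}, control the off-diagonal entry $w_{0,\pm 2j}(\pm j)$ by exploiting smoothness of the symbol in $x$ (the paper integrates by parts once to get the bound $\lesssim \langle j\rangle^{1/2}/(1+2|j|)$, which is already enough), and finally apply Weyl's perturbation theorem to pass from the matrix remainder $\varepsilon B_j$ to the eigenvalue remainder $r_{j,k}$ and its Lipschitz bound. The only cosmetic difference is that the paper explicitly writes the $\langle\mathcal{W}\rangle_A+\langle\mathcal{W}\rangle_B$ split of the $2\times 2$ block, whereas you absorb the bounded part directly into $B_j$; the content is identical.
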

\begin{proof}
From Theorem \ref{3.0}, one gets
\begin{equation*}
\mathbf{\Lambda}^0=\mathcal{K}+\mathcal{Q}+\varepsilon \mathcal{Z}^M,
\end{equation*}
where $\mathcal{Z}^M=\langle \mathcal{W}^0\rangle+\langle \mathcal{W}^1\rangle+ \cdots +\langle \mathcal{W}^{M-1}\rangle$. We know that $\langle \mathcal{W}^0\rangle=\langle \mathcal{W}\rangle \in OPS^{\frac{1}{2}}$, and $\langle \mathcal{W}^1\rangle+\langle \mathcal{W}^2\rangle+ \cdots +\langle \mathcal{W}^{M-1}\rangle \in OPS^{0}$. The symbol of pseudo-differential operator  $\mathcal{W}$ can be written as
\begin{equation}
w(\theta,x,j)=\sum_{\ell \in \Z^{d}}w(x,j)(\ell)e^{\mathrm{i}\ell \cdot \theta}= \sum_{(\ell,k) \in \Z^{d+1}}w_{\ell,k}(j)e^{\mathrm{i} \ell \cdot \theta}e^{\mathrm{i}k \cdot x}.
\end{equation}
From Definition \ref{defn:symbols20} and \eqref{3.11}, \eqref{4.07}, one has
\begin{equation}
\langle\mathcal{W(\theta)}\rangle^{[j]}_{[j]}=\begin{pmatrix} w_{0,0}(j) & w_{0,-2j}(j)\\
w_{0,2j}(-j) & w_{0,0}(-j)
  \end{pmatrix}.
\end{equation}
These four elements in the matrix are independent of $\omega$. From Definition \ref{defn:symbols_semi} and condition \textbf{(C2)},  one gets
\begin{equation*}
w_{0,0}(\pm j)=\frac{1}{(2\pi)^{d+1}}\int_{\mathbb{T}^{d+1}}w(\theta,x,\pm j)dxd\theta=a\langle j \rangle^{\frac{1}{2}}+b(j),
\end{equation*}
\begin{equation*}
|w_{0,\pm2j}(\pm j)| \leq C \frac{\sum_{\beta \leq 1}\sup_{x \in \mathbb{T}}|\partial_{x}^{\beta}w(x,\pm j)(0)|}{1+|2j|} \leq \tilde{C} \frac{\langle j \rangle^{\frac{1}{2}}}{1+|2j|}.
\end{equation*}

We write  $\langle \mathcal{W}\rangle $ as $\langle \mathcal{W}\rangle_{A}+\langle \mathcal{W}\rangle_{B}$,
where
\begin{equation}
[\langle\mathcal{W(\theta)}\rangle_A]^{[j]}_{[j]}=\begin{pmatrix} a\langle j \rangle^{\frac{1}{2}} & 0 \\
0 & a\langle j \rangle^{\frac{1}{2}}
  \end{pmatrix},
  \quad
  [\langle\mathcal{W(\theta)}\rangle_B]^{[j]}_{[j]}=\begin{pmatrix} b(j) & w_{0,-2j}( j)\\
w_{0,2j}(- j) & b(-j)
  \end{pmatrix}.
\end{equation}

Denoting   $(\mu_{j,k})_{k=1,2}$ as the eigenvalues of the block $[\mathcal{K}+\mathcal{Q}+\langle\mathcal{W(\theta)}\rangle_A]^{[j]}_{[j]}$, one has $$\mu_{j,k}=(j^2+\mathfrak{m}^2)^{\frac{1}{2}}+a\langle j\rangle^{\frac{1}{2}}.$$

Let $\mathcal{R}=\langle\mathcal{W(\theta)}\rangle_B +\langle \mathcal{W}^1\rangle+ \cdots +\langle \mathcal{W}^{M-1}\rangle$, from  Theorem \ref{3.0} and Prop \ref{8.5}, for any $S>\frac{d+1}{2}$, one has $\mathcal{R} \in \mathcal{M}^{S,\mathcal{L}ip}_{S,S} $. From Prop \ref{8.6} and Corollary A.7 in \cite{F.G2019}, the Lipschitz variation of the eigenvalues of an Hermitian matrix is controlled by the Lipschitz variation of the matrix. Then, we can get
\begin{equation}
|r_{j,k}|^{\mathcal{L}ip}=|\lambda_{j,k}-\mu_{j,k}|^{\mathcal{L}ip} \leq \|\varepsilon \mathcal{R}^{[j]}_{[j]}\|^{\mathcal{L}ip} \leq C\varepsilon.
\end{equation}

Finally, the Lemma is proved.
\end{proof}

$$Set  \quad \epsilon_0= \|\mathbf{P}^0\|^{S,\mathcal{L}ip}_{S-m,S+m}=\|\varepsilon \mathcal{W}^M\|^{S,\mathcal{L}ip}_{S-m,S+m}.$$
The main goal of this section is the following  theorem.
\begin{thm}\label{5.30}\textbf{(The Reducibility Theorem)}Let $s \in[s_0,S-\beta]$, $r\in[0,S-\beta-\frac{d+1}{2})$ and  $\alpha \in (0,1)$. There exists a positive  $\epsilon_0:=\epsilon_0(s,d)$ such that, if  $\epsilon \leq \epsilon_0$, there exists a Cantor subset $\Omega_{\epsilon} \subseteq \Omega_{0,\alpha}$ with $$meas(\Omega_{0,\alpha} \backslash \Omega_{\epsilon}) \leq C \alpha.$$ For any $\omega \in \Omega_{\epsilon} $, there exist a family of bounded and invertible operators $\Phi_{\infty}:=\Phi_{\infty}(\omega,\theta) \in \mathcal{L}(\mathrm{H}^s)$ conjugating the linear equation \eqref{R1} to
\begin{equation}
\mathrm{i} \omega \cdot \partial_{\theta}u=\mathbf{H}^{\infty}u,
\end{equation}
  where $\mathbf{H}^{\infty}$ is a time independent and  block-diagonal Hamiltonian operator. Moreover, we have
  \begin{equation}
  \sup_{\theta \in \mathbb{T}^d}\|\Phi_{\infty}^{\pm1}(\theta)-\mathrm{Id}\|_{\mathcal{L}(\mathrm{H}^r)} \leq C\epsilon_0, \quad \forall \omega \in \Omega_{\epsilon}.
  \end{equation}
\end{thm}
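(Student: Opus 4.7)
The plan is to run a standard quadratic KAM iteration starting from the pair $(\mathbf{\Lambda}^0, \mathbf{P}^0)$, exploiting in a crucial way the smoothing norm $\|\cdot\|^{s,\mathcal{L}ip}_{s-m,s+m}$ of Definition \ref{3.40} in which the perturbation gains $2m$ derivatives from left to right. At the $n$-th step I will have $\mathrm{i}\omega\cdot\partial_\theta - \mathbf{\Lambda}^n - \mathbf{P}^n$, with $\mathbf{\Lambda}^n$ Hermitian, block-diagonal, $\theta$-independent and close to $\mathbf{\Lambda}^0$, and $\mathbf{P}^n$ Hermitian with $\epsilon_n := \|\mathbf{P}^n\|^{s_0,\mathcal{L}ip}_{s_0-m,s_0+m}$ decaying super-exponentially, while the high norm $\|\mathbf{P}^n\|^{s,\mathcal{L}ip}_{s-m,s+m}$ for $s\leq S-\beta$ grows at most geometrically. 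A Hermitian generator $\Psi_n$ is constructed by truncating at scale $N_n = N_0^{(3/2)^n}$ and solving the homological equation
\begin{equation*}
\omega\cdot\partial_\theta \Psi_n + \mathrm{i}[\Psi_n,\mathbf{\Lambda}^n] = \Pi_{N_n}\mathbf{P}^n - [\mathbf{P}^n],
\end{equation*}
where $[\mathbf{P}^n]$ is the $\theta$-average block-diagonal part (the only unremovable resonant term); the map $\Phi_n = e^{\mathrm{i}\Psi_n}$ then conjugates $\mathbf{\Lambda}^n + \mathbf{P}^n$ to $\mathbf{\Lambda}^{n+1} + \mathbf{P}^{n+1}$ with $\mathbf{\Lambda}^{n+1} = \mathbf{\Lambda}^n + [\mathbf{P}^n]$.

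Projecting onto block indices $[i],[j]$ and Fourier modes $\ell$ and diagonalising the $2\times 2$ blocks of $\mathbf{\Lambda}^n$, the homological equation reduces to scalar division by $\mathrm{i}(\omega\cdot\ell + \lambda^n_{i,k} - \lambda^n_{j,k'})$, which is controlled on the Cantor set
\begin{equation*}
\Omega^{n+1}_\epsilon = \Bigl\{\omega\in\Omega^n_\epsilon : |\omega\cdot\ell + \lambda^n_{i,k} - \lambda^n_{j,k'}| \geq \tfrac{\alpha}{\langle \ell\rangle^\tau\langle i-j\rangle^\tau},\ |\ell|,|i-j|\leq N_n,\ (\ell,i-j)\neq 0\Bigr\}.
\end{equation*}
Assumption (A1) gives $|\lambda_{i,k}-\lambda_{j,k'}|\geq c_0|i-j|$ for $i\neq j$, and Lemma \ref{z1} keeps the Lipschitz dependence of $\lambda^n_{j,k}$ uniformly small, so the homological equation is solvable with $\|\Psi_n\|^{s,\mathcal{L}ip}_{s-m,s+m} \lesssim \alpha^{-1}N_n^{2\tau}\|\Pi_{N_n}\mathbf{P}^n\|^{s,\mathcal{L}ip}_{s-m,s+m}$, where the small-divisor loss is absorbed in $N_n^{2\tau}$ because high modes are cut. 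Lemma \ref{2.4} then bounds $\Phi_n^{\pm 1} = \mathrm{Id}+\Psi'_n$, and Lemma \ref{3.7} together with the tail bound \eqref{3.4} controls the new perturbation through the decomposition
\begin{equation*}
\mathbf{P}^{n+1} = \Pi_{N_n}^\perp\mathbf{P}^n + \bigl(e^{\mathrm{i}\Psi_n}\mathbf{\Lambda}^n e^{-\mathrm{i}\Psi_n} - \mathbf{\Lambda}^n - \mathrm{i}[\Psi_n,\mathbf{\Lambda}^n]\bigr) + \bigl(e^{\mathrm{i}\Psi_n}\mathbf{P}^n e^{-\mathrm{i}\Psi_n} - \mathbf{P}^n\bigr),
\end{equation*}
yielding the inductive estimate $\epsilon_{n+1}\leq C(\alpha^{-1}N_n^{2\tau}\epsilon_n^2 + N_n^{-\beta}\epsilon_0)$ with $\beta$ chosen large, which closes the scheme quadratically.

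The main obstacle will be the measure estimate for $\Omega_\epsilon = \bigcap_n \Omega^n_\epsilon$. Because the eigenvalues $\lambda^n_{j,k}$ grow only linearly in $j$ while the original perturbation has order $1/2$, a naive count of the resonant slices diverges; one must use the $m$-smoothing provided by Theorem \ref{3.0} to bound $|\lambda^n_{j,k}|^{\mathrm{lip}}$ uniformly in $j$, and then estimate each excised slice by $\alpha/(\langle\ell\rangle^\tau\langle i-j\rangle^\tau)$, the sum over the allowed $(\ell,i,j,k,k')$ (with the four sign choices $k,k'\in\{1,2\}$ coming from the $2\times 2$ block multiplicity) converging once $\tau>d+2$. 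This gives $\mathrm{meas}(\Omega_{0,\alpha}\setminus\Omega_\epsilon)\leq C\alpha$. Convergence of $\Phi_\infty = \lim \Phi_0\Phi_1\cdots\Phi_n$ in $\mathcal{L}(\mathrm{H}^r)$ finally follows from Lemma \ref{2.101} and Remark \ref{2.102}: the super-exponential decay of $\epsilon_n$ in the low norm together with the interpolation transferring the decay norm to the operator norm on $\mathrm{H}^r$, valid for $r\leq S-\beta-(d+1)/2$, gives $\|\Phi_\infty^{\pm 1}-\mathrm{Id}\|_{\mathcal{L}(\mathrm{H}^r)}\lesssim \epsilon_0$ and produces the block-diagonal limit $\mathbf{H}^\infty = \lim \mathbf{\Lambda}^n$.
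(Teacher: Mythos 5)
Your overall iteration scheme (quadratic KAM with block-diagonal normal form, Newton truncation at scales $N_n = N_0^{(3/2)^n}$, convergence of the composed conjugations, transfer from decay norms to operator norms via Lemma~\ref{2.101} and Remark~\ref{2.102}) matches the paper, but there is a concrete gap in the heart of the argument: the choice of non-resonance conditions, which is the one place where the linear growth of the eigenvalues and the $\tfrac12$-order of the perturbation bite.

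You impose
\[
|\omega\cdot\ell+\lambda^n_{i,k}-\lambda^n_{j,k'}|\;\geq\;\frac{\alpha}{\langle\ell\rangle^{\tau}\langle i-j\rangle^{\tau}},\qquad |\ell|,|i-j|\le N_n,
\]
and then claim that the excised slices sum to $O(\alpha)$ once $\tau>d+2$, appealing to the Lipschitz bound on the eigenvalues. This does not work. The Lipschitz bound on $\lambda^n_{j,k}$ gives transversality of the map $\omega\mapsto\omega\cdot\ell+\lambda^n_{i,k}-\lambda^n_{j,k'}$, hence controls the measure of \emph{each individual} resonant slice; it does nothing for summability. For fixed $h=i-j\le N_n$ there are infinitely many pairs $(i,j)$ with $|i-j|=h$, and because $\lambda_{j,k}=(j^2+\mathfrak m^2)^{1/2}+\varepsilon a\langle j\rangle^{1/2}+r_{j,k}$ grows only linearly, the centres $-(\lambda^n_{i,k}-\lambda^n_{j,k'})$ of these slices do not leave the compact window $|\omega\cdot\ell|\lesssim N_n$ as $i,j\to\infty$; they drift through it by amounts of order $\varepsilon a\,(i^{1/2}-j^{1/2})+O(\epsilon)$ that are not absorbed by the slice width $\alpha\langle\ell\rangle^{-\tau}\langle h\rangle^{-\tau}$. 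The sum $\sum_{|i-j|=h}\operatorname{meas}(R_{\ell ijkk'})$ diverges, so $\bigcap_n\Omega^n_\epsilon$ cannot be shown to have asymptotically full measure from this condition. (If the eigenvalues grew superlinearly, the resonances at fixed $h$ would be void for $i+j$ large and your condition would suffice; the linear growth is exactly what forbids this.)

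The paper resolves this by imposing the stronger condition \eqref{5.21}, with $\langle i\rangle^{\sigma}\langle j\rangle^{\sigma}$ (and $\sigma>1$) in the denominator rather than $\langle i-j\rangle^\tau$. Then $\operatorname{meas}(R_{\ell ijvv'})\lesssim \alpha\,N_k^{-\tau}\langle i\rangle^{-\sigma}\langle j\rangle^{-\sigma}$ is summable over \emph{all} $i,j\in\mathbb N$, and the union over $|\ell|\le N_k$ contributes $N_k^{d}$, giving $C\alpha N_k^{-1}$ for $\tau\ge d+1$ (Lemma~\ref{5.8}). The cost is that solving the homological equation now produces $\|\hat{\mathbf G}^{[i]}_{[j]}(\ell)\|\lesssim\alpha^{-1}N^{\tau+\sigma}\langle j\rangle^{2\sigma}\|\hat{\mathbf P}^{[i]}_{[j]}(\ell)\|$, so the generator $\mathbf G$ lands only in $\mathcal M^{s,\mathcal Lip}_{s\mp m,s\mp m}$, \emph{not} in the smoothing class $\mathcal M^{s,\mathcal Lip}_{s-m,s+m}$ where $\mathbf P$ lives. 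This is the real purpose of the $2m$-gain built into $\mathbf P^0$ via Theorem~\ref{3.0}: it must swallow the $\langle j\rangle^{4\sigma}$ loss, which forces $m\ge 2\sigma$ (the paper takes $m=2\sigma+2$, eq.~\eqref{5.45}), so that $\langle j\rangle^{4\sigma-2m}\le\langle j\rangle^{2m}$ and Lemma~\ref{3.7} keeps products like $\mathbf G\mathbf P$, $\mathbf P\mathbf G$, $\mathbf G[\mathbf G,\boldsymbol\Lambda]$ inside $\mathcal M^{s,\mathcal Lip}_{s-m,s+m}$. Your claimed estimate $\|\Psi_n\|^{s,\mathcal Lip}_{s-m,s+m}\lesssim\alpha^{-1}N_n^{2\tau}\|\Pi_{N_n}\mathbf P^n\|^{s,\mathcal Lip}_{s-m,s+m}$ is internally consistent only with your unworkable small divisors; once you use divisors strong enough for the measure estimate, the generator necessarily loses the gain, and one must argue, as the paper does, that this loss is tolerable precisely because $m\ge 2\sigma$. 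This interplay --- strengthening the small divisors to $\langle i\rangle^\sigma\langle j\rangle^\sigma$ for summability while using the pre-conditioning step to manufacture enough smoothing to pay for it --- is the key idea of Section~5 and is missing from your proposal.
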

The procedure of KAM iteration  is well known. For the convenience  of  reader, we show an outline of one step of the KAM reducibility.

Here, we conjugate the linear equation
$$\mathrm{i}\partial_{t}u=\mathbf{H}(t)u=\mathbf{\Lambda} u+\mathbf{P}(t) u$$
through a transformation $u=e^{-\mathrm{i}\mathbf{G}}v$, so that the new equation is
\begin{equation}
\mathrm{i}\partial_{t}v=\mathbf{H}^+(t)v,
\end{equation}
where
\begin{align}
		\label{eq:reducibility_1}
		\mathbf{H}^+(t) & = e^{\mathrm{i}\mathbf{G}(\omega t,\omega)}\mathbf{H}(t)e^{-\mathrm{i}\mathbf{G}(\omega t,\omega)}-\int^1_0 e^{\mathbf{i}s\mathbf{G}(\omega t,\omega)}\dot{\mathbf{G}}e^{-\mathbf{i}s\mathbf{G}(\omega t,\omega)}ds, \\
		\label{eq:reducibility 2}
		& \mathbf{H}^+= \mathbf{\Lambda} +\mathbf{i}[\mathbf{G},\mathbf{\Lambda}]+\Pi_{N}\mathbf{P} - \dot{\mathbf{G}} +\mathbf{P}^{+},
	\end{align}
and
\begin{align}
		\label{eq:reducibility_3}
\mathbf{P}^{+} =& e^{\mathrm{i}\mathbf{G}(\omega t,\omega)}\mathbf{\Lambda} e^{-\mathrm{i}\mathbf{G}(\omega t,\omega)}-(\mathbf{\Lambda}+\mathrm{i}[\mathbf{G},\mathbf{\Lambda}])+(e^{\mathrm{i}\mathbf{G}(\omega t,\omega)}\mathbf{P}
e^{-\mathrm{i}\mathbf{G}(\omega t,\omega)}-\mathbf{P}) \\
&-(\int^1_0e^{\mathrm{i}s\mathbf{G}(\omega t,\omega)}\dot{\mathbf{G}}
e^{-\mathrm{i}s\mathbf{G}(\omega t,\omega)}ds-\dot{\mathbf{G}}  )+\Pi^{\perp}_{N}\mathbf{P}.
\end{align}
Our goal is determine the operator $\mathbf{G}$ by solving the homological equation
\begin{equation}\label{1.2}
\omega \cdot \partial_{\theta}\mathbf{G}=\mathrm{i}[\mathbf{G},\mathbf{\Lambda}]+\Pi_{N}\mathbf{P}-\mathrm{diag}\big\{[\mathbf{P}^{[j]}_{[j]}](\omega)| j\in \mathbb{N}\big\}.
\end{equation}
Here $[\mathbf{P}^{[j]}_{[j]}](\omega)$ denotes
\begin{equation}
[\mathbf{P}^{[j]}_{[j]}](\omega)=\int_{\mathbb{T}^d}\mathbf{P}^{[j]}_{[j]}(\theta,\omega) d\theta.
\end{equation}

The new Hamiltonian is
\begin{align}
\mathbf{H}^{+}(t)=\mathbf{\Lambda}^{+}+\mathbf{P}^{+},\quad \mathbf{\Lambda}:=\mathrm{diag}\Big\{\Lambda_j(\omega)\Big|j\in \mathbb{N}\Big\}, \quad \mathbf{\Lambda}^{+}=\mathbf{\Lambda}+\mathrm{diag}\Big\{[\mathbf{P}^{[j]}_{[j]}](\omega)\Big| j\in \mathbb{N}\Big\}.
\end{align}

It is well known that the crucial of KAM iteration is to estimate the solution $\mathbf{G}$ of homological equation \eqref{1.2} . In order to deal with the  notorious small divisor, some non-resonance conditions on the eigenvalues of diagonal operator $\mathbf{\Lambda}$ are necessary.

Denoting $(\lambda^k_{j,v})_{v=1,2}$ as the eigenvalues of the  block $\Lambda_j$, we define the non-resonance  set $\Omega_{k+1,\alpha}(\omega)$ at the $k+1^{th}$ step KAM reducibility as
\begin{multline} \label{5.21}
\Omega_{k+1,\alpha}:=\Big\{\omega \in \Omega_{k,\alpha}:|\omega \cdot \ell+\lambda^{k}_{i,v}-\lambda^{k}_{j,v'}| \geq  \frac{\alpha}{N_k^{\tau}\langle i \rangle^{\sigma} \langle j \rangle^{\sigma}}, \\
\forall i,j\in\mathbb{ N}, \quad |\ell| \leq N_k, \quad v, v'=1,2, \quad (\ell,i,j) \neq (0,i,i)
\Big\}.
\end{multline}

In the following section, we will  estimate the solution $\mathbf{G}^{k+1}$ of homological equation \eqref{1.2} and the new perturbation $\mathbf{P}^{k+1}$ in the KAM procedure.
\subsection{The homological equation}
\begin{lem}\label{5.111}For any $\omega \in \Omega_{k+1,\alpha} $ and  $s\in[s_0, S-\beta]$, the homological equation
\begin{equation}\label{4.1}
\omega \cdot \partial_{\theta}  \mathbf{G}^{k+1}+\mathrm{i}[\mathbf{\Lambda}^k,\mathbf{G}^{k+1}]=\Pi_{N^k}\mathbf{P}^k- \mathrm{diag}[\mathbf{P}^k]
\end{equation}
has a solution $\mathbf{G}^{k+1}$ defined on  $\Omega_{k+1,\alpha}$ with
\begin{equation}\label{5.3}
\|\mathbf{G}^{k+1}\|^{s,\mathcal{L}ip}_{s\mp m,s\mp m} \lesssim N^{2\tau+2\sigma+2}_{k} \|\mathbf{P}^k\|^{s,\mathcal{L}ip}_{s- m,s+ m},
\end{equation}
\begin{equation}\label{5.4}
\|\mathbf{G}^{k+1}\|^{s+\beta,\mathcal{L}ip}_{s+\beta\mp m,s+\beta\mp m} \lesssim N^{2\tau+2\sigma+2}_{k} \|\mathbf{P}^k\|^{s+\beta,\mathcal{L}ip}_{s+\beta- m,s+\beta+ m}.
\end{equation}
\end{lem}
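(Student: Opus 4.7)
The plan is to solve the homological equation \eqref{4.1} entry-by-entry in the Fourier--block representation of the operators, and then reassemble the $s$-decay norm via Definition~\ref{3.40}. Since $\mathbf{\Lambda}^k$ is block-diagonal with $2\times 2$ Hermitian diagonal blocks (thanks to \textbf{(A1)} and its preservation along the iteration), each block unitarily diagonalises as $\Lambda^k_j=U^k_j\,\mathrm{diag}(\lambda^k_{j,1},\lambda^k_{j,2})\,(U^k_j)^{*}$ with $U^k_j$ a $2\times 2$ unitary matrix that is Lipschitz in $\omega$. Expanding $\mathbf{G}^{k+1}$ and $\mathbf{P}^k$ as Fourier series in $\theta$ and taking block matrix elements $(\cdot)_{[j]}^{[i]}(\ell)$, the equation \eqref{4.1} becomes a family of $2\times 2$ matrix Sylvester equations
\[
\mathrm{i}(\omega\cdot\ell)\,\widehat{\mathbf{G}}^{k+1,[i]}_{[j]}(\ell)+\mathrm{i}\Lambda^k_i\widehat{\mathbf{G}}^{k+1,[i]}_{[j]}(\ell)-\mathrm{i}\widehat{\mathbf{G}}^{k+1,[i]}_{[j]}(\ell)\Lambda^k_j=\widehat{\mathbf{P}}^{k,[i]}_{[j]}(\ell)
\]
for $|\ell|<N_k$, $|i-j|<N_k$ and $(\ell,i,j)\neq(0,i,i)$, with all other coefficients of $\mathbf{G}^{k+1}$ set to zero (the diagonal block being killed by $-\mathrm{diag}[\mathbf{P}^k]$ and the high-mode block by $\Pi_{N^k}$).

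Conjugating by $(U^k_i)^{*}(\cdot)U^k_j$ decouples this system into four scalar equations
\[
\mathrm{i}(\omega\cdot\ell+\lambda^k_{i,v}-\lambda^k_{j,v'})\,\tilde g_{v,v'}=\tilde p_{v,v'},\qquad v,v'\in\{1,2\}.
\]
On $\Omega_{k+1,\alpha}$ the non-resonance inequality \eqref{5.21} then gives the pointwise bound
\[
\|\widehat{\mathbf{G}}^{k+1,[i]}_{[j]}(\ell)\|\leq\frac{C\,N_k^{\tau}\langle i\rangle^{\sigma}\langle j\rangle^{\sigma}}{\alpha}\,\|\widehat{\mathbf{P}}^{k,[i]}_{[j]}(\ell)\|,
\]
the unitarity of $U^k_i,U^k_j$ ensuring that passing back to the original basis costs nothing. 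For the Lipschitz dependence on $\omega$ I would use the standard decomposition
\[
\tilde g(\omega_1)-\tilde g(\omega_2)=\frac{\tilde p(\omega_1)-\tilde p(\omega_2)}{\mathrm{i}\,d(\omega_1)}-\tilde p(\omega_2)\,\frac{d(\omega_1)-d(\omega_2)}{\mathrm{i}\,d(\omega_1)\,d(\omega_2)}
\]
with $d(\omega)=\omega\cdot\ell+\lambda^k_{i,v}(\omega)-\lambda^k_{j,v'}(\omega)$; the eigenvalue Lipschitz constant $1/8$ from \textbf{(A1)} together with $|\ell|<N_k$ gives $|d(\omega_1)-d(\omega_2)|\lesssim N_k|\omega_1-\omega_2|$, and two applications of the non-resonance lower bound produce the extra factor responsible for the ``$+2$'' in the exponent $N_k^{2\tau+2\sigma+2}$.

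Finally, I would square the pointwise bounds, multiply by the weights $\langle\ell,h\rangle^{2s}$ and the appropriate $\langle i\rangle^{\pm 2m}$, $\langle j\rangle^{\mp 2m}$ demanded by Definition~\ref{3.40}, and sum over $\ell,h,i,j$. The cut-off $|i-j|<N_k$ built into $\Pi_{N^k}$ enforces $\langle i\rangle\lesssim N_k\langle j\rangle$ and vice versa, and this is the tool that lets one trade the asymmetric weights of $\|\cdot\|^{s}_{s-m,s+m}$ on the $\mathbf{P}$-side for the symmetric weights of $\|\cdot\|^{s}_{s\mp m,s\mp m}$ on the $\mathbf{G}$-side at the price of additional powers of $N_k$, which are collected into the claimed exponent. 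The higher-regularity estimate \eqref{5.4} is obtained by running the same computation with $s$ replaced by $s+\beta$ throughout. The main obstacle I anticipate is this last book-keeping step: carefully tracking the interaction between the eigenvalue-growth factor $\langle i\rangle^{\sigma}\langle j\rangle^{\sigma}$ produced by the small divisors and the asymmetric smoothing weights of the norms, and verifying that the localisation $|i-j|<N_k$ together with the regularisation performed in Section~4 (which makes $\mathbf{P}^k$ smoothing of order $m$) genuinely allows one to absorb everything into $N_k^{2\tau+2\sigma+2}$ without any loss in the regularity index~$s$.
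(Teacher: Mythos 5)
Your overall strategy—Fourier-expand in $\theta$, view the block homological equation as a family of $2\times2$ Sylvester equations, extract a pointwise bound from the non-resonance condition \eqref{5.21}, and then re-sum against the weights of Definition~\ref{3.40}—is the same as the paper's, and the bookkeeping observations about $m=2\sigma+2$ and the cut-off $|i-j|<N_k$ are the right ones. However, there is a genuine flaw in the way you propose to reduce the Sylvester equation to scalar divisor equations.

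You conjugate by the block-diagonalising unitary $U^k_j$ and claim it is ``Lipschitz in $\omega$''. That is not available here, and cannot be made available. The two eigenvalues $\lambda^k_{j,1}$ and $\lambda^k_{j,2}$ inside the same block are not separated by any non-resonance condition: the set \eqref{5.21} explicitly excludes $(\ell,i,j)=(0,i,i)$, so nothing prevents $\lambda^k_{j,1}=\lambda^k_{j,2}$, and in fact at $\varepsilon=0$ the eigenvalues of $(-\partial_{xx}+\mathfrak m^2)^{1/2}$ on $E_j$ are doubly degenerate, which is the entire reason this KAM scheme produces $2\times2$ blocks $\Lambda^\infty_j$ in the limit rather than scalars. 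When eigenvalues of a Hermitian family touch, the eigenprojections and the diagonalising unitary need not depend even continuously on the parameter, let alone Lipschitz-continuously, and consequently $\tilde p_{v,v'}=(U^k_i)^*\widehat{\mathbf P}^{k}U^k_j$ and $\tilde g_{v,v'}$ cannot be differenced in $\omega$ the way you want in the second half of your argument. The sup bound survives because a fixed-$\omega$ diagonalisation is always available, but the Lipschitz estimate breaks.

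The paper avoids this entirely by not diagonalising: it applies the Sylvester-equation bound of Proposition~\ref{8.7} (Bhatia, Theorem VII.2.8) directly to the block equation
\begin{equation*}
\mathrm{i}\,\omega\cdot\ell\,\hat{\mathbf G}^{[i]}_{[j]}(\ell)+\mathrm{i}\,\Lambda_i\hat{\mathbf G}^{[i]}_{[j]}(\ell)-\mathrm{i}\,\hat{\mathbf G}^{[i]}_{[j]}(\ell)\Lambda_j=\hat{\mathbf P}^{[i]}_{[j]}(\ell),
\end{equation*}
which only needs the distance between $\sigma(\omega\cdot\ell\cdot\mathrm{Id}+\Lambda_i)$ and $\sigma(\Lambda_j)$, and this distance is exactly what \eqref{5.21} bounds from below. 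For the Lipschitz part, it applies the difference operator $\Delta$ to this same matrix equation, producing a Sylvester equation for $\Delta\hat{\mathbf G}$ with right-hand side $\Delta\hat{\mathbf P}-\mathrm i\,\Delta\omega\cdot\ell\,\hat{\mathbf G}-\mathrm i(\Delta\Lambda_i)\hat{\mathbf G}+\mathrm i\,\hat{\mathbf G}(\Delta\Lambda_j)$, and solves it again via Proposition~\ref{8.7}. No eigenbasis is needed at any point. You should replace your scalar decoupling by this Hermitian Sylvester-equation bound; once you do, your divisor and weight computations go through as intended.
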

\begin{proof}
For notation simplicity, we rename $\mathbf{\Lambda}^k,\mathbf{G}^{k+1},\mathbf{P}^k,\lambda^{k}_i,N_k$ as $\mathbf{\Lambda},\mathbf{G},\mathbf{P},\lambda_i,N$. Considering the matrix representation  and Fourier coefficients of these linear operators, the homological equation \eqref{4.1} is equivalent to
\begin{equation}\label{5.2}
\mathrm{i}\omega \cdot \ell \hat{\mathbf{G}}^{[i]}_{[j]}(\ell)+\mathrm{i}\Lambda_i \hat{\mathbf{G}}^{[i]}_{[j]}(\ell)- \mathrm{i} \hat{\mathbf{G}}^{[i]}_{[j]}(\ell)\Lambda_j=\hat{\mathbf{P}}^{[i]}_{[j]}(\ell), \quad  \forall  |i-j|<N, \ |\ell|<N, \ (\ell,i,j)\neq (0,i,i).
\end{equation}
and  $\hat{\mathbf{G}}^{[i]}_{[i]}(0)=0$.

From \eqref{5.21}  and Prop \ref{8.7}, for any $|i-j|<N$, one has
\begin{equation}
\begin{split}
\|\hat{\mathbf{G}}^{[i]}_{[j]}(\ell)\| &\lesssim \frac{\|\hat{\mathbf{P}}^{[i]}_{[j]}(\ell)\|N^{\tau} \langle i\rangle^{\sigma}\langle j\rangle^{\sigma}}{\alpha}\\
&\lesssim \alpha^{-1} \|\hat{\mathbf{P}}^{[i]}_{[j]}(\ell)\| N^{\tau} \langle j\rangle^{\sigma}(\langle j\rangle^{\sigma}+|i-j|^{\sigma})\\
&\lesssim  \alpha^{-1} \|\hat{\mathbf{P}}^{[i]}_{[j]}(\ell)\|N^{\tau}\langle j\rangle^{\sigma}(\langle j\rangle^{\sigma}+N^{\sigma})\\
&\lesssim  \alpha^{-1} \|\hat{\mathbf{P}}^{[i]}_{[j]}(\ell)\|N^{\tau+\sigma}\langle j\rangle^{2\sigma}.
\end{split}
\end{equation}

From the definition of the norm $\|\cdot\|^{s}_{s+ m,s+ m}$, we can get
\begin{align}
\big(\|\mathbf{G}\|^{s}_{s+ m,s+ m}\big)^2 &= \sum_{\ell \in \Z^d,  h \in \mathbb{N }}\langle \ell,h  \rangle^{2s}   \sup_{|i-j|=h} \|\mathbf{G}^{[i]}_{[j]}(\ell)\|^2\langle j\rangle^{-2m} \langle i \rangle^{2m} \\
&\lesssim \alpha^{-2}N^{2\sigma+2\tau} \sum_{\ell \in \Z^d,  h \in \mathbb{N }}\langle \ell,h  \rangle^{2s} \sup_{|i-j|=h} \|\hat{\mathbf{P}}^{[i]}_{[j]}(\ell)\|^2\langle i\rangle^{2m} \langle j\rangle^{4\sigma} \langle j \rangle^{-2m} \\
&\label{5.33}\lesssim \alpha^{-2}N^{2\sigma+2\tau} \sum_{\ell \in \Z^d,  h \in \mathbb{N }}\langle \ell,h  \rangle^{2s} \sup_{|i-j|=h} \|\hat{\mathbf{P}}^{[i]}_{[j]}(\ell)\|^2 \langle i\rangle ^{2m} \langle j \rangle^{2m} \\
&\lesssim \alpha^{-2}N^{2\sigma+2\tau}\big(\|\mathbf{P}\|^{s}_{s- m,s+ m}\big)^2
\end{align}
The  inequality \eqref{5.33} is valid, because $\sigma \leq m$ and  $4\sigma -2m \leq 2m$.  By the same way, we also have
$$\big(\|\mathbf{G}\|^{s}_{s- m,s-m}\big)^2 \lesssim \alpha^{-2}N^{2\sigma+2\tau}\big(\|\mathbf{P}\|^{s}_{s- m,s+ m}\big)^2. $$
There is no difference in estimating  $\|\mathbf{G}\|^{s+\beta}_{s+\beta\mp m,s+\beta\mp m}$ with $\|\mathbf{G}\|^{s}_{s\mp m,s\mp m}.$

Regarding the Lipschitz semi-norm of $\mathbf{G}$, we introduce the difference operator $\Delta$. Given the operator $\mathbf{G}$ of $\omega$, we set $\Delta\mathbf{G}=\mathbf{G}(\omega_1)-\mathbf{G}(\omega_2)$. Applying the difference operator $\Delta$ to equation  \eqref{5.2}, we have
\begin{equation}
\begin{split}
\mathrm{i}\omega \cdot \ell (\Delta \hat{\mathbf{G}}^{[i]}_{[j]}(\ell))+\mathrm{i}\Lambda_i (\Delta\hat{\mathbf{G}}^{[i]}_{[j]}(\ell))- \mathrm{i} (\Delta\hat{\mathbf{G}}^{[i]}_{[j]}(\ell)) \Lambda_j=&\Delta(\hat{\mathbf{P}}^{[i]}_{[j]}(\ell))-\mathrm{i}\Delta \omega \cdot \ell \hat{\mathbf{G}}^{[i]}_{[j]}(\ell)\\
&-\mathrm{i}(\Delta\Lambda_i)\hat{\mathbf{G}}^{[i]}_{[j]}(\ell)+ \mathrm{i}\hat{\mathbf{G}}^{[i]}_{[j]}(\ell)(\Delta \Lambda_j).
\end{split}
\end{equation}
Applying Prop \ref{8.7} again, we have
\begin{align}
\frac{\|\Delta  \hat{\mathbf{G}}^{[i]}_{[j]}(\ell)\|}{|\Delta \omega|} \lesssim &\frac{N^{\tau} \langle i\rangle^{\sigma}  \langle j\rangle^{\sigma} }{\alpha} \Big( \frac{\|\Delta \hat{\mathbf{P}}^{[i]}_{[j]}(\ell)\|}{|\Delta \omega|}
+\|\hat{\mathbf{G}}^{[i]}_{[j]}(\ell)\|\langle \ell \rangle+\|\hat{\mathbf{G}}^{[i]}_{[j]}(\ell)\|\langle i+j\rangle \Big)\\
\lesssim & \frac{N^{\tau} \langle i\rangle^{\sigma}  \langle j\rangle^{\sigma} }{\alpha}\frac{\|\Delta \hat{\mathbf{P}}^{[i]}_{[j]}(\ell)\|}{|\Delta \omega|}+\frac{N^{2\tau+1}\langle i\rangle^{2\sigma+1}  \langle j\rangle^{2\sigma+1}}{\alpha^2}\| \hat{\mathbf{P}}^{[i]}_{[j]}(\ell)\|.
\end{align}
Now, we can get

\begin{equation}\label{5.5}
\frac{\|\Delta  \mathbf{G}\|^{s}_{s\mp m,s\mp m}}{|\Delta \omega|} \lesssim \frac{N^{\tau+\sigma}}{\alpha} \frac{\|\Delta\mathbf{P}\|^{s}_{s- m,s+ m}}{|\Delta \omega|}+\frac{N^{2\tau+2\sigma+2}}{\alpha^2}\|\mathbf{P}\|^{s}_{s- m,s+ m}.
\end{equation}
It is same to consider  $\frac{\|\Delta  \mathbf{G}\|^{s+\beta}_{s+\beta\mp m,s+\beta\mp m}}{|\Delta  \omega |}$.
Respectively, we can get \eqref{5.3} and \eqref{5.4}.

\end{proof}
Next, we consider the new perturbation $\mathbf{P}^{k+1}$.
\begin{lem}\label{5.112}
Assuming that $C(s)\|\mathbf{P}^k\|^{s,\mathcal{L}ip}_{s-m,s+ m}\leq \frac{1}{2}$, the new perturbation $\mathbf{P}^{k+1}$ is defined on $\Omega_{k+1,\alpha}$, and satisfies the following quantities bounds:
\begin{equation}\label{6.1}
\|\mathbf{P}^{k+1}\|^{s,\mathcal{L}ip}_{s-m,s+ m} \leq C\Big(N^{-\beta}_k\|\mathbf{P}^k\|^{s+\beta,\mathcal{L}ip}_{s+\beta-m,s+\beta +m}+N^{2\tau+2\sigma+2}_k  (\|\mathbf{P}^k\|^{s,\mathcal{L}ip}_{s-m,s+ m})^2\Big),
\end{equation}
\begin{equation}\label{6.2}
\|\mathbf{P}^{k+1}\|^{s+\beta,\mathcal{L}ip}_{s+\beta-m,s+\beta+ m} \leq C\Big( \|\mathbf{P}^{k}\|^{s+\beta,\mathcal{L}ip}_{s+\beta-m,s+\beta+ m} +N^{2\tau+2\sigma+2}_k  \|\mathbf{P}^k\|^{s,\mathcal{L}ip}_{s-m,s+ m}\|\mathbf{P}^k\|^{s+\beta,\mathcal{L}ip}_{s+\beta-m,s+\beta+ m}\Big).
\end{equation}
Here, $C$ is a constant depending on $s,m,\sigma,\tau$.
\end{lem}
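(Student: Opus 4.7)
The plan is to bound the four constituent pieces of $\mathbf{P}^{k+1}$ exhibited in \eqref{eq:reducibility_3}, namely
$$I_1=e^{\mathrm{i}\mathbf{G}^{k+1}}\mathbf{\Lambda}^{k}e^{-\mathrm{i}\mathbf{G}^{k+1}}-\mathbf{\Lambda}^k-\mathrm{i}[\mathbf{G}^{k+1},\mathbf{\Lambda}^k],\qquad I_2=e^{\mathrm{i}\mathbf{G}^{k+1}}\mathbf{P}^{k}e^{-\mathrm{i}\mathbf{G}^{k+1}}-\mathbf{P}^k,$$
$$I_3=\int_0^1 e^{\mathrm{i}s\mathbf{G}^{k+1}}\dot{\mathbf{G}}^{k+1}e^{-\mathrm{i}s\mathbf{G}^{k+1}}\,ds-\dot{\mathbf{G}}^{k+1},\qquad I_4=\Pi_{N_k}^{\perp}\mathbf{P}^k,$$
separately. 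First I handle $I_4$ by Lemma \ref{2.1} (transferred to the mixed decay norm through the identification of Remark \ref{2.3}): this supplies the crucial smoothing factor $\|I_4\|^{s,\mathcal{L}ip}_{s-m,s+m}\leq C N_k^{-\beta}\|\mathbf{P}^k\|^{s+\beta,\mathcal{L}ip}_{s+\beta-m,s+\beta+m}$ for the low norm and the plain bound $\|I_4\|^{s+\beta,\mathcal{L}ip}_{s+\beta-m,s+\beta+m}\leq \|\mathbf{P}^k\|^{s+\beta,\mathcal{L}ip}_{s+\beta-m,s+\beta+m}$ for the high norm, accounting for the first summand in \eqref{6.1} and in \eqref{6.2}.

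Next I exhibit $I_1,I_2,I_3$ as \emph{quadratic} expressions in $\mathbf{G}^{k+1}$ and $\mathbf{P}^k$. Using the homological equation \eqref{4.1} in the equivalent form $\mathrm{i}[\mathbf{G}^{k+1},\mathbf{\Lambda}^k]=\dot{\mathbf{G}}^{k+1}-\Pi_{N_k}\mathbf{P}^k+\mathrm{diag}[\mathbf{P}^k]$, a direct integration-by-parts in the Taylor remainder gives
$$I_1+I_3=\int_0^1\!\Big[(\Pi_{N_k}\mathbf{P}^k-\mathrm{diag}[\mathbf{P}^k])-e^{\mathrm{i}s\mathbf{G}^{k+1}}(\Pi_{N_k}\mathbf{P}^k-\mathrm{diag}[\mathbf{P}^k])e^{-\mathrm{i}s\mathbf{G}^{k+1}}\Big]ds,$$
while $I_2=\int_0^1 e^{\mathrm{i}s\mathbf{G}^{k+1}}\mathrm{i}[\mathbf{G}^{k+1},\mathbf{P}^k]e^{-\mathrm{i}s\mathbf{G}^{k+1}}ds$. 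Every summand is thus a conjugated commutator, quadratic in $\mathbf{G}^{k+1}$ and $\mathbf{P}^k$, and $\mathbf{\Lambda}^k$ no longer appears.

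I then estimate each commutator by Lemma \ref{3.7} with the intermediate index $l$ equal to $-m$ or $+m$. Concretely, to bound $[\mathbf{G}^{k+1},\mathbf{P}^k]$ in $\|\cdot\|^{s,\mathcal{L}ip}_{s-m,s+m}$, I compose $\mathbf{G}^{k+1}\in\mathcal{M}^s_{s-m,s-m}$ with $\mathbf{P}^k\in\mathcal{M}^s_{s-m,s+m}$ (taking $l=-m$) on one side and $\mathbf{P}^k\in\mathcal{M}^s_{s-m,s+m}$ with $\mathbf{G}^{k+1}\in\mathcal{M}^s_{s+m,s+m}$ (taking $l=+m$) on the other; Lemma \ref{3.7} yields a tame bound of the shape $\|\mathbf{G}^{k+1}\|^{s,\mathcal{L}ip}_{s\mp m,s\mp m}\|\mathbf{P}^k\|^{s_0,\mathcal{L}ip}_{s_0-m,s_0+m}+\|\mathbf{G}^{k+1}\|^{s_0,\mathcal{L}ip}_{s_0\mp m,s_0\mp m}\|\mathbf{P}^k\|^{s,\mathcal{L}ip}_{s-m,s+m}$. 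The outer conjugation by $e^{\pm\mathrm{i}s\mathbf{G}^{k+1}}$ is handled by Lemma \ref{2.4}, which under the smallness assumption $C(s)\|\mathbf{P}^k\|^{s_0,\mathcal{L}ip}_{s_0-m,s_0+m}\leq 1/2$ (propagated to $\mathbf{G}^{k+1}$ through Lemma \ref{5.111}) gives $\|e^{\pm\mathrm{i}s\mathbf{G}^{k+1}}-\mathrm{Id}\|^{s,\mathcal{L}ip}_{s\pm m,s\pm m}\leq C\|\mathbf{G}^{k+1}\|^{s,\mathcal{L}ip}_{s\pm m,s\pm m}$, so that the conjugation contributes only bounded multiplicative constants. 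Substituting the homological estimate $\|\mathbf{G}^{k+1}\|^{s,\mathcal{L}ip}_{s\mp m,s\mp m}\lesssim N_k^{2\tau+2\sigma+2}\|\mathbf{P}^k\|^{s,\mathcal{L}ip}_{s-m,s+m}$ of Lemma \ref{5.111} produces the quadratic term $N_k^{2\tau+2\sigma+2}(\|\mathbf{P}^k\|^{s,\mathcal{L}ip}_{s-m,s+m})^2$ in \eqref{6.1}, and its analogue $N_k^{2\tau+2\sigma+2}\|\mathbf{P}^k\|^{s,\mathcal{L}ip}_{s-m,s+m}\|\mathbf{P}^k\|^{s+\beta,\mathcal{L}ip}_{s+\beta-m,s+\beta+m}$ in \eqref{6.2} (obtained by repeating the argument with $s$ replaced by $s+\beta$ on exactly one of the two factors).

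The main technical obstacle is the asymmetric bookkeeping imposed by the mixed decay norm $\|\cdot\|^s_{s-m,s+m}$: because $\mathbf{G}^{k+1}$ must sit on either side of $\mathbf{P}^k$, one has to pick the intermediate index in Lemma \ref{3.7} so that only the norms $\|\mathbf{G}^{k+1}\|^s_{s\mp m,s\mp m}$ appear, and never the \emph{a priori} unavailable $\|\mathbf{G}^{k+1}\|^s_{s+m,s-m}$. This is precisely why Lemma \ref{5.111} provides the estimate with both signs. Ensuring that repeated applications of the exponential conjugation still map $\mathcal{M}^{s,\mathcal{L}ip}_{s-m,s+m}$ into itself requires chaining several uses of Lemma \ref{3.7} with alternating signs of $m$; once this bookkeeping is set up, the remainder of the proof reduces to assembling the tame product bounds.
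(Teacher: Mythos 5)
Your proposal follows essentially the same route as the paper: you decompose $\mathbf{P}^{k+1}$ into the smoothing tail $\Pi_{N_k}^{\perp}\mathbf{P}^k$ plus conjugated commutators, use the homological equation to cancel $\mathbf{\Lambda}^k$ (the paper records this in a single formula, as $\int_0^1 e^{\mathrm{i}s\mathbf{G}^{k+1}}\mathrm{i}[\mathbf{G}^{k+1},\mathbf{P}^k]e^{-\mathrm{i}s\mathbf{G}^{k+1}}ds$ plus a Taylor remainder involving $[\mathbf{G}^{k+1},\mathrm{diag}[\mathbf{P}^k]-\Pi_{N_k}\mathbf{P}^k]$, which is algebraically the same as your $I_1,I_2,I_3$ manipulation), and then estimate via Lemmas \ref{2.1}, \ref{3.7}, \ref{5.111}, and \ref{2.4} exactly as in the paper. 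The only slip is notational: with your sign convention for $I_3$, the combination that appears in \eqref{eq:reducibility_3} is $I_1 - I_3$ rather than $I_1 + I_3$, though the displayed integral formula is nevertheless correct and the norm bound is unaffected.
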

\begin{proof}
Recall the definition  of $\mathbf{P}^{k+1}$, we have
\begin{align}
\mathbf{P}^{k+1}=\Pi^{\perp}_{N_k}\mathbf{P}^{k}&+\int^{1}_{0}e^{\mathbf{i}s\mathbf{G}^{k+1}}\mathrm{i}[\mathbf{G}^{k+1},\mathbf{P}^{k}]e^{-\mathbf{i}s\mathbf{G}^{k+1}}ds\\
&+\int^{1}_{0} (1-s)e^{\mathbf{i}s\mathbf{G}^{k+1}}\mathrm{i}[\mathbf{G}^{k+1},[\mathbf{P}^{k}]-\Pi_{N_k}\mathbf{P}^{k}]e^{-\mathbf{i}s\mathbf{G}^{k+1}}ds.
\end{align}
The Lemma \ref{2.1} implies that
\begin{equation}
\|\Pi^{\perp}_{N_k}\mathbf{P}^{k}\|^{s,\mathcal{L}ip}_{s-m,s+m} \leq N_{k}^{-\beta}\|\mathbf{P}^{k}\|^{s+\beta,\mathcal{L}ip}_{s+\beta-m,s+\beta+ m}.
\end{equation}
From Lemma \ref{3.3} and \eqref{5.3}, we can get
\begin{equation}
\|[\mathbf{G}^{k+1},\mathbf{P}^k]\|^{s,\mathcal{L}ip}_{s-m,s+m} \lesssim N^{2\tau+2\sigma+2}_k\Big(  \|\mathbf{P}^{k}\|^{s,\mathcal{L}ip}_{s-m,s+m} \Big)^2,
\end{equation}
and
\begin{equation}
\|[\mathbf{G}^{k+1},\mathbf{P}^k]\|^{s+\beta,\mathcal{L}ip}_{s+\beta-m,s+\beta+m} \lesssim N^{2\tau+2\sigma+2}_k \|\mathbf{P}^{k}\|^{s,\mathcal{L}ip}_{s-m,s+m} \|\mathbf{P}^{k}\|^{s+\beta,\mathcal{L}ip}_{s+\beta-m,s+\beta+m}.
\end{equation}

The estimation of $[\mathbf{G}^{k+1},[\mathbf{P}^{k}]-\Pi_{N_k}\mathbf{P}^{k}]$ is same with $[\mathbf{G}^{k+1},\mathbf{P}^k]$.
Summing up the contribution of these operators and using Lemma \ref{2.4}, we can obtain  \eqref{6.1} and \eqref{6.2} respectively.
\end{proof}

\subsection{Proof of the reducibility theorem }\
\subsubsection{Iterative Lemma}\

The proof of the theorem \ref{5.30} is heavily  depending on the following iterative lemma. Some constants should be fixed before the following lemma. Given $\tau >d+1,\sigma>1$, we fix
\begin{equation}\label{5.45}
s_0=\frac{d+3}{2}, \quad m=2\sigma+2, \quad \alpha=6\tau+6\sigma+7, \quad \beta=\alpha+1.
\end{equation}
Moreover, we fix the scale on which we perform the reducibility scheme as
\begin{equation}\label{5.46}
N_{k}=(N_0)^{(\frac{3}{2})^k}, \quad \forall k \in \mathbb{N} , \quad N_{-1}=1.
\end{equation}

\begin{prop}\label{5.6}(Iterative Lemma)Let $s \in [s_0,S-\beta]$.  There exists  $C(s)>0$ and $N_0:= N_0(s)>1$ such that if
\begin{equation}
C(s)N_0^{2\tau+2\sigma+2+\alpha}\epsilon  \leq \frac{1}{2},
\end{equation}
 we can recursively define a family of non-resonance set $\{\Omega_{n}\}_{n \geq 0}$. For any $\omega \in \Omega_{n}$, we can iteratively define a Lipschitz family of linear operator
\begin{equation}
\mathcal{L}_{n}=\mathrm{i} \omega \cdot \partial_{\theta}-\mathbf{\Lambda}^n-\mathbf{P}^n, \quad n\geq 0,
\end{equation}
such that the followging items hold true for any $n\geq 0$:\\
\textbf{(A)}: For any $n\geq 1$, there exists  a Lipschitz family transformation operator $e^{-\mathrm{i}\mathbf{G}^{n}}$ defined on $\Omega_{n}$, which conjugate the linear operator $\mathcal{L}_{n-1}$ to
\begin{equation}
\mathcal{L}_{n}=e^{\mathrm{i}\mathbf{G}^{n}}\mathcal{L}_{n-1}e^{-\mathrm{i}\mathbf{G}^{n}}.
\end{equation}
Moreover, for any $s \in [s_0,S-\beta]$
\begin{equation}
\|\mathbf{G}^{n}\|^{s,\mathcal{L}ip}_{s\pm m,s\pm m} \leq C_{\star} N^{2\tau+2\sigma+2}_{n-1} N^{-\alpha}_{n-2}\epsilon.
\end{equation}
\textbf{(B)}:  $\mathbf{\Lambda}^{n}$ is   block diagonal and time independent. Denoting  $(\lambda^{n}_{j,v})_{v=1,2}$ as the  eigenvalues of block $\Lambda^{n}_j$, for any $\omega \in \Omega_{n}$, there exists a positive constant $c_0$ such that
\begin{equation}
|\lambda^{n}_{i,v}-\lambda^{n}_{j,v'}| \geq \frac{c_0}{2}|i-j|,\  \forall i \neq j, \ \ v,v'=1,2,
\end{equation}
and
\begin{equation}\label{5.51}
|\lambda^{n}_{j,v}|^{lip} \leq \|\Lambda^{n}_j\|^{lip} \leq \frac{1}{4}.
\end{equation}
\textbf{(C)}:For any $s \in [s_0, S-\beta]$, the perturbation $\mathbf{P}^{n}$ is defined on $\Omega_{n}$, and satisfies
\begin{equation}
\|\mathbf{P}^{n}\|^{s,\mathcal{L}ip}_{s-m,s+m} \leq C_{\ast}N_{n-1}^{-\alpha}\epsilon
\end{equation}
\begin{equation}
\|\mathbf{P}^{n}\|^{s+\beta,\mathcal{L}ip}_{s+\beta-m,s+\beta+m} \leq C_{\ast}N_{n-1}
\end{equation}
The constant $C_{\ast}$  depends on $m,\sigma,\tau, s,d$.
  \end{prop}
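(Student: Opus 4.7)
The plan is to prove Proposition \ref{5.6} by induction on $n$, using the homological equation analysis of Lemma \ref{5.111} together with the quadratic/smoothing estimates of Lemma \ref{5.112}, and controlling the eigenvalue perturbations at each step via a matrix perturbation argument.

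For the base case $n=0$ the claims (A), (B), (C) follow directly from assumptions \textbf{(A1)}, \textbf{(A2)}: (A) is vacuous, the eigenvalue separation $c_0|i-j|$ and Lipschitz bound $1/8\le 1/4$ are exactly (A1) (the constant is $1/8$ so we have a $1/8$ margin for future perturbations), and (C) at $n=0$ gives $\|\mathbf{P}^0\|^{s,\mathcal{L}ip}_{s-m,s+m}\le\epsilon=\epsilon_0$ by definition, while the high-norm bound holds if we simply take $C_\ast$ large enough to absorb $\|\mathbf{P}^0\|^{s+\beta,\mathcal{L}ip}_{s+\beta-m,s+\beta+m}$ (noting $N_{-1}=1$).

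For the inductive step, assume (A),(B),(C) at level $n$. First I would verify that $\omega\in\Omega_{n+1,\alpha}$ satisfies the hypotheses of Lemma \ref{5.111} (the non-resonance conditions are exactly those imposed in the definition \eqref{5.21}), so the homological equation produces $\mathbf{G}^{n+1}$ with
\begin{equation*}
\|\mathbf{G}^{n+1}\|^{s,\mathcal{L}ip}_{s\pm m,s\pm m}\lesssim N_n^{2\tau+2\sigma+2}\|\mathbf{P}^n\|^{s,\mathcal{L}ip}_{s-m,s+m}\le C_\star N_n^{2\tau+2\sigma+2}N_{n-1}^{-\alpha}\epsilon,
\end{equation*}
which is the bound claimed in (A). The smallness $C(s)\|\mathbf{G}^{n+1}\|^{s_0,\mathcal{L}ip}_{s_0\pm m,s_0\pm m}\le 1/2$ needed to apply Lemma \ref{2.4} and invert $e^{\mathrm{i}\mathbf{G}^{n+1}}$ follows from the assumption $C(s)N_0^{2\tau+2\sigma+2+\alpha}\epsilon\le 1/2$ together with the rapid-decay of the iterates; writing out $N_n^{2\tau+2\sigma+2}N_{n-1}^{-\alpha}\epsilon$ in terms of $N_0^{(3/2)^n}$ and using $\alpha=6\tau+6\sigma+7$ shows this is super-exponentially small.

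Next, the conjugation $\mathcal{L}_{n+1}=e^{\mathrm{i}\mathbf{G}^{n+1}}\mathcal{L}_n e^{-\mathrm{i}\mathbf{G}^{n+1}}$ yields $\mathbf{\Lambda}^{n+1}=\mathbf{\Lambda}^n+\mathrm{diag}\{[\mathbf{P}^n]^{[j]}_{[j]}\}$ (time-independent, block diagonal, Hermitian), and a new perturbation $\mathbf{P}^{n+1}$ to which I apply Lemma \ref{5.112}. For (B), the correction to each block $\Lambda_j^{n+1}-\Lambda_j^n=[\mathbf{P}^n]^{[j]}_{[j]}$ is Hermitian and bounded (in operator norm and Lipschitz semi-norm) by $\|\mathbf{P}^n\|^{s,\mathcal{L}ip}_{s-m,s+m}\le C_\ast N_{n-1}^{-\alpha}\epsilon$; by the Kato perturbation inequality for eigenvalues of Hermitian matrices (Corollary A.7 in \cite{F.G2019}, already invoked in Lemma \ref{z1}), each $\lambda^{n+1}_{j,v}$ differs from $\lambda^n_{j,v}$ by at most this amount, so the gap $|\lambda^{n+1}_{i,v}-\lambda^{n+1}_{j,v'}|\ge (c_0/2)|i-j|$ is preserved with room to spare provided $\sum_n C_\ast N_{n-1}^{-\alpha}\epsilon\ll c_0$, and similarly $\|\Lambda^{n+1}_j\|^{lip}\le 1/8+\sum_{k\le n}C_\ast N_{k-1}^{-\alpha}\epsilon\le 1/4$. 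For (C), substitute the inductive bounds into \eqref{6.1}--\eqref{6.2}: the first term $CN_n^{-\beta}C_\ast N_{n-1}$ is controlled using $\beta=\alpha+1$ and $N_n=N_{n-1}^{3/2}$, while the quadratic term $CN_n^{2\tau+2\sigma+2}(C_\ast N_{n-1}^{-\alpha}\epsilon)^2$ is controlled using $\alpha=6\tau+6\sigma+7$; both are dominated by $C_\ast N_n^{-\alpha}\epsilon$ for $N_0$ large, and the high-norm estimate grows only geometrically so stays below $C_\ast N_n$.

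The main obstacle I expect is the book-keeping in step (C): balancing the telescoping low-norm decay $N_{n-1}^{-\alpha}\epsilon$ against the polynomial losses $N_n^{2\tau+2\sigma+2}$ from the small divisors and against the slow growth of the high norm $N_n$, so that the inductive constants $C_\ast$, $C_\star$ can be chosen uniformly in $n$. The choice $\alpha=6\tau+6\sigma+7$, $\beta=\alpha+1$, $m=2\sigma+2$, together with the super-geometric scaling $N_n=N_0^{(3/2)^n}$, is precisely tuned so that once $N_0$ is large enough, each iterate gains a factor $N_{n-1}^{-\alpha/2}$ in the low norm, which drives the super-exponential convergence; the eigenvalue drift is then a convergent series and (B) is preserved throughout.
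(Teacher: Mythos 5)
Your proposal is correct and follows essentially the same route as the paper: induction from the base case via assumptions \textbf{(A1)}, \textbf{(A2)}, invocation of Lemma \ref{5.111} for the homological equation bound, Lemma \ref{5.112} together with the choices \eqref{5.45}--\eqref{5.46} for the low/high norm recursion, and a Weyl-type Hermitian perturbation estimate (Prop.\ \ref{8.6}) plus the convergent series $\sum_n N_{n-1}^{-\alpha}\epsilon$ for preserving the eigenvalue gap and Lipschitz bound. The only point you make more explicitly than the paper is the smallness check on $\mathbf{G}^{n+1}$ needed to invert $e^{\mathrm{i}\mathbf{G}^{n+1}}$ via Lemma \ref{2.4}, which in the paper is subsumed in the hypothesis of Lemma \ref{5.112} and the super-exponential decay of $N_k^{2\tau+2\sigma+2}N_{k-1}^{-\alpha}\epsilon$.
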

\begin{proof}
We prove this proposition by induction method.

From the assumptions  \textbf{(A1)} and \textbf{(A2)}, the conditions \textbf{(B)},\textbf{(C)} are valid for $n=0$. We assume that  conditions \textbf{(A)},\textbf{(B)},\textbf{(C)} hold true for $1\leq n \leq k$. Our goal is to prove that they also hold for $n=k+1$.

From Lemma \ref{5.111}, for any $s \in [s_0, S-\beta]$ and $\omega  \in \Omega_{k+1}$, we have
\begin{align}
\|\mathbf{G}^{k+1}\|^{s,\mathcal{L}ip}_{s\mp m,s\mp m} \lesssim & N^{2\tau+2\sigma+2}_{k} \|\mathbf{P}^{k}\|^{s,\mathcal{L}ip}_{s- m,s+ m}\\
\leq&C_{\star} N^{2\tau+2\sigma+2}_k N^{-\alpha}_{k-1}\epsilon.
\end{align}
Hence, the  condition \textbf{(A)}  holds true for $n=k+1$.

From Lemma \ref{5.112}, for any $s\in [s_0,S-\beta]$ and $\omega  \in \Omega_{k+1}$, one gets
\begin{align}
\|\mathbf{P}^{k+1}\|^{s,\mathcal{L}ip}_{s-m,s+ m} &\leq C\Big(N^{-\beta}_k\|\mathbf{P}^k\|^{s+\beta,\mathcal{L}ip}_{s+\beta-m,s+\beta+ m}+N^{2\tau+2\sigma+2}_k  (\|\mathbf{P}^k\|^{s,\mathcal{L}ip}_{s-m,s+ m})^2\Big)\\
&\leq C C_{\ast}N_{k-1}N_k^{-\beta} \epsilon +CC_{\ast}^2N^{-2\alpha}_{k-1}N^{2\tau+2\sigma+2}_k\epsilon^2\\
&\leq C_{\ast}N_{k}^{-\alpha}\epsilon,
\end{align}
provided
\begin{equation}
2CN^{\alpha-\beta}_kN_{k-1} \leq 1, \quad 2CC_{\ast}N^{-2\alpha}_{k-1}N_k^{\alpha+2\tau+2\sigma+2}\epsilon  \leq 1.
\end{equation}
 These conditions can be verified by \eqref{5.45} and \eqref{5.46}. Furthermore, we have
\begin{align}
\|\mathbf{P}^{k+1}\|^{s+\beta,\mathcal{L}ip}_{s+\beta-m,s+\beta+ m} & \leq C\Big( \|\mathbf{P}^{k}\|^{s+\beta,\mathcal{L}ip}_{s+\beta-m,s+\beta+ m} +N^{2\tau+2\sigma+2}_k  \|\mathbf{P}^{k}\|^{s,\mathcal{L}ip}_{s-m,s+ m}\|\mathbf{P}^{k}\|^{s+\beta,\mathcal{L}ip}_{s+\beta-m,s+\beta+ m}\Big)\\
& \leq C C_{\ast}N_{k-1}\epsilon+ CC_{\ast}N_{k}^{2\tau+2\sigma+2}N^{-\alpha}_{k-1} N_{k-1}\epsilon^2 \\
& \leq C_{\ast}N_{k}\epsilon,
\end{align}
provided $N_0$ is big enough.\\
Hence, the condition \textbf{(C)} is valid for $n=k+1$.

Regarding the new diagonal operator  $\mathbf{\Lambda}^{k+1}=\mathbf{\Lambda}^{k}+\mathrm{diag}[\mathbf{P}^{k}]$, from Prop \ref{8.7}, $\forall i\neq j$, one has
\begin{align*}
|\lambda^{k+1}_{i,v}-\lambda^{k+1}_{j,v'}| &\geq |\lambda_{i,v}-\lambda_{j,v'}|-\big(\sum^{k}_{n=0}[(\mathbf{P}^n)^{[i]}_{[i]}]+\sum^{k}_{n=0}[(\mathbf{P}^n)^{[j]}_{[j]}] \big)\\
& \geq c_0|i-j|-2\sum^{k}_{n=0}\|\mathbf{P}^n\|^{s,\mathcal{L}ip}_{s-m.s+m} \\
& \geq c_0|i-j|-2C_{\star}\sum^{k}_{n=0}N^{-\alpha}_{n-1}\epsilon\\
&\geq \frac{c_0}{2}|i-j|.
\end{align*}

Since the Lipschitz variation of the eigenvalues of an Hermitian matrix is controlled by the Lipschitz variation of the matrix, one has
\begin{align*}
|\lambda^{k+1}_{j,v}|^{lip} & \leq \|\Lambda^{k+1}_j\|^{lip} \leq \frac{1}{8}+\sum^{k}_{n=0}\|(\mathbf{P}^n)^{[i]}_{[i]}\|^{lip}\\
& \leq \frac{1}{8}+C_{\star}\sum^{k}_{n=0}N^{-\alpha}_{n-1}\epsilon \leq \frac{1}{4}.
\end{align*}
Hence, the condition \textbf{(B)} is valid for $n=k+1$.
\end{proof}

Morover, we need  estimate the set of parameters excluded in the KAM iteration. Thus, we need the following assertions.

\subsubsection{Measure Estimates}\

In this section, we show that the set excluded in the KAM iteration  is  asymptotic  full measure. In the  iteration procedure, we have recursively defined the set $\{\Omega_{k,\alpha}\}, k \geq 0$, where $\Omega_{k+1,\alpha} \subseteq \Omega_{k,\alpha}, \ k \geq 0$.

Set $\Omega_{\infty, \alpha} =\bigcap^{\infty}_{i=0}\Omega_{i,\alpha}$, we prove the following assertion.
\begin{thm}\label{5.7}
\begin{equation}
meas(\Omega_{0,\alpha}\backslash \Omega_{\infty,\alpha}) \leq C \alpha.
\end{equation}
\end{thm}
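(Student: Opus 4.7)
The plan is to write
\[
\Omega_{0,\alpha}\setminus\Omega_{\infty,\alpha}=\bigcup_{k\ge 0}\bigl(\Omega_{k,\alpha}\setminus\Omega_{k+1,\alpha}\bigr),
\]
and, for each $k$, decompose the increment according to the non-resonance condition \eqref{5.21}. Defining the resonant sets
\[
R^{k}_{\ell,i,j,v,v'}:=\Bigl\{\omega\in\Omega_{k,\alpha}:\bigl|\omega\cdot\ell+\lambda^{k}_{i,v}-\lambda^{k}_{j,v'}\bigr|<\tfrac{\alpha}{N_{k}^{\tau}\langle i\rangle^{\sigma}\langle j\rangle^{\sigma}}\Bigr\},
\]
one has $\Omega_{k,\alpha}\setminus\Omega_{k+1,\alpha}=\bigcup R^{k}_{\ell,i,j,v,v'}$, where the union runs over $|\ell|\le N_{k}$, $(\ell,i,j)\ne(0,i,i)$, $v,v'\in\{1,2\}$. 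I will estimate each $R^{k}_{\ell,i,j,v,v'}$ separately and then sum.

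The key input for each individual measure bound is the Lipschitz estimate \eqref{5.51}: the eigenvalues $\lambda^{k}_{i,v}$, $\lambda^{k}_{j,v'}$ are each Lipschitz in $\omega$ with constant at most $1/4$, so the map $\omega\mapsto\omega\cdot\ell+\lambda^{k}_{i,v}-\lambda^{k}_{j,v'}$ has Lipschitz constant at least $|\ell|-\tfrac12\ge\tfrac{|\ell|}{2}$ whenever $|\ell|\ge 1$. By Fubini along the direction $\ell/|\ell|$ and using that $\Omega=[1,2]^{d}$ is bounded, this gives the one-dimensional slab bound
\[
\operatorname{meas}\bigl(R^{k}_{\ell,i,j,v,v'}\bigr)\le \frac{C\alpha}{|\ell|\,N_{k}^{\tau}\langle i\rangle^{\sigma}\langle j\rangle^{\sigma}},\qquad \ell\ne 0.
\]
The excluded index $(\ell,i,j)=(0,i,i)$ together with the eigenvalue separation in condition \textbf{(B)} of Proposition \ref{5.6} handles the case $\ell=0$: for $i\ne j$ one has $|\lambda^{k}_{i,v}-\lambda^{k}_{j,v'}|\ge\tfrac{c_{0}}{2}|i-j|\ge\tfrac{c_{0}}{2}$, which is larger than $\alpha/(N_{k}^{\tau}\langle i\rangle^{\sigma}\langle j\rangle^{\sigma})$ for $\alpha$ small, so $R^{k}_{0,i,j,v,v'}=\emptyset$.

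Still using condition \textbf{(B)}, when $\ell\ne 0$ the resonant set is also empty unless $|i-j|\le C|\ell|$, since otherwise $|\omega\cdot\ell+\lambda^{k}_{i,v}-\lambda^{k}_{j,v'}|\ge \tfrac{c_{0}}{2}|i-j|-2d|\ell|$ is much larger than the required lower bound. Using $\sigma>1$ (guaranteed by \eqref{5.45}), the sum
\[
\sum_{\substack{i,j\in\mathbb N\\|i-j|\le C|\ell|}}\frac{1}{\langle i\rangle^{\sigma}\langle j\rangle^{\sigma}}\le \sum_{i}\frac{1}{\langle i\rangle^{\sigma}}\sum_{|j-i|\le C|\ell|}\frac{1}{\langle j\rangle^{\sigma}}
\]
is bounded by an absolute constant uniformly in $\ell$. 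Combined with $\sum_{0<|\ell|\le N_{k}}|\ell|^{-1}\lesssim N_{k}^{d-1}$, this yields
\[
\operatorname{meas}\bigl(\Omega_{k,\alpha}\setminus\Omega_{k+1,\alpha}\bigr)\lesssim \frac{\alpha\,N_{k}^{d-1}}{N_{k}^{\tau}}=\frac{\alpha}{N_{k}^{\tau-d+1}}.
\]

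Finally, with $\tau>d+1$ (so $\tau-d+1\ge 2$) and the super-geometric scale $N_{k}=N_{0}^{(3/2)^{k}}$ of \eqref{5.46}, the series $\sum_{k\ge 0}N_{k}^{-(\tau-d+1)}$ converges to a constant, giving
\[
\operatorname{meas}(\Omega_{0,\alpha}\setminus\Omega_{\infty,\alpha})\le \sum_{k\ge 0}\operatorname{meas}\bigl(\Omega_{k,\alpha}\setminus\Omega_{k+1,\alpha}\bigr)\le C\alpha,
\]
as claimed. The only delicate point is the case $i=j$ with $v\ne v'$, for which condition \textbf{(B)} provides no separation between the two eigenvalues of the block $\Lambda^{k}_{j}$; however, the exclusion $(\ell,i,j)\ne(0,i,i)$ forces $\ell\ne 0$ here, and then the term $\omega\cdot\ell$ alone supplies the required Lipschitz slope for the slab estimate. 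This is the main technical subtlety that the plan handles automatically via the uniform bound \eqref{5.51}.
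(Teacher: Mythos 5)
Your proof follows the same route as the paper's: decompose $\Omega_{0,\alpha}\setminus\Omega_{\infty,\alpha}$ into increments $\Omega_{k,\alpha}\setminus\Omega_{k+1,\alpha}$, decompose each increment into the resonant slabs $R^{k}_{\ell ijvv'}$, kill the $\ell=0$ slabs via the eigenvalue separation in \textbf{(B)}, bound each $\ell\ne 0$ slab using the Lipschitz bound \eqref{5.51} together with a Fubini argument along $\ell/|\ell|$, sum in $i,j$ using $\sigma>1$, then sum in $\ell$ and finally in $k$ using the super-geometric scale \eqref{5.46}. This is exactly the paper's Lemma~\ref{5.8} plus the telescoping sum, so the argument is correct and essentially identical.

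Two small refinements you introduce are harmless but also unnecessary: keeping the extra $|\ell|^{-1}$ in the slab estimate (the paper simply uses slope $\ge 1/2$ and pays the full $N_k^{d}$ when summing in $\ell$, which already yields $N_k^{d-\tau}\le N_k^{-1}$ from $\tau>d+1$), and restricting the $(i,j)$-sum to $|i-j|\lesssim|\ell|$ (superfluous here since $\sigma>1$ makes the unrestricted double sum converge). Note also that for $d=1$ the bound $\sum_{0<|\ell|\le N_k}|\ell|^{-1}\lesssim N_k^{d-1}$ degenerates to a logarithm; this does not affect the conclusion since any power of $N_k$ is swallowed by $N_k^{-\tau}$, but it is worth being precise. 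Finally, your observation about the case $i=j$, $v\ne v'$ is correct and is implicitly handled the same way in the paper (the exclusion $(\ell,i,j)\ne(0,i,i)$ forces $\ell\ne 0$, and the slab estimate then comes entirely from the $\omega\cdot\ell$ term).
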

Since $\Omega_{k+1} \subseteq \Omega_{k}, \ k \geq 0$, we can decompose $\Omega_{0,\alpha}\backslash \Omega_{\infty,\alpha}$ as
\begin{equation}
\Omega_{0,\alpha}\backslash \Omega_{\infty,\alpha}=\bigcup^{\infty}_{k=0}(\Omega_{k,\alpha}\backslash \Omega_{k+1,\alpha}).
\end{equation}
Obviously, to estimate  the measure  of  $(\Omega_{k,\alpha}\backslash \Omega_{k+1,\alpha})$ is crucial. From the definition of $\Omega_{k,\alpha}$, one has
\begin{equation*}
  \Omega_{k,\alpha}\backslash \Omega_{k+1,\alpha}\subseteq \bigcup_{\substack{\ell \in \Z^d ,|\ell| \leq N_k\\ |i-j|\leq N_k}} \bigcup_{\substack{(\ell,i,j) \neq (0,j,j)\\
 v,v'=1,2 }}R_{\ell ijvv'}
\end{equation*}
and
\begin{equation*}
R_{\ell ijvv'}=\Big\{\omega \in \Omega_{k,\alpha}: | \omega \cdot \ell  +\lambda^k_{i,v}-\lambda^k_{j,v'}|< \frac{\alpha}{N^{\tau}_k \langle i\rangle^{\sigma} \langle j \rangle^{\sigma}}\Big\}.
\end{equation*}

\begin{lem}\label{5.8}
\begin{equation*}
meas(\Omega_{k,\alpha}\backslash \Omega_{k+1,\alpha}) \leq C \alpha N^{-1}_{k}.
\end{equation*}
\end{lem}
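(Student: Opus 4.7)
My strategy is the standard one for measure estimates in KAM: bound the measure of each individual resonant set $R_{\ell i j v v'}$ appearing in the decomposition displayed just above the lemma, then sum over indices, using the parameter choices $\tau > d+1$ and $\sigma > 1$ fixed in \eqref{5.45} to extract the advertised factor $N_k^{-1}$.

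For the single-set estimate when $\ell\neq 0$, I view the function $f(\omega) := \omega\cdot\ell + \lambda^k_{i,v}(\omega) - \lambda^k_{j,v'}(\omega)$ as a Lipschitz perturbation of the linear map $\omega\mapsto\omega\cdot\ell$. The bound $\|\Lambda^k_j\|^{lip}\leq 1/4$ furnished by \eqref{5.51} of Proposition \ref{5.6} controls the eigenvalue contribution and leaves, along the direction $\ell/|\ell|$, a one-dimensional lower Lipschitz constant of at least $|\ell|-\tfrac12\geq |\ell|/2$. A Fubini-type slicing of $\Omega\subseteq[1,2]^d$ along lines parallel to $\ell$ then gives
\begin{equation*}
\meas(R_{\ell i j v v'}) \;\leq\; \frac{C\alpha}{|\ell|\, N_k^{\tau}\, \langle i\rangle^{\sigma}\, \langle j\rangle^{\sigma}}.
\end{equation*}

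The case $\ell = 0$ is handled separately: the excluded-triple condition forces $i\neq j$, and condition \textbf{(B)} of Proposition \ref{5.6} gives $|\lambda^k_{i,v}-\lambda^k_{j,v'}|\geq c_0/2$, so $R_{0 i j v v'}$ is empty once $\alpha$ is chosen sufficiently small and makes no contribution.

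To finish, I will sum the single-set bound. The double sum $\sum_{i,j\in\mathbb{N}}\langle i\rangle^{-\sigma}\langle j\rangle^{-\sigma}$ converges because $\sigma>1$, while $\sum_{0<|\ell|\leq N_k}|\ell|^{-1}\leq C N_k^{d}$. Combining these yields a bound of $C\alpha N_k^{d-\tau}$, which is $\leq C\alpha N_k^{-1}$ since $\tau>d+1$ by \eqref{5.45}. The only delicate point is the Lipschitz-transversality argument extracting the crucial $1/|\ell|$ gain in each individual resonance; after that the summation proceeds comfortably within the margin built into the choices of $\tau$ and $\sigma$.
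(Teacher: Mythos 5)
Your proposal is correct and follows essentially the same approach as the paper's proof: treat $\ell=0$ separately using the eigenvalue separation from Proposition~\ref{5.6}(B), and for $\ell\neq 0$ decompose $\omega$ along $\ell/|\ell|$, use the Lipschitz bound \eqref{5.51} to get a one-dimensional transversality constant, apply Fubini, and then sum using $\sigma>1$ (convergence in $i,j$) and $\tau>d+1$ (to absorb the $N_k^d$ from the $\ell$-sum). One small observation: you carry a factor $1/|\ell|$ in your per-resonance bound (from keeping $|\ell|/2$ rather than relaxing it to $1/2$ as the paper does), but you then discard that gain by using the crude bound $\sum_{0<|\ell|\leq N_k}|\ell|^{-1}\leq CN_k^{d}$, so the two arguments land on the same estimate $C\alpha N_k^{d-\tau}\leq C\alpha N_k^{-1}$; the refinement is cosmetic given the margin already built into $\tau>d+1$.
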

\begin{proof}
If $\ell =0$ and $i \neq j$, we have
\begin{align}
|\lambda^k_{i,v}-\lambda^k_{j,v'}|\geq \frac{c_0}{2}|i-j| \geq \alpha.
\end{align}
Hence, $R_{0ijvv'}$ is an empty set.
For the other cases, we consider the Lipschitz function $g(\omega)$
$$g(\omega)=\omega \cdot \ell +\lambda^k_{i,v}(\omega)-\lambda^k_{j,v'}(\omega). $$
If $\ell \neq 0$, we write
\begin{equation}
\omega=\frac{\ell}{|\ell|}s+\omega_1, \quad \omega_1 \in \mathbb{R}^d, \quad \omega_1 \cdot \ell=0,
\end{equation}
and
$$g(s)=|\ell| \cdot s + \lambda^k_{i,v}(\omega(s))-\lambda^k_{j,v'}(\omega(s)).$$

From \eqref{5.51}, we can obtain
\begin{equation}
|g(s_1)-g(s_2)| \geq (|\ell|-\frac{1}{4})|s_1-s_2| \geq \frac{1}{2}|s_1-s_2|,
\end{equation}
which implies
\begin{equation}
meas \Big\{s\in \mathbb{R}: g_{\ell ijvv'}(s) <\frac{\alpha}{N^{\tau}_k \langle i\rangle^{\sigma} \langle j\rangle^{\sigma}}   \Big\} <\frac{2\alpha}{N^{\tau}_k \langle i\rangle^{\sigma} \langle j\rangle^{\sigma} }
\end{equation}

By the Fubini theorem, we can get

\begin{equation}
meas(R_{\ell i j v v'}) \leq \frac{2\alpha}{N^{\tau}_k \langle i\rangle^{\sigma} \langle j\rangle^{\sigma} }.
\end{equation}

Finally, we have
\begin{align*}
meas(\Omega_{k,\alpha}\backslash \Omega_{k+1,\alpha})\leq &\sum_{\substack{\ell \in \Z^d ,|\ell| \leq N_k\\ i,j \in \mathbb{N }}} \sum_{\substack{(\ell,i,j) \neq (0,j,j)\\
 v,v=1,2 }}meas(R_{\ell ijvv'})\\
 \leq & \sum_{\substack{\ell \in \Z^d ,|\ell| \leq N_k\\ i,j \in \mathbb{N }}} \frac{8 \alpha}{N^{\tau}_k \langle i\rangle^{\sigma} \langle j\rangle^{\sigma} }\\
 \leq & CN^{-1}_k \alpha
\end{align*}
\end{proof}
\begin{proof}(Proof of Theorem \ref{5.7}) From Lemma \ref{5.8}, provided $N_0$ is large enough, we have
\begin{equation}
meas(\Omega_{0,\alpha}\backslash \Omega_{\infty,\alpha}) \leq \sum^{\infty}_{k=0}  CN^{-1}_k \alpha \leq C\alpha.
\end{equation}
\end{proof}

From Proposition \ref{5.6} and  Theorem \ref{5.7}, we can give a proof the Theorem \ref{5.30}.
\begin{proof}(Proof of Theorem \ref{5.30}) For any $k\geq 0$, we can define a sequence linear operator
\begin{equation}
\Phi_k=e^{-\mathrm{i}\mathbf{G}_1}\circ e^{-\mathrm{i}\mathbf{G}_2} \circ \cdots e^{\mathrm{i}\mathbf{G}_k}
\end{equation}
on the set $\Omega_{\infty,\alpha}$.
The sequence of linear operator $\{\Phi_k\}_{k\geq1}$ is converges to an invertible operator $\Phi_{\infty}$, and satisfies
\begin{equation}
\|\Phi^{\pm}_{\infty}-\mathrm{Id}\|^{s,\mathcal{L}ip}_{s\pm m.s\pm m}\leq C(s) N^{2\tau+2\sigma+2}_0 \epsilon.
\end{equation}
From Lemma \ref{2.101} and Remark \ref{2.102}, for any $r \in[0 ,S-\beta-\frac{d+1}{2})$, there exists $ s \in[s_0,S-\beta]$ such that
\begin{equation}
\sup_{\theta \in \mathbb{ T}^d}\|\Phi^{\pm}_{\infty}-\mathrm{Id}\|_{\mathcal{L}(\mathrm{H}^{r})} \leq \|\Phi^{\pm}_{\infty}-\mathrm{Id}\|^{s}_{s\pm m.s\pm m}\leq C(s) N^{2\tau+2\sigma+2}_0 \epsilon.
\end{equation}
Passing the iterative Lemma \ref{5.6} to the limit, the operator $\mathcal{L}_0$  is conjugated to
$$\mathcal{L}_{\infty}=\mathrm{i} \omega \cdot \partial_{\theta}-\mathbf{\Lambda}^{\infty}$$
where $\mathbf{\Lambda}^{\infty}$ is a $\theta$ independent, block diagonal, Hermitian operator.
\end{proof}

\section{Proof of the  main result}
\begin{proof}(Proof of Theorem \ref{THM1}) We consider the composition operator
\begin{equation}
\mathcal{N}(\theta)= e^{-i\mathcal{B}_0(\theta)}\circ \cdots e^{-i\mathcal{B}_{M-1}(\theta)}\circ \Phi_{\infty}(\theta)
\end{equation}
defined on $\Omega_{\infty,\alpha}$.
From Theorem \ref{5.7} and Prop \ref{4.8}, one gets
\begin{equation}
meas(\Omega\backslash \Omega_{\infty,\alpha})\leq meas(\Omega\backslash \Omega_{0,\alpha})+meas(\Omega_{0,\alpha} \backslash \Omega_{\infty,\alpha}) \leq C\alpha.
\end{equation}
The coordinate transormation $u=\mathcal{N}(\theta)v $ transforms the equation \eqref{eq1} into
\begin{equation}
\mathrm{i} \partial_tv=\mathbf{\Lambda}^{\infty}v.
\end{equation}

From Lemma \ref{8.1} and Theorem \ref{5.7}, for any $r\geq 0$, there exists a finite constant $C_0$ such that
\begin{equation}\label{4.11}
\sup_{\theta \in \mathbb{T}^d}\|\mathcal{N}(\theta)\|_{\mathcal{L}(\mathrm{H}^{r})} \leq\sup_{\theta \in \mathbb{T}^d} \| e^{-i\mathcal{B}_0(\theta)}\circ \cdots e^{-i\mathcal{B}_{M-1}(\theta)}\|_{\mathcal{L}(\mathrm{H}^{r})}\sup_{\theta \in \mathbb{T}^d}\|\Phi_{\infty}(\theta)\|_{\mathcal{L}(\mathrm{H}^{r})} \leq C_0
\end{equation}
and
\begin{equation}\label{4.12}
\sup_{\theta \in \mathbb{T}^d}\|\mathcal{N}^{-1}(\theta)\|_{\mathcal{L}(\mathrm{H}^{r})} \leq \sup_{\theta \in \mathbb{T}^d}\|\Phi^{-1}_{\infty}(\theta)\|_{\mathcal{L}(\mathrm{H}^{r})}\sup_{\theta \in \mathbb{T}^d}\| e^{i\mathcal{B}_0(\theta)}\circ \cdots e^{i\mathcal{B}_{M-1}(\theta)}\|_{\mathcal{L}(\mathrm{H}^{r})} \leq C_0.
\end{equation}
Hence, the Theorem \ref{THM1} is proved.
\end{proof}

\section{Appendix A}
For the convenience of reader, we emphasize the difference between the proof of Theorem \ref{THM1} and of Theorem \ref{THM2}.

\textbf{The difference in functional space:}\

The Sobolev space $\mathrm{H}^r(\T_{\beta})$ is defined by
\begin{equation}
	\mathrm{H}^r(\T_{\beta}):=\Set{ u =\sum_{\xi \in \mathbb{Z}}\hat{u}(\xi)e^{\mathrm{i}\frac{x}{\beta}\cdot \xi}: |  \norm{u}_{\mathrm{H}^r(\T)}^2:= \sum_{\xi\in\Z}\braket{\xi}^{2r} \hat{u}(\xi)^2 <\infty  } .
\end{equation}
Similarly, we can define the pseudo-differential operator on the  irrational torus $\T_{\beta}$.
\begin{defn}\label{defn:symbols_}
	Given $m \in \mathbb{R}$, a function $a(x,\xi) \in C^{\infty}(\mathbb{T}_{\beta} \times \mathbb{Z})$ is called a symbol of class $S^m$ if for any
	$ \alpha, \beta \in \mathbb{N}$, there exists $C_{\alpha, \beta}>0$ such that
	$$
	\abs{\partial_x^\alpha \Delta^\beta a( x, \xi)} \leq C_{\alpha, \beta} \,  \langle \xi \rangle^{m - \beta}  \ , \quad \forall (x,\xi) \in \T_{\beta}\times \mathbb{Z}   \ .
	$$
\end{defn}

\begin{defn}\label{defn:symbols2}
	Given a symbol $a \in S^m$, we say that $Op(a)$ is the associated pseudo-differential operator of $a$ if for any $u \in L^2(\T_{\beta})$
\begin{equation}
Op(a)[u](x)=\sum_{\xi \in \Z} a(x, \xi) \hat{u}(\xi) e^{\mathrm{i} \frac{x}{\beta} \cdot \xi}.
\end{equation}
\end{defn}
Since  the length of space tours  has been taken as new parameters, the pseudo-differential operator $\mathcal{W}(\omega t)$ changes with the parameters.  We  should establish an equivalence relationship between pseudo-differential operators on different irrational torus, and prove that this relationship does not change with algebraic operation.
\begin{defn}\label{defn:same symbol}
Given two symbol $a  \in C^{\infty}(\T_{\beta_1}\times \mathbb{Z}), b\in C^{\infty}(\T_{\beta_2}\times \mathbb{Z})$, we say that the associated pseudo-differential operators $Op(a)$ and $Op(b)$  are in the same class, if
\begin{equation}
a(x,\xi)=b(\frac{\beta_2}{\beta_1}x,\xi).
\end{equation}
Namely, $a\approx b$.
\end{defn}

\begin{lem}\label{7.5}
Given the following four symbols, $a,b \in  C^{\infty}(\T_{\beta_1}\times \mathbb{Z})$ and $ c,d \in  C^{\infty}(\T_{\beta_2}\times \mathbb{Z})$. If $a \approx c$ and $b\approx d$, the composition of pseudo-differential operators $Op(a)\circ Op(b)$ and $Op(c) \circ Op(d)$ are  in the same class.
\end{lem}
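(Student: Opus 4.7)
The plan is to establish the claim by an explicit computation of the composition symbol on each torus, then matching them through the rescaling $y = \frac{\beta_2}{\beta_1}x$. First I would derive the general composition formula on $\T_\beta$: if $a, b \in C^\infty(\T_\beta \times \mathbb{Z})$, acting $Op(a)\circ Op(b)$ on a Fourier mode $e^{\mathrm{i}\frac{x}{\beta}\eta}$ and expanding $b(x,\eta)$ in its Fourier series on $\T_\beta$, one obtains a formula of the type
\begin{equation*}
(a\#b)(x,\eta) = \sum_{k\in\mathbb{Z}} a(x,k+\eta)\,\hat{b}(k,\eta)\,e^{\mathrm{i}\frac{x}{\beta}k},
\end{equation*}
where $\hat{b}(k,\eta) = \frac{1}{2\pi\beta}\int_{\T_\beta} b(x,\eta) e^{-\mathrm{i}\frac{x}{\beta}k}\,dx$ denotes the Fourier coefficient of $b(\cdot,\eta)$ on $\T_\beta$.

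Next I would write the analogous composition formula for $c,d$ on $\T_{\beta_2}$:
\begin{equation*}
(c\#d)(y,\eta) = \sum_{k\in\mathbb{Z}} c(y,k+\eta)\,\hat{d}(k,\eta)\,e^{\mathrm{i}\frac{y}{\beta_2}k}.
\end{equation*}
The central identity to verify is that $\hat{b}(k,\eta) = \hat{d}(k,\eta)$ whenever $b \approx d$, i.e., $b(x,\eta) = d\bigl(\tfrac{\beta_2}{\beta_1}x,\eta\bigr)$. This is a clean change of variables: setting $y = \frac{\beta_2}{\beta_1}x$ sends $\T_{\beta_1}$ onto $\T_{\beta_2}$ with Jacobian $\frac{\beta_1}{\beta_2}$, and the phase transforms as $e^{-\mathrm{i}\frac{x}{\beta_1}k} = e^{-\mathrm{i}\frac{y}{\beta_2}k}$, so the two Fourier coefficients coincide exactly.

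With that identity in hand, substituting $a(x,k+\eta) = c\bigl(\tfrac{\beta_2}{\beta_1}x,k+\eta\bigr)$ and $\hat{b}(k,\eta) = \hat{d}(k,\eta)$ into the formula for $(a\#b)$, and noting that $e^{\mathrm{i}\frac{x}{\beta_1}k} = e^{\mathrm{i}\frac{(\beta_2/\beta_1)x}{\beta_2}k}$, yields
\begin{equation*}
(a\#b)(x,\eta) = \sum_{k\in\mathbb{Z}} c\!\left(\tfrac{\beta_2}{\beta_1}x,k+\eta\right)\hat{d}(k,\eta)\,e^{\mathrm{i}\frac{(\beta_2/\beta_1)x}{\beta_2}k} = (c\#d)\!\left(\tfrac{\beta_2}{\beta_1}x,\eta\right),
\end{equation*}
which is exactly $(a\#b)\approx(c\#d)$, proving the claim.

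The computation is essentially bookkeeping, and there is no real obstacle beyond being careful with the normalization of the Fourier coefficients on tori of different lengths. The only subtle point is ensuring that the sum defining the composition symbol converges and that the formal manipulations of Fourier series are legitimate; this is granted by the assumption that all symbols lie in $C^\infty$ and decay sufficiently in $\xi$, so that the Fourier coefficients $\hat{b}(k,\eta)$ decay faster than any polynomial in $k$. With this justification the interchanges of sums and integrals above are routine.
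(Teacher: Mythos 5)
Your proof is correct and follows essentially the same route as the paper: write out the composition symbol $a\#b(x,\eta)=\sum_k a(x,k+\eta)\,\hat b(k,\eta)\,e^{\mathrm{i}kx/\beta_1}$, observe that the Fourier coefficients in $x$ are invariant under the rescaling $x\mapsto\tfrac{\beta_2}{\beta_1}x$, and substitute to get $(a\#b)(x,\eta)=(c\#d)\bigl(\tfrac{\beta_2}{\beta_1}x,\eta\bigr)$. The paper's proof is the same computation written more tersely (it asserts $\hat b_j(\xi)=\hat d_j(\xi)$ without spelling out the change of variables you give, and it writes $\hat b(j)$ with the $\xi$-dependence suppressed), so your write-up is simply a more careful version of the same argument.
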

\begin{proof}
Notice that $Op(a)\circ Op(b)=Op(a \sharp b)$, one gets
\begin{equation}
a \sharp b(x,\xi)=\sum_{j \in \Z} a(x, \xi+j)\hat{b}(j)e^{\mathrm{i} \frac{x}{\beta_1}\cdot j}.
\end{equation}
Let $Op(c) \circ Op(d)=Op(c \sharp d)$, we have
\begin{equation}
c \sharp d(x,\xi)=\sum_{j \in \Z} c(x, \xi+j)\hat{d}(j)e^{\mathrm{i} \frac{x}{\beta_2}\cdot j}.
\end{equation}
  From Definition \ref{defn:same symbol}, $b \approx d$ implies that $\hat{b}_j(\xi)= \hat{d}_j(\xi)$. Finally, we can get
   \begin{equation}
   a \sharp b(x,\xi) \approx c \sharp d(x,\xi).
   \end{equation}
\end{proof}
\textbf{The difference in reducing the order of perturbation:}\

The equation \eqref{eq2} can be rewritten as
\begin{equation}\label{eq5}
\mathrm{i}\partial_{t}u=  v \cdot \mathcal{K} u+ \mathcal{Q}(\omega t)u+\varepsilon \mathcal{W}(\omega t)[u],
\end{equation}
where $\mathcal{K} e^{\mathrm{i}j \cdot \frac{x}{\beta}}=|j| e^{\mathrm{i}j \cdot \frac{x}{\beta}}$, $v=\frac{1}{\beta} \in [1,2]$. $\mathcal{Q}$ is a  pseudo-differential operator of  order $-1$, and
$$\mathcal{Q}e^{\mathrm{i}j\frac{x}{\beta}}=\frac{c(\mathfrak{m},v,|j|)}{\langle j \rangle }e^{\mathrm{i} j \cdot \frac{x}{\beta}},$$
where
$$|c(\mathfrak{m},v,|j|)|^{\mathcal{L}ip} \leq \frac{1}{4}, \quad \forall j\in \mathbb{N} , \ 0<\mathfrak{m}<\frac{1}{4},\ v \in [1,2].$$

Moreover, we define  a  new parameter set $\tilde{\Omega}_{0,\alpha} \subseteq [1,2]^{d+1}$, where
\begin{equation}\label{nonres}
\tilde{\Omega}_{0,\alpha}=\Big\{\tilde{\omega}:=(\omega,v) \in \tilde{\Omega}:=[1,2]^{d+1}:|\omega \cdot\ell+ v \cdot  k| \geq \frac{\alpha}{(|\ell|+|k|)^{d+1}}, \ \forall (\ell,k) \in \mathbb{Z}^{d+1}\setminus \{0\}\Big\}.
\end{equation}
Hence, the Lemma \ref{4.3} can be replaced by the following Lemma.
 \begin{lem}\label{reduce2}Let $\mathcal{W}$ be an Hermitian operator and belongs to $ \mathcal{L}ip (\tilde{\Omega}_{0,\alpha},C^{\infty}(\mathbb{T}^d, OPS^{\eta})), \eta \leq 1$.
Then, the homological equation
\begin{equation}\label{h1}
\omega \cdot \partial_{\theta}  \mathcal{B}+\mathrm{i}[v\cdot\mathcal{K},\mathcal{B}]=\mathcal{W}-\langle \mathcal{W} \rangle
\end{equation}
with
\begin{equation}
\langle \mathcal{W} \rangle:= \frac{1}{(2\pi)^{d+1}}\int_{\mathbb{T}^d}\int_{\mathbb{T}}e^{\mathrm{i}\kappa\cdot \mathcal{K}} \mathcal{W} e^{-\mathrm{i}\kappa\cdot \mathcal{K}}d\kappa d\theta
\end{equation}
has a solution $\mathcal{B} \in \mathcal{L}ip (\tilde{\Omega}_{0,\alpha},C^{\infty}(\mathbb{T}^d, OPS^{\eta}))$. Moreover, the operator $\mathcal{B}$ is an Hermitian operator.
 \end{lem}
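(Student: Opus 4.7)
The plan is to mirror the proof of Lemma~\ref{4.3}, with the key modification that the auxiliary flow introduced along a new variable $\kappa$ is now generated by $v\mathcal{K}$ rather than by $\mathcal{K}$ alone, so that $v$ itself enters the small divisors as a second Diophantine frequency. Concretely, I set $\mathcal{B}(\theta,\kappa):=e^{\mathrm{i}\kappa\mathcal{K}}\mathcal{B}(\theta)e^{-\mathrm{i}\kappa\mathcal{K}}$ and $\mathcal{W}(\theta,\kappa):=e^{\mathrm{i}\kappa\mathcal{K}}\mathcal{W}(\theta)e^{-\mathrm{i}\kappa\mathcal{K}}$. Because the spectrum of $\mathcal{K}$ is integer, both maps are $2\pi$-periodic in $\kappa$, and by Remark~\ref{8.4} they inherit the symbolic regularity, so that $\mathcal{W}(\theta,\kappa)\in\mathcal{L}ip(\tilde{\Omega}_{0,\alpha},C^{\infty}(\mathbb{T}^{d+1},OPS^{\eta}))$.

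Using the identity $v\,\partial_{\kappa}\mathcal{B}(\theta,\kappa)=\mathrm{i}[v\mathcal{K},\mathcal{B}(\theta,\kappa)]$, I extend the homological equation \eqref{h1} to the family
\begin{equation*}
\omega\cdot\partial_{\theta}\mathcal{B}(\theta,\kappa)+v\,\partial_{\kappa}\mathcal{B}(\theta,\kappa)=\mathcal{W}(\theta,\kappa)-\langle\mathcal{W}\rangle,
\end{equation*}
which at $\kappa=0$ recovers \eqref{h1}. Expanding both sides in Fourier on $\mathbb{T}^{d+1}$ reduces this to the algebraic system $\mathrm{i}(\omega\cdot\ell+v\cdot k)\hat{\mathcal{B}}_{\ell,k}=\hat{\mathcal{W}}_{\ell,k}$ for $(\ell,k)\neq(0,0)$, together with $\hat{\mathcal{B}}_{0,0}=0$. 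The non-resonance condition \eqref{nonres} for $\tilde{\omega}=(\omega,v)\in\tilde{\Omega}_{0,\alpha}$ yields $|\omega\cdot\ell+v\cdot k|\geq\alpha/(|\ell|+|k|)^{d+1}$ and hence allows me to invert term-by-term. Since the seminorms of $\hat{\mathcal{W}}_{\ell,k}$ decay faster than any polynomial in $|\ell|+|k|$, so do those of $\hat{\mathcal{B}}_{\ell,k}$, which gives $\mathcal{B}(\theta,\kappa)\in C^{\infty}(\mathbb{T}^{d+1},OPS^{\eta})$ and therefore $\mathcal{B}(\theta)=\mathcal{B}(\theta,0)\in C^{\infty}(\mathbb{T}^{d},OPS^{\eta})$.

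Lipschitz dependence on $\tilde{\omega}$ comes from the standard splitting of $\hat{\mathcal{B}}_{\ell,k}(\tilde{\omega}_1)-\hat{\mathcal{B}}_{\ell,k}(\tilde{\omega}_2)$ into a part carrying the difference of the divisors and a part carrying the difference of $\hat{\mathcal{W}}_{\ell,k}$, each controlled by two applications of \eqref{nonres}. The Hermitian character of $\mathcal{B}$ follows as in Lemma~\ref{4.3}: unitarity of $e^{\mathrm{i}\kappa\mathcal{K}}$ gives $\mathcal{B}=\mathcal{B}^{*}\Leftrightarrow\mathcal{B}(\theta,\kappa)=\mathcal{B}(\theta,\kappa)^{*}$, while the purely imaginary divisors preserve the symmetry $\hat{\mathcal{B}}_{-\ell,-k}^{*}=\hat{\mathcal{B}}_{\ell,k}$ inherited from $\mathcal{W}$.

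The only real difference from Lemma~\ref{4.3}, and in fact the whole point of reduce2, is that the small divisor is $\omega\cdot\ell+v\cdot k$ rather than $\omega\cdot\ell+k$, so the extra frequency $v=1/\beta$ must be treated as a genuine Diophantine parameter rather than as the fixed integer $1$. This is exactly what the extended non-resonance set \eqref{nonres} on the $(d+1)$-dimensional parameter $\tilde{\omega}$ is designed for, and it is the mechanism that eventually allows one to dispense with assumption \textbf{(C2)} required in Theorem~\ref{THM1}. I do not anticipate any additional technical obstacle beyond transcribing the arguments of Lemma~\ref{4.3} with $v\mathcal{K}$ in place of $\mathcal{K}$ throughout.
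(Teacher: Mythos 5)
Your proposal is correct and follows the same route as the paper: the paper's proof simply says that the argument is "almost the same as Lemma \ref{4.3}" with the homological equation becoming $\mathrm{i}(\omega\cdot\ell+v\cdot k)\hat{\mathcal{B}}_{\ell,k}=\hat{\mathcal{W}}_{\ell,k}$ and the non-resonance set \eqref{nonres} supplying the needed lower bound. You have filled in the standard details (the $\kappa$-extension, the identity $v\,\partial_\kappa = \mathrm{i}[v\mathcal{K},\cdot]$, the Lipschitz splitting, and the Hermitian symmetry) in a way consistent with the paper's terse treatment, so there is nothing substantive to flag.
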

\begin{proof}
The proof is almost the same as Lemma \ref{4.3}.  The only difference is the homological equation \eqref{h1} is transformed to
\begin{equation}
\mathrm{i}(\omega\cdot \ell+v\cdot k)\hat{\mathcal{B}}_{\ell,k}=\hat{\mathcal{W}}_{\ell,k}, \quad  (\ell,k) \neq (0, 0).
\end{equation}
Using the non-resonance conditions \eqref{nonres}, we can obtain the conclusion by the same  way with Lemma \ref{4.3}.
\end{proof}
From Lemma \ref{7.5}  and Lemma \ref{reduce2}, we can repeat the process of Theorem \ref{3.0} without significant changes. We fix the number of regularization step as $$M:=1+4m.$$

After $M$ steps of regularization, the original equation \eqref{eq5} is transformed to
\begin{equation}\label{R2}
\mathrm{i}\omega \cdot \partial_{\theta}u=\mathbf{\Lambda}^0u+ \mathbf{P}^0u,
\end{equation}
where $\mathbf{\Lambda}^0=\mathcal{K}+\mathcal{Q}+\varepsilon \mathcal{Z}^M$ and  $\mathbf{P}^0 =\varepsilon\mathcal{W}^M$.
Denoting $(\mu_{j,n})_{n=1,2}$ as the eigenvalues of the  block $\Lambda_{j}$,
it has the asymptotic expression
\begin{equation}
\mu_{j,n}=v\cdot j+z(j,\mathfrak{m},v)+p_{j,n}(\tilde{\omega}), \quad j \in \mathbb{N}, \ n=1,2,
\end{equation}
where $|z(j,\mathfrak{m},v)|^{\mathcal{L}ip} \leq \frac{1}{4}$ and $|p_{j,n}|^{lip} \leq C \varepsilon$.\\

\textbf{The difference in KAM reducibility:}\

After finite times KAM iteration, the equation \eqref{R2} is converted to
\begin{equation}\label{R3}
\mathrm{i}\omega \cdot \partial_{\theta}u=\mathbf{\Lambda}^ku+ \mathbf{P}^{k}u.
\end{equation}
Furthermore, denoting $(\mu^k_{j,n})_{n=1,2}$ as the eigenvalues of the block $\Lambda^k_{j}$, it has the asymptotic expression
\begin{equation}
\mu^k_{j,n}=v j+z(j,\mathfrak{m},v)+p^k_{j,n}(\tilde{\omega}),\quad j \in \mathbb{N}, \ n=1,2,
\end{equation}
where $|p^k_{j,n}|^{lip} \leq C (\epsilon+\varepsilon)$.

Hence, we can define the non-resonance  set $\tilde{\Omega}_{k+1,\alpha}(\tilde{\omega})$ at the $k+1^{th}$ step reducibility as
\begin{equation}\label{nonres2}
\begin{split}
\widetilde{\Omega}_{k+1,\alpha}:=\Big\{\tilde{\omega} \in \tilde{ \Omega}_{k,\alpha}:&|\omega \cdot \ell+\mu^{k}_{i,n}-\mu^{k}_{j,n'}|\geq  \frac{\alpha}{N_k^{\tau}\langle i \rangle^{\sigma} \langle j \rangle^{\sigma}}, \\
&\forall i,j\in\mathbb{ N}, \quad |\ell| \leq N_k, \quad n, n'=1,2, \quad (\ell,i,j) \neq (0,i,i)
\Big\}.
\end{split}
\end{equation}

\begin{rem}In order to ensure that the gap of eigenvalues greater than some constant in Lemma \ref{5.7},
the condition \textbf{(C2)} is indispensable. However, by taking the length of space torus  as new parameters, there are some new phenomenons in considering the non-resonance conditions \eqref{nonres2}.
\end{rem}

\begin{lem}
$$meas(\tilde{\Omega}_{k,\alpha}\backslash \tilde{\Omega}_{k+1,\alpha}) \leq C \alpha N^{-1}_{k}.$$
\end{lem}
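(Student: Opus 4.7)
The plan is to mimic Lemma \ref{5.8}, but with the extra parameter $v=1/\beta$ playing an essential role both in the asymptotic form of the eigenvalues and in controlling the small denominators. First I would decompose $\tilde{\Omega}_{k,\alpha}\setminus\tilde{\Omega}_{k+1,\alpha}$ as a union of resonance sets $R_{\ell i j n n'}$, where $\ell\in\Z^d$ with $|\ell|\leq N_k$, $i,j\in\mathbb{N}$ with $|i-j|\leq N_k$, $(\ell,i,j)\neq(0,i,i)$, and $n,n'\in\{1,2\}$, with
\[
R_{\ell i j n n'}=\Big\{\tilde\omega\in\tilde\Omega_{k,\alpha}:|\omega\cdot\ell+\mu^k_{i,n}(\tilde\omega)-\mu^k_{j,n'}(\tilde\omega)|<\tfrac{\alpha}{N_k^\tau\langle i\rangle^\sigma\langle j\rangle^\sigma}\Big\}.
\]

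For each such resonance set I would identify a direction in $\tilde\omega$-space along which the Lipschitz function
\[
g_{\ell i j n n'}(\tilde\omega):=\omega\cdot\ell+\mu^k_{i,n}(\tilde\omega)-\mu^k_{j,n'}(\tilde\omega)
\]
has a uniformly positive lower incremental ratio. Plugging in $\mu^k_{j,n}=vj+z(j,\mathfrak{m},v)+p^k_{j,n}(\tilde\omega)$, I would argue case by case. If $\ell\neq 0$, move $\tilde\omega$ along $(\ell/|\ell|,0)$, so that only $p^k_{i,n}-p^k_{j,n'}$ varies in the eigenvalue difference, contributing at most $2C(\epsilon+\varepsilon)$, while $\omega\cdot\ell$ contributes $|\ell|\geq 1$; the net rate is therefore $\geq 1/2$. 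If instead $\ell=0$ and $i\neq j$, move $\tilde\omega$ along the $v$-direction: the leading piece $v(i-j)$ contributes $|i-j|\geq 1$, the term $z(i,\mathfrak m,v)-z(j,\mathfrak m,v)$ contributes at most $1/2$ by the uniform bound $|z(\cdot,\mathfrak m,v)|^{\mathcal{L}ip}\leq 1/4$, and the $p^k$-difference contributes $O(\epsilon+\varepsilon)$, so the rate is still $\geq 1/4$ for $\epsilon,\varepsilon$ small.

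Applying the one-dimensional sub-level set estimate along this direction and Fubini on the transverse slices then yields
\[
\meas(R_{\ell i j n n'})\leq \frac{C\alpha}{N_k^\tau\langle i\rangle^\sigma\langle j\rangle^\sigma}.
\]
Summing over $|\ell|\leq N_k$ (producing $\lesssim N_k^d$ multi-indices), over all $i,j\in\mathbb{N}$, and over $n,n'\in\{1,2\}$, convergence of $\sum_{i,j}\langle i\rangle^{-\sigma}\langle j\rangle^{-\sigma}$ for $\sigma>1$ gives the target bound
\[
\meas(\tilde{\Omega}_{k,\alpha}\setminus\tilde{\Omega}_{k+1,\alpha})\leq C\alpha N_k^{d-\tau}\leq C\alpha N_k^{-1},
\]
using $\tau>d+1$ from \eqref{5.45}.

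The hard part will be the case $\ell=0$, $i\neq j$: the trivial gap $|\lambda_{i,n}-\lambda_{j,n'}|\geq\tfrac{c_0}{2}|i-j|$ exploited on $\T$ in Lemma \ref{5.8} is no longer available, because $v\cdot j$ now genuinely moves with the parameter $v$. The decisive new ingredient is to treat $v$ itself as a transversal parameter and to verify that the Lipschitz contribution of $z(i,\mathfrak m,v)-z(j,\mathfrak m,v)$ stays strictly below the linear gap $|i-j|$ \emph{uniformly in $i,j$}; this is precisely the role of the uniform estimate $|z(\cdot,\mathfrak m,v)|^{\mathcal{L}ip}\leq 1/4$ recorded in the preceding discussion.
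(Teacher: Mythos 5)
Your proof is correct and essentially matches the paper's argument: same decomposition into resonance sets $R_{\ell ijnn'}$, same use of the one-dimensional sublevel estimate plus Fubini, and the same crucial observation that the extended parameter $v=1/\beta$ supplies transversality in the $\ell=0$, $i\neq j$ case (where the constant gap of Lemma \ref{5.8} is unavailable), with the uniform bound $|z(\cdot,\mathfrak m,v)|^{\mathcal{L}ip}\le 1/4$ keeping the $z$-contribution strictly below the linear part. The only cosmetic difference is that you split into the cases $\ell\neq 0$ (moving along $(\ell/|\ell|,0)$) and $\ell=0$ (moving along $v$), whereas the paper selects the single unified direction $(\ell,i-j)/|(\ell,i-j)|\in\mathbb{R}^{d+1}$ for all $(\ell,i,j)\neq(0,i,i)$; both routes give the same incremental lower bound $\geq 1/4$ and hence the same measure estimate.
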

\begin{proof}
Considering the Lipschitz function $g(\tilde{\omega})$,
\begin{equation}
g(\tilde{\omega})=\omega \cdot \ell +\mu_{i,n}^k-\mu_{j,n'}^k=\tilde{\omega} \cdot(\ell, i-j)+z(i,\mathfrak{m},v)-z(j,\mathfrak{m},v)+p^k_{i,n}(\tilde{\omega})-p^k_{j,n'}(\tilde{\omega}).
\end{equation}
For any $(\ell,i-j) \neq 0$, we can write
\begin{equation}
\widetilde{\omega}=\frac{\ell,i-j}{|(\ell,i-j)|}s+\omega_1, \quad \omega_1 \in \mathbb{R}^{d+1}, \quad \omega_1 \cdot (\ell,i-j)=0,
\end{equation}
and
$$g(s)=|(\ell,i-j)| s + z(i,v(s))-z(j,v(s))+p^k_{i,n}(\tilde{\omega}(s))-p^k_{j,n'}(\tilde{\omega}(s)).$$

Subsequently, we have
\begin{equation}
\begin{split}
|g(s_1)-g(s_2)|\geq& |(\ell,i-j)| |s_1-s_2|-\Big(|z(i,v(s_1))-z(i,v(s_2))|+|z(j,v(s_1))-z(j,v(s_2))|\Big)\\
&-\Big(|p^k_{i,n}(\tilde{\omega}(s_1))-p^k_{i,n}(\tilde{\omega}(s_2))|+|p^k_{j,n'}(\tilde{\omega}(s_1))-p^k_{j,n'}(\tilde{\omega}(s_2))|\Big)\\
\geq& (1-\frac{1}{2}-C(\varepsilon+\epsilon))|s_1-s_2| \geq \frac{1}{4}|s_1-s_2|.
\end{split}
\end{equation}
which implies
\begin{equation}
meas \Big\{s\in \mathbb{R}: g_{\ell ijvv'}(s) <\frac{\alpha}{N^{\tau}_k \langle i\rangle^{\sigma} \langle j\rangle^{\sigma}}   \Big\} <\frac{4\alpha}{N^{\tau}_k \langle i\rangle^{\sigma} \langle j\rangle^{\sigma} }.
\end{equation}
The rest of proof is the same as Lemma \ref{5.7}.
\end{proof}

\section{Appendix B}
\subsection{Properties of pseudo-differential operators}
\begin{lem}(\cite{Bam019},Lemma A.1)\label{8.1}Let $\eta<1$ and $G \in C^{\infty}(\mathbb{T}^d,OPS^{\eta})$ be such that $G(\theta)+G^*(\theta)=0$ and let $e^{tG}$ be the flow of the autonomous PDE
$$\partial_{t}u=G(t)u, \quad t\in [-1,1] $$
$\mathbf{1}$: $\forall \sigma>0,$ $e^{tG} \in \mathcal{L}(\mathrm{H}^{\sigma})$.\\
$\mathbf{2}$: $\forall \sigma>0,$ $\forall \alpha \in \mathbb{N}^n$, $\partial^{\alpha}_{\theta}e^{tG}(\theta) \in  \mathcal{L}(\mathrm{H}^{\sigma},\mathrm{H}^{\sigma-\eta|\alpha|}).$\\
$\mathbf{3}$: If $G \in \mathcal{L}ip(\Omega, C^{\infty}(\mathbb{T}^d,OPS^{\eta}))$; $ \partial_{\theta}^{\alpha}e^{tG}(\theta,\omega) \in \mathcal{L}ip(\Omega, \mathcal{L}(\mathrm{H}^{\sigma},\mathrm{H}^{\sigma-\eta(|\alpha|+1)})) $, $\forall \sigma>0.  \alpha\in \mathbb{N}^d$.
\end{lem}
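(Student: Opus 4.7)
The plan is to treat the three assertions in order, with the skew-adjointness $G + G^* = 0$ driving every energy estimate and a Duhamel expansion handling derivatives in $\theta$ and differences in $\omega$.

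For part $\mathbf{1}$, I would first regularise $G$ by a symmetric spectral truncation $G_N := \Pi_N G \Pi_N$ (with $\Pi_N$ the projection onto Fourier modes $|j| \le N$), which is bounded on every $\mathrm{H}^\sigma$ and still satisfies $G_N + G_N^* = 0$. The ODE $\partial_t u = G_N u$ then has a classical flow $e^{tG_N}$. Using skew-adjointness,
\begin{equation*}
\tfrac{d}{dt}\|u\|_{\mathrm{H}^\sigma}^2 = 2\operatorname{Re}\langle \langle D\rangle^\sigma G_N u,\langle D\rangle^\sigma u\rangle = 2\operatorname{Re}\langle [\langle D\rangle^\sigma,G_N] u,\langle D\rangle^\sigma u\rangle,
\end{equation*}
since the $G_N$-part of the commutator becomes skew and has vanishing real part. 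Symbolic calculus gives $[\langle D\rangle^\sigma, G] \in OPS^{\sigma+\eta-1}$, and since $\eta<1$ this operator sends $\mathrm{H}^\sigma$ into $\mathrm{H}^{1-\eta} \hookrightarrow L^2$ with norm bounded uniformly in $N$ by $\chi^\eta_{\rho}(G)$ for some finite $\rho$. Gronwall then yields $\|e^{tG_N}u_0\|_{\mathrm{H}^\sigma} \le e^{C|t|}\|u_0\|_{\mathrm{H}^\sigma}$ uniformly in $N$. A standard compactness/limit argument (using that $G_N \to G$ strongly on $\mathrm{H}^{\sigma+\eta}$) identifies the limit as a strongly continuous flow on $\mathrm{H}^\sigma$ solving $\partial_t u = Gu$, which gives $e^{tG}\in \mathcal{L}(\mathrm{H}^\sigma)$.

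For part $\mathbf{2}$, I would apply $\partial_\theta$ to the semigroup identity and use Duhamel: formally,
\begin{equation*}
\partial_{\theta_j} e^{tG(\theta)} = \int_0^t e^{(t-s)G(\theta)}\, (\partial_{\theta_j} G(\theta))\, e^{s G(\theta)}\, ds.
\end{equation*}
Since $\partial_{\theta_j} G \in C^\infty(\mathbb T^d, OPS^\eta)$, part $\mathbf{1}$ applied to both flow factors and the mapping property $OPS^\eta : \mathrm{H}^\sigma \to \mathrm{H}^{\sigma-\eta}$ show $\partial_{\theta_j} e^{tG}(\theta) \in \mathcal{L}(\mathrm{H}^\sigma,\mathrm{H}^{\sigma-\eta})$. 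Higher-order derivatives follow by induction on $|\alpha|$: Leibniz-differentiating the integrand produces a finite sum of terms
\begin{equation*}
\int \cdots \int e^{s_0 G}\,(\partial_\theta^{\alpha_1} G)\, e^{s_1 G}\,(\partial_\theta^{\alpha_2} G)\, \cdots\, (\partial_\theta^{\alpha_k} G)\, e^{s_k G}\, ds,
\end{equation*}
with $\alpha_1+\cdots+\alpha_k = \alpha$ and $k \le |\alpha|$. Each of the $k$ factors $\partial_\theta^{\alpha_i} G$ lies in $OPS^\eta$ (differentiation in $\theta$ does not change the order), so contributes a loss of $\eta$ to the Sobolev exponent. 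Rearranging via part $\mathbf{1}$ to move each $e^{s_i G}$ to its natural Sobolev scale, the total loss is $k\eta \le |\alpha|\eta$, which is dominated by $\eta|\alpha|$ as claimed.

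For part $\mathbf{3}$, the Lipschitz dependence comes from the formula
\begin{equation*}
e^{tG(\theta,\omega_1)} - e^{tG(\theta,\omega_2)} = \int_0^t e^{(t-s)G(\theta,\omega_1)}\bigl(G(\theta,\omega_1)-G(\theta,\omega_2)\bigr)e^{sG(\theta,\omega_2)}\, ds,
\end{equation*}
combined with the analogous Duhamel expansion for $\partial_\theta^\alpha$. The hypothesis $G \in \mathcal{L}ip(\Omega,C^\infty(\mathbb T^d,OPS^\eta))$ means $(G(\omega_1)-G(\omega_2))/|\omega_1-\omega_2|$ is uniformly an $OPS^\eta$ operator, so it costs one additional loss of $\eta$ in Sobolev regularity. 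Putting this together with the $|\alpha|\eta$ loss from part $\mathbf{2}$ yields the claimed total loss $\eta(|\alpha|+1)$. The main obstacle throughout is that $G$ itself is unbounded on Sobolev spaces (it belongs to $OPS^\eta$ with possibly $\eta>0$, so it is not even bounded from $\mathrm{H}^\sigma$ to itself); the regularisation by $\Pi_N$ and the uniform Gronwall bound coming from the exact cancellation of the top-order term via skew-adjointness is the one place where $\eta<1$ is critical, since it is precisely what makes the commutator remainder subprincipal with respect to $\langle D\rangle^\sigma$.
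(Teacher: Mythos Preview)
The paper does not actually prove this lemma: it is stated in Appendix~B with the citation \cite{Bam019}, Lemma~A.1, and no argument is given. So there is no ``paper's own proof'' to compare against; the authors simply import the result from Bambusi--Langella--Montalto.

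Your sketch is the standard route and is essentially how such results are proved in the cited literature. The energy estimate via the commutator $[\langle D\rangle^\sigma,G]\in OPS^{\sigma+\eta-1}$ together with skew-adjointness and Gronwall is exactly the mechanism that makes $\eta<1$ the correct threshold; the Duhamel expansion for $\theta$-derivatives and for $\omega$-differences is likewise the expected argument, and your bookkeeping of the Sobolev losses ($k\eta\le|\alpha|\eta$ for part~$\mathbf{2}$, one extra $\eta$ for the Lipschitz increment in part~$\mathbf{3}$) matches the stated conclusions. One small point worth tightening: in part~$\mathbf{1}$ you wrote that the commutator ``sends $\mathrm{H}^\sigma$ into $\mathrm{H}^{1-\eta}$''; what you actually need (and what follows from $[\langle D\rangle^\sigma,G]\in OPS^{\sigma+\eta-1}$) is that it maps $\mathrm{H}^\sigma\to L^2$ with operator norm controlled by a seminorm of $G$, so that the pairing with $\langle D\rangle^\sigma u\in L^2$ closes the Gronwall inequality. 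Also, when you regularise by $\Pi_N G\Pi_N$ you should check that the commutator bound is uniform in $N$; this follows because $\Pi_N$ is an $OPS^0$ operator with seminorms bounded independently of $N$. With those clarifications the argument is complete.
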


\begin{rem}\label{8.2}
Let $A(\theta) \in \mathcal{L}ip(\Omega, C^{\infty}(\mathbb{T}^d,OPS^m))$ and $G\in \mathcal{L}ip(\Omega, C^{\infty}(\mathbb{T}^d,OPS^\eta))$ with $\eta<1$. If $\forall j \in \mathbb{N}$, we define
$$Ad^0_{G}A=A, \quad Ad^{j+1}_{G}A=[G,Ad^{j}_{G}],$$
then $Ad^{j}_{G}A \in \mathcal{L}ip(\Omega, C^{\infty}(\mathbb{T}^d,OPS^{m-j(1-\eta)}))$.
\end{rem}

\begin{lem}(\cite{Bam019},Lemma A.2)\label{8.3}
Let $A(\theta) \in \mathcal{L}ip(\Omega, C^{\infty}(\mathbb{T}^d,OPS^m))$ and $G\in \mathcal{L}ip(\Omega, C^{\infty}(\mathbb{T}^d,OPS^\eta))$ with $\eta<1$  such that $G(\theta)+G^*(\theta)=0$. Then
\begin{equation}
e^{tG}Ae^{-tG} \in \mathcal{L}ip(\Omega, C^{\infty}(\mathbb{T}^d,OPS^m)).
\end{equation}
\end{lem}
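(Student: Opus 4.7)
The plan is to use the Dyson/Lie series expansion combined with Remark \ref{8.2}. Define the conjugated operator $B(t):=e^{tG}Ae^{-tG}$. Formally differentiating in $t$, we find that $B(t)$ solves the Heisenberg equation
\begin{equation}
\dot{B}(t)=[G,B(t)]=Ad_{G}B(t), \qquad B(0)=A,
\end{equation}
so at least formally
\begin{equation}
B(t)=\sum_{j\geq 0}\frac{t^{j}}{j!}\,Ad_{G}^{j}A.
\end{equation}
By Remark \ref{8.2}, the $j$-th term belongs to $\mathcal{L}ip(\Omega,C^{\infty}(\mathbb{T}^{d},OPS^{m-j(1-\eta)}))$. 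Because $\eta<1$, the order of the summands decreases linearly in $j$, so at each fixed target order only finitely many summands are ``active'' at that order, while the rest have arbitrarily negative order and are controlled trivially inside $OPS^{m}$.

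The first technical step is quantitative: I would iterate the pdo composition/commutator estimate implicit in Remark \ref{8.2} to obtain bounds of the shape
\begin{equation}
\chi^{m-j(1-\eta)}_{\rho,s}\bigl(Ad_{G}^{j}A\bigr)\;\leq\;C^{j}\,j!\;\bigl(\chi^{\eta}_{\rho',s'}(G)\bigr)^{j}\,\chi^{m}_{\rho'',s''}(A)
\end{equation}
for suitably enlarged indices $\rho',\rho'',s',s''$. This is the standard iteration of the symbolic calculus and is essentially a Faà di Bruno/Leibniz bookkeeping combined with the fact that each commutator bracket on $\mathbb{T}$ drops the order by $1-\eta$ while producing a seminorm of $G$. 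The $j!$ growth is then absorbed by the $1/j!$ in the Dyson series, yielding absolute convergence of the series in every seminorm $\chi^{m}_{\rho,s}$ for $t$ in any bounded interval. This both justifies the formal manipulation and shows $B(t)\in C^{\infty}(\mathbb{T}^{d},OPS^{m})$.

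For the Lipschitz dependence on $\omega$, I would apply the difference operator $\Delta_{\omega_{1},\omega_{2}}$ term-by-term to the series: each $Ad_{G}^{j}A$ is Lipschitz in $\omega$ by Remark \ref{8.2}, and an analogous bound on $\chi^{m-j(1-\eta),\mathcal{L}ip}_{\rho,s}(Ad_{G}^{j}A)$ follows from the same iteration combined with the Leibniz-type rule $\Delta(GX)=(\Delta G)X+G(\Delta X)$. Summing in $j$ again using $1/j!$ gives the Lipschitz control of the limit. Identification of the series with $e^{tG}Ae^{-tG}$ itself follows from uniqueness for the Heisenberg ODE, whose well-posedness on Sobolev scales is already supplied by Lemma \ref{8.1}.

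The main obstacle is the combinatorial bookkeeping in the iterated commutator estimate, specifically tracking how the seminorm indices $\rho,s$ must be enlarged at each step so that the $j$-fold commutator is controlled by a fixed finite seminorm of $G$ raised to the $j$-th power, times $C^{j}j!$. Once that estimate is in hand, absolute convergence of the Dyson series in the Fréchet topology of $C^{\infty}(\mathbb{T}^{d},OPS^{m})$ and the Lipschitz extension are routine; everything else (existence of the flow, action on Sobolev spaces) is already packaged in Lemma \ref{8.1} and Remark \ref{8.2}.
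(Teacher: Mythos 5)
The paper does not prove this lemma; it only cites \cite{Bam019}, Lemma~A.2, where the proof is the standard Egorov-type Taylor expansion with a smoothing integral remainder. Your Dyson-series approach differs and has a genuine gap: the claimed estimate
\begin{equation*}
\chi^{m-j(1-\eta)}_{\rho,s}\bigl(Ad_{G}^{j}A\bigr)\;\leq\;C^{j}\,j!\;\bigl(\chi^{\eta}_{\rho',s'}(G)\bigr)^{j}\,\chi^{m}_{\rho'',s''}(A)
\end{equation*}
with a \emph{fixed} seminorm of $G$ cannot be arranged. In the symbolic calculus on $\T$, estimating a given seminorm $\chi^{\,\cdot}_{\rho,s}$ of a commutator $[G,X]$ consumes one extra derivative of each factor, so $\chi^{m-j(1-\eta)}_{\rho,s}(Ad_{G}^{j}A)$ is controlled only by seminorms $\chi^{\eta}_{\rho+j,\,s}(G)$ and $\chi^{m}_{\rho+j,\,s}(A)$ whose index grows with $j$. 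Since $G$ and $A$ are merely $C^{\infty}$ (not analytic), the quantities $\chi^{\eta}_{\rho+j,\,s}(G)$ can grow arbitrarily fast in $j$; neither $1/j!$ nor any fixed constant $C$ controls the tail. Consequently the asserted absolute convergence of $\sum_{j}\frac{t^{j}}{j!}Ad_{G}^{j}A$ in the Fr\'echet topology of $C^{\infty}(\mathbb{T}^{d},OPS^{m})$ is not established, and in general is false.

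The standard route avoids this issue entirely: for each fixed $N$ write
\begin{equation*}
e^{tG}Ae^{-tG}=\sum_{j=0}^{N}\frac{t^{j}}{j!}\,Ad_{G}^{j}A+R_{N+1}(t),\qquad
R_{N+1}(t)=\int_{0}^{t}\frac{(t-s)^{N}}{N!}\,e^{sG}\bigl(Ad_{G}^{N+1}A\bigr)e^{-sG}\,ds.
\end{equation*}
The finite sum lies in $\mathcal{L}ip(\Omega,C^{\infty}(\mathbb{T}^{d},OPS^{m}))$ by Remark~\ref{8.2}. For the remainder one does \emph{not} attempt a symbol estimate directly: $Ad_{G}^{N+1}A\in OPS^{m-(N+1)(1-\eta)}$ is arbitrarily smoothing for $N$ large, and by Lemma~\ref{8.1} the flow $e^{\pm sG}$ maps every $\mathrm{H}^{\sigma}$ boundedly (with smooth $\theta$-dependence and Lipschitz $\omega$-dependence). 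Hence $R_{N+1}(t)$, together with its $\theta$-derivatives, is bounded $\mathrm{H}^{-K}\to\mathrm{H}^{K}$ for any prescribed $K$ once $N$ is large enough; such an operator belongs to $OPS^{-\infty}$. Since this holds for every $N$, the conjugated operator is in $OPS^{m}$ modulo $OPS^{-\infty}$, hence in $OPS^{m}$, and the Lipschitz regularity in $\omega$ propagates through the finitely many explicit terms and the remainder. If you reformulate your argument as this finite Taylor expansion plus smoothing remainder, the proof is correct; the infinite Dyson series as written does not close.
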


\begin{rem}\label{8.4}From Theorem A.0.9 in \cite{T91}, one has that if $A(\theta) \in \mathcal{L}ip(\Omega, C^{\infty}(\mathbb{T}^d,OPS^m))$, then $\forall \alpha \in \mathbb{N}$,
$$e^{\mathrm{i}\kappa\cdot \mathcal{K}} A e^{-\mathrm{i}\kappa\cdot \mathcal{K}}, \quad \partial^{\alpha}_{\kappa}(e^{\mathrm{i}\kappa\cdot \mathcal{K}} A e^{-\mathrm{i}\kappa\cdot \mathcal{K}}) \in \mathcal{L}ip(\Omega, C^{\infty}(\mathbb{T}^d,OPS^m)).$$
\end{rem}
In the next Proposition, we essentially prove that pseudo-differential operators as in Definition \ref{defn:symbols20}  have  matrix presentation,  which belong to the classes $\mathcal{M}^{s,\mathcal{L}ip}_{s,s} $  extended from Definition \ref{3.40}.
\begin{prop}
\label{8.5}
Let $\mathrm{F} \in \mathcal{L}ip(\Omega,C^{\infty}(\mathbb{T}^d,OPS^{\mu}))$. For any $s>\frac{d+1}{2}$, the matrix of the operator $ \langle D\rangle^\gamma \mathrm{F}\langle D\rangle^\zeta$, $\gamma+\zeta+\mu\geq0$  belongs to $\mathcal{M}^{s,\mathcal{L}ip}_{s,s}$. Moreover, there exists $\sigma>0$, such that
\begin{equation}
\label{est:optoma}
\|\langle D\rangle^\gamma \mathrm{F}\langle D\rangle^\zeta\|_{s,s}^{s,\mathcal{L}ip} \leq C \ \chi^{0, \mathcal{L}ip}_{s+\sigma,s+\sigma}(F).
\end{equation}

\end{prop}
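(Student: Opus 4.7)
The plan is to identify the block matrix entries of $\langle D\rangle^\gamma F\langle D\rangle^\zeta$ with weighted Fourier coefficients of the symbol of $F$, and then read off the required decay from the symbol estimates via integration by parts.

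First I would fix a symbol $f(\theta,x,j)$ for $F(\theta,\omega)$, so that $F(\theta)\hat e_j=\sum_i \hat f(\theta,i-j,j)\,\hat e_i$, where the hat on $f$ denotes the Fourier coefficient in $x$. Taking also the Fourier transform in $\theta$, the four scalar entries of the block $\widehat F^{[i]}_{[j]}(\ell)$ take the form $\hat f(\ell,n-m,m)$ with $n\in\{\pm i\}$, $m\in\{\pm j\}$, so the argument $n-m$ is either $\pm(i-j)$ or $\pm(i+j)$. After conjugation by $\langle D\rangle^\gamma$ on the left and $\langle D\rangle^\zeta$ on the right, a typical entry becomes $\langle n\rangle^\gamma\hat f(\ell,n-m,m)\langle m\rangle^\zeta$ with $|n|=i$, $|m|=j$, $|i-j|=h$, and the $\|\cdot\|^s_{s,s}$-norm reduces to controlling
\[
\sum_{\ell\in\mathbb Z^d,\,h\in\mathbb N}\langle\ell,h\rangle^{2s}\sup_{|i-j|=h,\,n=\pm i,\,m=\pm j}\big|\langle n\rangle^\gamma\,\hat f(\ell,n-m,m)\,\langle m\rangle^\zeta\big|^2.
\]

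Next I would extract decay of these entries from the symbol bounds of Definition \ref{defn:symbols_semi}. The bound $|\partial_\theta^{\alpha'}\partial_x^\alpha\Delta^\beta f(\theta,x,j)|\leq C\,\chi^{\mu,\sup,\Omega}_{s+\sigma,s+\sigma}(F)\langle j\rangle^{\mu-\beta}$ for $|\alpha|+|\alpha'|+\beta\leq s+\sigma$ combines two mechanisms: integration by parts in $\theta$ produces the decay $\langle\ell\rangle^{-(s+\sigma)}$ on $\hat f(\ell,k,j)$, and an Abel-type summation by parts built from $\Delta$ converts $x$-regularity into decay in the discrete frequency $k$, yielding
\[
|\hat f(\ell,k,j)|\leq C\,\chi^{\mu,\sup,\Omega}_{s+\sigma,s+\sigma}(F)\,\langle\ell\rangle^{-(s+\sigma)}\langle k\rangle^{-(s+\sigma)}\langle j\rangle^\mu.
\]
Applying Peetre's inequality $\langle n\rangle^\gamma\lesssim\langle m\rangle^\gamma\langle n-m\rangle^{|\gamma|}$ transfers the factor $\langle n\rangle^\gamma$ onto $\langle m\rangle^\gamma$ at the cost of a polynomial loss $\langle n-m\rangle^{|\gamma|}$, which is absorbed by the symbol decay $\langle n-m\rangle^{-(s+\sigma)}$ since $|n-m|\in\{h,i+j\}$. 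Under the sign hypothesis on $\gamma+\zeta+\mu$ that keeps $\langle m\rangle^{\gamma+\zeta+\mu}$ uniformly bounded in $m$, the resulting pointwise estimate sums absolutely against $\langle\ell,h\rangle^{2s}$, provided $\sigma$ is chosen large enough, for instance $\sigma>s+\tfrac{d+1}{2}+|\gamma|+|\zeta|$.

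The Lipschitz bound is obtained verbatim by replacing $F$ with the difference $F(\omega_1)-F(\omega_2)$ and invoking the Lipschitz component $\chi^{\mu,\operatorname{lip},\Omega}_{s+\sigma,s+\sigma}(F)$ of the seminorm. The main technical obstacle is the anti-diagonal entries of the block whose $x$-frequency $i+j$ can be much larger than $h$: one must check that the rapid symbol decay in $n-m$ indeed dominates both the Peetre loss and the factor $\langle i\rangle^\gamma\langle j\rangle^\zeta$ uniformly over the supremum set $|i-j|=h$ and over the sign choices in $n,m$. Once this bookkeeping of indices is arranged, no individual step is deep; the proposition is simply a translation of smoothness in $(\theta,x)$ and the order-$\mu$ symbol estimate into decay of the matrix entries.
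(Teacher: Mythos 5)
Your proposal is correct and rests on the same core mechanism as the paper: identify the block entries $\hat F^{[i]}_{[j]}(\ell)$ with Fourier coefficients of the symbol and extract decay in $|i-j|$ (and in $\ell$) from the symbol's regularity. The genuine difference is in how the weights $\langle D\rangle^\gamma$, $\langle D\rangle^\zeta$ are handled. The paper first proves the unweighted case $\gamma=\zeta=0$ by integrating by parts in $x$ exactly $\tilde s = \lfloor s\rfloor + 2$ times, and then disposes of general $\gamma,\zeta$ in one line by observing that $\langle D\rangle^\gamma F\langle D\rangle^\zeta$ is again a pseudo-differential operator of nonpositive order, so the unweighted case applies to it. You instead keep the weights $\langle n\rangle^\gamma,\langle m\rangle^\zeta$ attached to each entry and absorb $\langle n\rangle^\gamma$ into $\langle m\rangle^\gamma$ by Peetre's inequality, paying a loss $\langle n-m\rangle^{|\gamma|}$ which the symbol's decay in the $x$-frequency then dominates. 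Your route is more elementary (no pdo composition theorem needed) at the cost of heavier index bookkeeping; the paper's is shorter but leans on symbolic calculus. Both are valid, and you correctly identify that the anti-diagonal entries with $x$-frequency $i+j$ must still be controlled by the fact that $|n-m|\ge |i-j|$ for every choice of signs. One small slip in terminology: you attribute the decay of $\hat f(\ell,k,j)$ in the discrete $x$-frequency $k$ to "Abel-type summation by parts built from $\Delta$", but $\Delta$ in this paper is the finite difference in the symbol variable $j$ (it controls the pdo order, not the $k$-decay); the $k$-decay comes from ordinary integration by parts in $x$, which is precisely what the paper does. Since your final bound $|\hat f(\ell,k,j)|\lesssim \langle\ell\rangle^{-(s+\sigma)}\langle k\rangle^{-(s+\sigma)}\langle j\rangle^{\mu}$ is correct, this is a labeling confusion rather than a gap. (You also correctly read the sign hypothesis as $\gamma+\zeta+\mu\le 0$, which is what the proof actually uses; the $\ge 0$ in the statement, and likewise the superscript $0$ in $\chi^{0,\mathcal{L}ip}_{s+\sigma,s+\sigma}(F)$, appear to be typographical.)
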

\begin{proof}
We start by proving the case $\gamma=\zeta=0$. Fix $s > \frac{d+1}{2}$, for any $m,n \in \mathbb{Z}$, we have
\begin{align}
\hat{\mathrm{F}}^{n}_{m}(\ell) &=\frac{1}{(2\pi)^{d}} \int_{\mathbb{T}^{d}}\hat{\mathrm{F}}(\ell)[e^{\mathrm{i}mx}]e^{-\mathrm{i}nx} dx\\
&=\frac{1}{(2\pi)^{d}} \int_{\mathbb{T}^{d}}f(x,m)(\ell)e^{\mathrm{i}(m-n)x}dx.
\end{align}
For the case $m \neq n$. Integrating by parts $\tilde{s}$ times in $x$, with $\tilde{s}=\lfloor s\rfloor+2$,
For any $n,m \in \mathbb{N}$, $n \neq m$, $\ell \in \mathbb{Z}^d$, one gets
\begin{equation}
\|\hat{\mathrm{F}}^{[n]}_{[m]}(\ell)\| \leq \sup_{|k|=m,|k^{'}|=n} |\hat{\mathrm{F}}^{k^{'}}_{k}(\ell)|\leq \frac{1}{|m-n|^{\tilde{s}}} \chi^{0}_{\tilde{s}}(\hat{\mathrm{F}}(\ell))
\end{equation}

For the case $m = n$, we can prove $\|\hat{\mathrm{F}}^{[n]}_{[n]}(\ell)\| \leq \chi^{0}_{\tilde{s}}(\hat{\mathrm{F}}(\ell))$ in a similar way.
Thus, we can get
\begin{equation}
\|\mathrm{F}\|^{s}_{s,s} \leq C \chi^{0}_{\tilde{s},s}(\mathrm{F}) \leq C^* \chi^{0}_{\tilde{s},\tilde{s}}(\mathrm{F}).
\end{equation}
For the other cases, the operator $\langle D \rangle^{\gamma} \mathrm{F } \langle D \rangle^{\zeta}$ belongs to $\mathcal{L}ip(\Omega,C^{\infty}(\mathbb{T}^d,OPS^{0}))$, so we have
\begin{equation}
\|\langle D \rangle^{\gamma} \mathrm{F } \langle D \rangle^{\zeta}\|^{s}_{s,s} \leq C^* \chi^{0}_{\tilde{s},\tilde{s}}(\langle D \rangle^{\gamma} \mathrm{F } \langle D\rangle^{\zeta} ) \leq C\chi^{\mu}_{\tilde{s},\tilde{s}}(\mathrm{F } ).
\end{equation}
\end{proof}

\begin{lem}\label{OPS}The operator $(-\partial_{xx}+\mathfrak{m}^2)^{\frac{1}{2}}-(-\partial_{xx})^{\frac{1}{2}}$ is a  pseudo-differential
operator of order $-1$.
\end{lem}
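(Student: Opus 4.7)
The plan is to view the operator as a Fourier multiplier with symbol $a(j) := (j^{2}+\mathfrak{m}^{2})^{1/2}-|j|$ for $j\in\mathbb{Z}$, and then verify that $a$ satisfies the symbol estimates of Definition \ref{defn:symbols_semi} with $m=-1$. Since $a$ does not depend on $x$, the conditions reduce to showing $|\Delta^{\beta}a(j)|\leq C_{\beta}\langle j\rangle^{-1-\beta}$ for every $\beta\in\mathbb{N}$.

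First I would perform the elementary algebraic rewrite
\begin{equation*}
a(j)=\frac{\mathfrak{m}^{2}}{(j^{2}+\mathfrak{m}^{2})^{1/2}+|j|},
\end{equation*}
which immediately yields the case $\beta=0$, namely $|a(j)|\lesssim \langle j\rangle^{-1}$. For general $\beta$, the natural approach is to relate the discrete differences of $a$ to derivatives of a smooth extension. For $j\geq 0$ one has $a(j)=g(j)$ with $g(\xi):=(\xi^{2}+\mathfrak{m}^{2})^{1/2}-\xi$, a function that is $C^{\infty}$ on all of $\mathbb{R}$, and similarly for $j\leq 0$ one uses $\tilde g(\xi)=(\xi^{2}+\mathfrak{m}^{2})^{1/2}+\xi$. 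By induction (or by an explicit computation of $g^{(\beta)}$ using the chain rule on $(\xi^{2}+\mathfrak{m}^{2})^{1/2}$), one checks $|g^{(\beta)}(\xi)|\lesssim \langle\xi\rangle^{-1-\beta}$ for all $\xi\geq 0$; the base cases $g'(\xi)=\xi(\xi^{2}+\mathfrak{m}^{2})^{-1/2}-1=O(\langle\xi\rangle^{-2})$ and $g''(\xi)=\mathfrak{m}^{2}(\xi^{2}+\mathfrak{m}^{2})^{-3/2}=O(\langle\xi\rangle^{-3})$ illustrate the pattern and the induction step is routine.

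Next, I would convert the derivative bounds into difference bounds via the standard identity $\Delta^{\beta}g(j)=\int_{[0,1]^{\beta}} g^{(\beta)}(j+t_{1}+\cdots+t_{\beta})\,dt_{1}\cdots dt_{\beta}$, which gives
\begin{equation*}
|\Delta^{\beta}a(j)|\leq \sup_{\xi\in[j,j+\beta]}|g^{(\beta)}(\xi)|\lesssim \langle j\rangle^{-1-\beta}
\end{equation*}
for $j\geq 0$ (and the analogous estimate for $j\leq -\beta$ via $\tilde g$). The handful of indices $-\beta<j<0$ where the two smooth branches overlap can be absorbed into the constant $C_{\beta}$ since they form a finite set on which $\langle j\rangle$ is bounded.

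The only genuinely delicate point is the interface near $j=0$: the function $|\xi|$ is not smooth there, so one cannot use a single global smooth extension of $a$. I would handle this by splitting the range of $j$ into $j\geq 0$ and $j\leq 0$, applying the smooth-extension argument on each side, and noting that the finitely many differences crossing zero contribute only bounded terms. Once the symbol estimates are in place, Definition \ref{defn:symbols20} gives that the Fourier multiplier with symbol $a$ lies in $OPS^{-1}$, which is the claim.
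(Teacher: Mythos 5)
Your proof is correct, and it takes a genuinely different route from the paper. You work entirely by hand: you represent the operator as the Fourier multiplier with symbol $a(j)=(j^{2}+\mathfrak{m}^{2})^{1/2}-|j|=\mathfrak{m}^{2}/\big((j^{2}+\mathfrak{m}^{2})^{1/2}+|j|\big)$ and verify the difference estimates $|\Delta^{\beta}a(j)|\lesssim\langle j\rangle^{-1-\beta}$ of Definition~\ref{defn:symbols_semi} directly, passing through the one-sided smooth extensions $g(\xi)=(\xi^{2}+\mathfrak{m}^{2})^{1/2}\mp\xi$, the derivative bounds $|g^{(\beta)}(\xi)|\lesssim\langle\xi\rangle^{-1-\beta}$, and the identity $\Delta^{\beta}g(j)=\int_{[0,1]^{\beta}}g^{(\beta)}(j+t_{1}+\cdots+t_{\beta})\,dt$. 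The paper instead invokes Colin de Verdi\`ere's normal form theorem (Theorem~1 of \cite{CdV79}): there is $\mathrm{K}\in OPS^{-1}$ commuting with $-\partial_{xx}$ such that $(-\partial_{xx}+\mathfrak{m}^{2})^{1/2}+\mathrm{K}$ has spectrum in $\mathbb{N}+c$; from $\lambda_j\to|j|$ one deduces $c=0$ and $\lambda_j=|j|$ for $|j|$ large, so $\mathrm{K}$ agrees, up to a finite-rank (hence $OPS^{-\infty}$) piece, with the negative of the Fourier multiplier, which therefore lies in $OPS^{-1}$. Both proofs are sound. Your argument is shorter, elementary, and fully self-contained for the torus; the paper's argument leans on heavier machinery but has the virtue that it survives unchanged on $\mathbb{S}^{2}$ or a general Zoll manifold, where the symbol of $\sqrt{-\Delta_g+\mathfrak{m}^2}-\sqrt{-\Delta_g}$ is not explicitly computable --- this matters because the paper later remarks it wants the same regularization step in that generality. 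Two harmless points to flag in your write-up: the constants $C_\beta$ necessarily depend on $\mathfrak{m}$ (the paper's conventions allow this), and the case $\mathfrak{m}=0$, permitted by the hypotheses, should be dispatched separately as the trivial zero operator.
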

\begin{proof}
From Theorem 1 in \cite{CdV79}, there exists a pseudo-differential operator  $\mathrm{K}$ of order $-1$, commuting with $-\partial_{xx}$ , such that
\begin{equation}\label{c11}
Spec[(-\partial_{xx}+\mathfrak{m}^2)^{\frac{1}{2}}+\mathrm{K}]\subseteq \mathbb{N}+c,\quad c\in \mathbb{R}.
\end{equation}
Since $-\partial_{xx}$ and  $\mathrm{K}$ can be diagonalized simultaneously, one can obtain that there exists an orthonormal basis $\Phi_{j},\Phi_{-j}$ of space $E_{j}:=span \{e^{\mathrm{i}jx}, e^{-\mathrm{i}jx}\}$, such that
$$\mathrm{K}\Phi_{j}=\eta_{j}\Phi_{j},\quad \mathrm{K}\Phi_{-j}=\eta_{-j}\Phi_{-j}. $$
Also, there exists an absolute constant $C$ such that
\begin{equation}\label{c22}
\eta_{j} \leq \frac{C}{\langle j\rangle}.
\end{equation}
 Hence, we see that
 \begin{equation}\label{c33}
 [(-\partial_{xx}+\mathfrak{m}^2)^{\frac{1}{2}}+\mathrm{K}]\Phi_{j}=\lambda_j\Phi_{j}=(|j|+\frac{c(\mathfrak{m},|j|)}{\langle j \rangle}+\eta_{j})\Phi_{j}.
 \end{equation}

From \eqref{c11}, \eqref{c22} and \eqref{c33}, we get that $c=0$ and $\lambda_j=|j|$, if $|j|$ is large enough. Furthermore, there exists a $N \in \mathbb{N}$, if $|j| \geq N$,  $\Phi_{j}$ and $\Phi_{-j}$ can be determined as $e^{\mathrm{i}jx}$ and  $e^{-\mathrm{i}jx}$. Finally, we can construct two symbol $k^1,k^2$ as
$$ k^1(x,j)=\left\{
\begin{aligned}
&0, \quad \quad \quad  \quad \quad  \quad \quad \quad  \quad  \quad |j|\leq N,\\
&\eta_j=(j^2+\mathfrak{m}^2)^{\frac{1}{2}}-|j|,\quad \ |j| \geq N,
\end{aligned}
\right.$$
$$
k^2(x,j)=\left\{
\begin{aligned}
&(j^2+\mathfrak{m}^2)^{\frac{1}{2}}-|j|, \quad  \quad |j|\leq N,\\
&0,  \quad \quad \quad  \quad \quad  \quad \quad  \quad  \  \ |j| \geq N.
\end{aligned}
\right.
$$

From the above argument, we know that $Op(k^1) \in OPS^{-1}$, $Op(k^2)$ is a finite rank operator and belongs to $OPS^{-\infty}$. We see that $(-\partial_{xx}+\mathfrak{m}^2)^{\frac{1}{2}}-(-\partial_{xx})^{\frac{1}{2}} = Op(k^1)+Op(k^2)$, which belongs to $ OPS^{-1}$.
\end{proof}

\subsection{Properties of Hermitian matrix}\

In this section, we recall some well known facts about Hermitian  operator in the finite dimension Hilbert space $\mathcal{H}$. Let $\mathcal{H}$ be a
finite dimensional Hilbert space of dimension $n$ equipped by the inner product
$(, )_\mathcal{H}$. For any Hermitian  operator $A$, we order its eigenvalues as
$spec(A) := {\lambda_1(A) \leq \lambda_2(A) \leq \cdots \leq \lambda_n(A)}.$

\begin{prop}\label{8.6}(Weyl's Perturbation Theorem)(\cite{R97}, Theorem III.2.1) Let $A$ and $B$ be Hermitian matrices. Then
\begin{equation}
|\lambda_k(A)-\lambda_k(B)|\leq \|A-B\|_{\mathcal{L}^2(\mathcal{H})},\  \forall k \in 1,\cdots,n.
\end{equation}
\end{prop}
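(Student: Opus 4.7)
The plan is to prove this classical Weyl inequality via the Courant--Fischer min-max characterization of eigenvalues of Hermitian operators on a finite dimensional Hilbert space. Write $B = A + E$ with $E := B - A$, which is itself Hermitian since $A$ and $B$ are, and observe that by definition of the operator norm one has the scalar inequality $|(Ex, x)_{\mathcal{H}}| \leq \|E\|_{\mathcal{L}^2(\mathcal{H})}$ for every unit vector $x \in \mathcal{H}$. Consequently, for every unit vector $x$,
\begin{equation*}
(Ax,x)_{\mathcal{H}} - \|E\|_{\mathcal{L}^2(\mathcal{H})} \ \leq \ (Bx,x)_{\mathcal{H}} \ \leq \ (Ax,x)_{\mathcal{H}} + \|E\|_{\mathcal{L}^2(\mathcal{H})}.
\end{equation*}

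Next, I would invoke the Courant--Fischer min-max formula, which, since $A$ and $B$ are Hermitian, gives
\begin{equation*}
\lambda_k(A) \ = \ \min_{\substack{S \subseteq \mathcal{H} \\ \dim S = k}} \ \max_{\substack{x \in S \\ \|x\|=1}} \, (Ax,x)_{\mathcal{H}},
\end{equation*}
and the analogous identity for $B$. Plugging the pointwise bound above into the inner maximum and then into the outer minimum over $k$-dimensional subspaces yields
\begin{equation*}
\lambda_k(B) \ \leq \ \lambda_k(A) + \|E\|_{\mathcal{L}^2(\mathcal{H})},
\end{equation*}
and the symmetric role of $A$ and $B$ (swap them and use $\|E\| = \|-E\|$) gives the reverse inequality $\lambda_k(A) \leq \lambda_k(B) + \|E\|_{\mathcal{L}^2(\mathcal{H})}$. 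Combining these two bounds produces the desired estimate $|\lambda_k(A) - \lambda_k(B)| \leq \|A - B\|_{\mathcal{L}^2(\mathcal{H})}$ for every $k = 1, \dots, n$.

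There is essentially no obstacle here once the min-max principle is available: the only subtlety is to make sure that the same family of $k$-dimensional subspaces is tested for both $A$ and $B$, so that the additive perturbation $\pm \|E\|$ commutes past the outer $\min$ and the inner $\max$. Since $(Ex,x)_{\mathcal{H}}$ is a real-valued functional bounded uniformly in $\|x\|=1$ by the operator norm of the Hermitian matrix $E$, no additional spectral information about $E$ is needed. This is precisely the argument used in \cite{R97}, Theorem III.2.1, to which the paper already points.
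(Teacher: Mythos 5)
Your proof is correct, and since the paper gives no proof of its own but simply cites Bhatia \cite{R97}, Theorem III.2.1, there is nothing in the paper to compare against beyond that pointer. The Courant--Fischer min-max argument you lay out is precisely the standard proof (and indeed the one in Bhatia): write $B = A + E$, bound the quadratic form $(Bx,x)$ by $(Ax,x) \pm \|E\|$ uniformly over unit vectors, push that bound through the min-max formula to get $\lambda_k(B) \leq \lambda_k(A) + \|E\|$, and symmetrize. The one point worth flagging is that the ordering convention matters: the paper orders eigenvalues increasingly, $\lambda_1(A) \leq \cdots \leq \lambda_n(A)$, and your min-over-$k$-dimensional-subspaces, max-over-unit-vectors version of Courant--Fischer is the one matching that convention, so the indices line up as required. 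No gap; this is the intended argument.
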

\begin{prop}\label{8.7}(\cite{R97}, Theorem VII.2.8) Let $A$ and $B$ be Hermitian matrices, and let $\delta=dist(\sigma(A),\sigma(B))$. Then the solution $X$ of the equation $AX-XB=Y$ satisfies the inequality
\begin{equation}
\|X\|_{\mathcal{L}^2(\mathcal{H})}  \leq \frac{C}{\delta} \|Y\|_{\mathcal{L}^2(\mathcal{H})}.
\end{equation}
\end{prop}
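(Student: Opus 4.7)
The plan is to diagonalize both Hermitian matrices and reduce the Sylvester equation $AX-XB=Y$ to a scalar entrywise problem controlled by the spectral gap.

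By the spectral theorem, write $A=U\Lambda U^{*}$ and $B=VMV^{*}$ with $U,V$ unitary and $\Lambda=\mathrm{diag}(\lambda_i)$, $M=\mathrm{diag}(\mu_j)$ carrying the eigenvalues of $A$ and $B$ respectively. Setting $\tilde X:=U^{*}XV$ and $\tilde Y:=U^{*}YV$, the equation becomes
$$\Lambda\tilde X-\tilde X M=\tilde Y,$$
which splits into the scalar system $(\lambda_i-\mu_j)\tilde X_{ij}=\tilde Y_{ij}$. By the definition of $\delta$, every denominator satisfies $|\lambda_i-\mu_j|\geq\delta$, so entrywise
$$|\tilde X_{ij}|\leq\delta^{-1}|\tilde Y_{ij}|.$$

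Next I would pass from the entrywise bound to the operator-norm bound stated in the proposition. Squaring and summing gives the Hilbert--Schmidt estimate $\|\tilde X\|_{HS}\leq\delta^{-1}\|\tilde Y\|_{HS}$, and unitary invariance of the Hilbert--Schmidt norm yields $\|X\|_{HS}\leq\delta^{-1}\|Y\|_{HS}$. In finite dimension, the comparison $\|\cdot\|_{\mathcal{L}^{2}(\mathcal H)}\leq\|\cdot\|_{HS}\leq\sqrt{n}\,\|\cdot\|_{\mathcal{L}^{2}(\mathcal H)}$, with $n=\dim\mathcal H$, then produces
$$\|X\|_{\mathcal{L}^{2}(\mathcal H)}\leq\sqrt{n}\,\delta^{-1}\|Y\|_{\mathcal{L}^{2}(\mathcal H)},$$
which is the desired inequality with $C=\sqrt{n}$. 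In the applications of this paper the relevant Hilbert spaces are the two-dimensional blocks $E_j$, so $n$ is uniformly bounded and $C$ is absolute.

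The main subtlety, and the only genuinely delicate step, is the transition from the Hilbert--Schmidt bound to the operator-norm bound with a constant $C$ independent of dimension. The sharp statement is the Bhatia--Davis--McIntosh inequality $\|X\|_{\mathrm{op}}\leq\frac{\pi}{2\delta}\|Y\|_{\mathrm{op}}$, whose proof relies on a contour-integral representation
$$X=\frac{1}{2\pi\mathrm{i}}\oint_{\Gamma}(zI-A)^{-1}Y(zI-B)^{-1}\,dz,$$
with $\Gamma$ separating $\sigma(A)$ from $\sigma(B)$, together with functional-calculus estimates for normal operators. For the finite-dimensional blocks used throughout our KAM scheme, however, the elementary diagonalization argument suffices and we need not invoke the optimal constant.
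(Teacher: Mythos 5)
Your proof is correct. The paper does not provide its own argument for this proposition---it simply cites Bhatia's \emph{Matrix Analysis}, Theorem VII.2.8---so the comparison is between your elementary proof and Bhatia's. Bhatia obtains a sharp, dimension-independent constant (in fact $\pi/2$, due to Bhatia--Davis--McIntosh) by representing the solution of the Sylvester equation as an integral of $e^{\mathrm{i}tA}\,Y\,e^{-\mathrm{i}tB}$ against a kernel whose Fourier transform inverts multiplication by $\lambda_i-\mu_j$ on $\{|s|\geq\delta\}$, and then estimating in operator norm directly; this works uniformly in dimension. Your route---diagonalize both matrices by unitaries, divide entrywise by the gaps $|\lambda_i-\mu_j|\geq\delta$, pass to the unitarily invariant Hilbert--Schmidt norm, and then compare with the operator norm---is more elementary but pays the factor $\sqrt{n}$ from $\|\cdot\|_{\mathrm{op}}\leq\|\cdot\|_{HS}\leq\sqrt{n}\,\|\cdot\|_{\mathrm{op}}$. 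You correctly identify this as the only nontrivial point, and you are right that it is harmless here: the proposition is invoked in the homological equation on the $2\times2$ blocks $\hat{\mathbf{G}}^{[i]}_{[j]}(\ell)$ mapping $E_j$ into $E_i$, so $n=2$ and $C=\sqrt{2}$ is an absolute constant. For the purposes of this paper the two arguments are interchangeable; Bhatia's is indispensable only when the block dimensions are unbounded.
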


\section*{Acknowledgements}
The authors are grateful to the referee for his/her valuable comments, which greatly improve
the original manuscript of this paper. The work is supported by the Natural Science Foundation of the Jiangsu Higher Education Institutions of China (Grant No.19KJB110025), School Foundation of Yangzhou University(Grant No.2019CXJ009) and by the National Natural Science Foundation of China (Grant No.12071254).

\bibliographystyle{abbrv} 

\begin{thebibliography}{10}
\bibitem{bal19}
P.~Baldi, M.~Berti, E.~Haus, R.~Montalto.
\newblock Time quasi-periodic gravity water waves in finite depth.
\newblock {\em Invent. Math.}, 214, 739-911, 2018.

\bibitem{Baldi2}
P.~Baldi, M.~Berti, E.~Haus, R.~Montalto.
\newblock  KAM for quasi-linear and fully nonlinear forced perturbations of Airy equation.
\newblock  {\em Mathematische. Annalen.}, 359, 471-536, 2014.


\bibitem{Bam01}
D.~Bambusi and S.~Graffi.
\newblock Time Quasi-periodic unbounded perturbations of Schr\"odinger operators and KAM methods.
\newblock {\em Comm. Math. Phys.}, 219: 465-480, 2001.
\bibitem{Bam18}
D.~Bambusi.
\newblock Reducibility of 1-d Schr\"odinger equation with time quasiperiodic unbounded perturbation,I.
\newblock{\em Trans. Amer. Math. Soc.}, 370: 1823-1865, 2018.

\bibitem{Bam171}
D.~Bambusi.
\newblock Reducibility of 1-d Schr\"odinger equation with time quasiperiodic unbounded perturbation, II.
\newblock{\em Comm. Math. Phys.}, 353: 353-378, 2017.

\bibitem{Bam19}
D.~Bambusi and R.~Montalto.
\newblock Reducibility of 1-d Schr\"odinger equation with time quasiperiodic unbounded perturbation, III.
\newblock{\em  J. Math. Phys.}, 59, 2018.

\bibitem{Bam018}
D. Bambusi, B. Gr\'ebert, A. Maspero, and D. Robert.
\newblock Reducibility of the quantum harmonic oscillator in d-dimensions with polynomial time-dependent perturbation.
\newblock {\em Anal. PDE.,} 11: 775-799, 2018.


\bibitem{Bam019}
D.~Bambusi, B.~Langella  and  R.~Montalto.
\newblock Reducibility of non-resonant transport equation on $\T^d$ with unbound perturbation.
\newblock { \em Ann.Hernri. poincar\'e.}, 20: 1893-1929, 2019.


\bibitem{Bam17}
D.~Bambusi, B.~Gr\'ebert, A.~Maspero and D.~Robert.
\newblock Growth of Sobolev norms for abstract linear Schr\"odinger Equations.
\newblock {\em to appear in  J. Eur. Math. Soc.}, arXiv:1706.09708v2.


\bibitem{berti}
M.~Berti, L.~Corsi, M.~Procesi.
\newblock  An abstract Nash Moser theorem and quasi-periodic solutions for NLW and NLS on compact Lie groups and Homogeneous manifolds.
\newblock {\em Comm. Math. Phys.}, 334: 1413-1454, 2015.

\bibitem{berti2}
M.~Berti, R.~Montalto.
\newblock Quasi-Periodic Standing Wave Solutions of Gravity-Capillary Water Waves.
\newblock {\em Mem. Amer. Math. Soc.}, 263 : 2020.

\bibitem{R97}
R.~Bhatia.
\newblock Matrix Analysis.
\newblock{\em Springer-Verlag New York}, 1997.

\bibitem{CdV79}
Y.~Colin de Verdi\'ere.
\newblock Sur le spectre des op\'erateurs elliptiques \`a bicaract\'eristiques toutes p\'eriodiques.
\newblock{\em Comment. Math. Helv.}, 54: 508-522, 1979.

\bibitem{Car1990}
R.~Carmona and W.~C.~Masters.
\newblock Relativistic Schr\"odinger operator: Asymptotic Behavior of Eigenfunction.
\newblock {\em J. Funct. Anal.}, 91: 117-142, 1990.



\bibitem{L.H 09}
L.~Eliasson and S.~Kuksin.
\newblock  On reducibility of Schr\"odinger equation with quasi periodic in time potential.
\newblock {\em Comm. Math. Phys.}, 286: 125-135, 2009.




\bibitem{L.M2019}
L.~Franzoi and A.~Maspero.
\newblock Reducibility for a fast-driven linear Klein-Gordon equation.

\newblock{\em Annali. di. Matematica. Pura. ed. Applicata.}, 198: 1407-1439, 2019.



\bibitem{R.F2018}
R.~Feola, F.~Giuliani, R.~Montalto and M.~Procesi.
\newblock Reducibility of first order linear operators on tori via Moser's theorem.
\newblock  {\em J. Funct. Anal.}, 276: 932-970, 2019.

\bibitem{F.G2019}
R.~Feola, B.~Gr\'ebert.
\newblock Reducibility of Schr\"odinger equation on sphere.
\newblock {\em to appear in Int. Math. Res. Notices.}, arXiv:1905.11964.
\bibitem{F.G20192}
R.~Feola, F.~Giuliani, M.~Procesi.
\newblock Reducibility for a class of weakly dispersive linear operators arising from the Degasperis Procesi equation.
\newblock {\em Dyn. Partial Differ. Equ.}, 16: 25-94, 2019.

\bibitem{F.G2020}
R.~Feola, B.~Gr\'ebert, T.~Nguyen.
\newblock Reducibility of Schr\"odinger equation on a Zoll manifold with unbounded potential.
\newblock Preprint, arXiv:1910.10657.
\bibitem{F.G20201}
R. Feola, F. Giuliani, and M. Procesi.
\newblock Reducible KAM tori for the Degasperis-Procesi equation.
\newblock{\em  Commun. Math. Phys.}, 3: 1681-1759, 2020.

\bibitem{B12}
B.~Gr\'ebert and  L.~Thomann.
\newblock KAM for the Quantum Harmonic Oscillator.
\newblock {\em Commun. Math. Phys.}, 307: 383-427, 2011.

\bibitem{B24}
B.~Gr\'ebert and E.~Paturel.
\newblock On reducibility of Quantum Harmonic oscillator on $\mathbb{R}^d$ with quasiperiodic in time potential.
\newblock {\em  Ann. Fac. Sci. Toulouse.}, 6:977-1014, 2019.

\bibitem{Ku93}
S.~Kuksin.
\newblock Nearly integrable infinite-dimensional Hamiltonian systems,volume 1556 of Lecture Notes in Mathematics.
\newblock {\em Springer-Verlag. Berlin.}, 1993.

\bibitem{Ku97}
S.~Kuksin.
\newblock  On small-denominators equations with large variable coefficients.
\newblock{\em Z. Angew. Math. Phys.}, 48: 262-271, 1997.

\bibitem{Liang19}
Z.~Wang and Z.~Liang.
\newblock Reducibility of quantum harmonic oscillator on Rd with differential and quasi-periodic in time potential.
\newblock{\em J. Differential. Equations.}, 267: 3355-3395, 2019.

\bibitem{Liu09}
J.~Liu and X.~Yuan.
\newblock Spectrum for quantum Duffing oscillator and small-divisor equation with large-variable coefficient.
\newblock {\em Comm. Pure. Appl. Math.}, 63: 1145-1172, 2010.

\bibitem{Mas18}
A.~Maspero.
\newblock Lower bounds on the growth of Sobolev norms in some linear time dependent Schr\"odinger equations.
\newblock {\em Math.Res.Lett.}, 26: 1197-1215, 2019.

\bibitem{Mon2018}
R.~Montalto.
\newblock Quasi-periodic solutions of forced Kirchhoff equation.
\newblock{\em NoDEA Nonlinear. Differential. Equations. Appl.}, 24: 2017.


\bibitem{Mo2019}
R.~Montalto.
\newblock A reducibility result for a class of linear  wave equation  on $\T^d$.
\newblock {\em Int. Math. Res. Notices.}, 6: 1788-1862, 2019.




\bibitem{Mo20191}
R.~Montalto.
\newblock Growth of Sobolev norms for time dependent periodic Schr\"odinger equations with sublinear dispersion.
\newblock{\em J. Differential. Equations.}, 266: 4953-4996, 2019.

\bibitem{s19}
Y.~Shi.
\newblock Analytic solutions of nonlinear elliptic equations on rectangular tori.
\newblock{\em J. Differential. Equations.}, 267: 5576-5600, 2019.

\bibitem{S19}
Y.~Sun, J.~Li and  B.~Xie.
\newblock Reducibility for wave equations of finitely smooth potential with periodic boundary conditions.
\newblock{\em J. Differential. Equations.}, 266: 2762-2804, 2019.

\bibitem{T91}
M.~Taylor.
\newblock Pseudodifferential operators and nonlinear PDE, volume 100 of Progress in Mathematics.
\newblock Birkh\"auser Boston, Inc.,Boston, MA, 1991.
\bibitem{Wan08}
W.~Wang.
\newblock Pure point spectrum of the Floquet Hamiltonian for the quantum harmonic oscillator under time quasi-periodic perturbations.
\newblock {\em Commun. Math. Phys.}, 277: 459-496,2008.

\bibitem{W16}
Z.~Wang and Z.~Liang.
\newblock Reducibility of 1D quantum harmonic oscillator perturbed by a quasiperiodic potential with logarithmic decay.
 \newblock {\em Nonlinearity}, 30: 1405-1448, 2017.

\bibitem{Y2006}
X.~Yuan.
\newblock A varint of KAM theorem with applications to nonlinear wave equations of higher dimension.
\newblock Preprint, $https://web.ma.utexas.edu/mp_arc/c/06/06-44.pdf$, 2006.
\end{thebibliography}

\end{document}